\def\subsubsection{\@startsection{subsubsection}{3}%
\z@{.5\linespacing\@plus.7\linespacing}{-.5em}%
{\normalfont\bfseries}}
\numberwithin{equation}{section}
\theoremstyle{plain}
\newtheorem{theorem}{Theorem}[section]
\newtheorem{lemma}{Lemma}[section]
\newtheorem{prop}{Proposition}[section]
 \newtheorem{corollary}{Corollary}[section]
\newtheorem{assumption}{H.\!\!}
\theoremstyle{definition}
\newtheorem{definition}[theorem]{Definition}
\theoremstyle{remark}
\newtheorem{remark}[theorem]{Remark}
\newtheorem{example}{Example}[section]
\numberwithin{table}{section}
\newcommand{\diag}{\operatorname{diag}}
\newcommand{\E}{\mathbb{E}}
\newcommand{\R}{\mathbb{R}}
\newcommand{\Tr}{\operatorname{tr}}
\def\cA{\mathcal{A}}
\def\cF{\mathcal{F}}
\def\cG{\mathcal{G}}
\def\cH{\mathcal{H}}
\def\cO{\mathcal{O}}
\def\cP{\mathcal{P}}
\def\cS{\mathcal{S}}
\def\d{{\mathrm{d}}}
\def\bu{{\textbf{\textit{u}}}}
\def\bz{{\textbf{\textit{z}}}}
\def\bX{{\textbf{X}}}
\def\bY{{\textbf{Y}}}
\def\var{{\text{Var}}}
\def\sE{{\mathbb{E}}}
\def\sF{{\mathbb{F}}}
\def\sN{{\mathbb{N}}}
\def\sP{\mathbb{P}}
\def\sR{{\mathbb R}}
\def\sS{{\mathbb{S}}}
\def\sV{{\mathbb{V}}}
\DeclareMathAlphabet{\mymathbb}{U}{bbold}{m}{n} 
\def \Ck2{\begin{pmatrix} C_k & 0 \\ 0 & C_k \end{pmatrix}}
\def\B2{{\begin{pmatrix}
       \Tilde{B} & 0 \\   0 &  \Tilde{B}  \end{pmatrix}}}
\def\A2{{\begin{pmatrix}
       A & 0 \\   0 &  A  \end{pmatrix}}}
\def\ra{{{\rightarrow}}}
\colorlet{blue}{black}
\title[$\alpha$-potential distributed games with jumps]{Distributed games with jumps: an $\alpha$-potential game Approach}
\author{Xin Guo$^\dagger$}
\email{xinguo@berkeley.edu}
\author{Xinyu Li$^{\S }$}
\email{xinyu.li@maths.ox.ac.uk}
\author{Yufei Zhang$^\ddagger$}
\email{yufei.zhang@imperial.ac.uk}
\address{$^\dagger$ {Department of Industrial Engineering and Operations Research, UC Berkeley,  CA, USA}}
\address{$^\S$ {Mathematical Institute, University of Oxford, Oxford, UK} }
\address{$^\ddagger${Department of Mathematics, Imperial College London, London, UK}}
\subjclass[2020]{91A06,  
 91A15,  
 91A14,  
68T07
}
\keywords{Potential game, 
  Nash equilibrium, 
distributed game, 
policy gradient algorithm, 
game-theoretic motion planning, mean-variance portfolio game}
\begin{document}
\maketitle 

\begin{abstract}
Motivated by game-theoretic  models of crowd motion dynamics, this paper analyzes a broad class of distributed games with jump diffusions within the recently developed $\alpha$-potential game  framework. We demonstrate that analyzing the $\alpha$-Nash equilibria  reduces to solving a finite-dimensional control problem.
Beyond the  viscosity and verification characterizations for the general games, we examine explicitly and in detail how spatial population distributions and interaction rules influence the structure of $\alpha$-Nash equilibria in these distributed settings. 

For crowd motion network games, 
we show  that $\alpha = 0$ for all symmetric interaction networks, and {\color{blue}for asymmetric networks, we quantify the precise polynomial and logarithmic decays of $\alpha$ in terms of the number of players, the degree of the network, and the decay rate of interaction asymmetry.}
{\color{blue}
We also exploit the $\alpha$-potential game framework to analyze an $N$-player portfolio selection game under a mean–variance criterion.  We show that this   portfolio game constitutes a potential game  and explicitly construct its Nash equilibrium.
Our analysis allows for heterogeneous preference parameters, going beyond the mean-field interactions considered in the existing game literature.
 
}

Our theoretical results are supported by numerical implementations using policy gradient-based algorithms,  demonstrating the computational advantages of the $\alpha$-potential game framework in computing Nash equilibria for general dynamic games.

\end{abstract}

\section{Introduction}
 \label{sec:intro}

\subsection*{Motivating example and distributed games.}
Consider the following motion planning game  \cite{lachapelle2011mean, 
aurell2018mean, 
santambrogio2021cucker,
carmona2023synchronization}, where a group of $N$ players each controls or  chooses their 
preferred route  to reach their respective destinations; their paths are impacted by the spatial distribution of  the population and their interactions.  In this game, each player aims to find the optimal path according  to her cost functional consisting of terminal costs and the running costs 
which depend on the controls and the path to her  destination. This game can be modeled   as the following stochastic differential game. 
For each player $i \in [N]\coloneqq \{1, 2, \cdots, N\}$, 
given her control process $u_i$, 
her state process $X^{u_i}_i$,
representing the player's position,
is governed by  the following controlled jump-diffusion process:
\begin{equation}
\label{eq:dynamics_ex_intro}
\begin{aligned}
      \d X_{i,t}
      &= b_i(t) u_{i,t}  \d t + \sigma_i(t) \d W_t + 
      \sum_{j=1}^m\int_{\sR_0^p}\gamma_{ij}(t, z) \widetilde {\eta}_{j}(\d t, \d z),
      \quad 
      t\in (0,T];\quad X_{i,0} = x_i\in \sR^d,
\end{aligned}
\end{equation}
where  $ b_i : [0,T] \ra \sR^{d\times k}$, $\sigma_i : [0,T] \ra \sR^{d  \times n}$ and $\gamma_{ij} : [0,T] \times \sR^p \ra \sR^{d }$
are   measurable  functions such that 
\eqref{eq:dynamics_ex_intro}
admits a unique  strong   solution  on an appropriate probability space which supports the $n$-dimensional Brownian motion $W$ and the jump processes $(\tilde \eta_j)_{j=1}^m$.
Given a joint control profile  $\bu =(u_i)_{i\in [N]}$ from an admissible set,  
each player $i$ aims to minimize  over her admissible controls  an objective function of the form
\begin{align}
\label{eq: cost_ex_intro}
J_i(\bu)\coloneqq\sE\left[\int_0^T \left( 
 \ell_i( u_{i,t}) + \frac{1}{N -1 } \sum_{j=1, j\ne i}^N q_{ij}  K( X^{u_i}_{i,t} -X^{u_j}_{j,t}) \right)\,\d t+ g_i (X^{u_i}_{i,T})\right],
\end{align}
where $\ell_i$ is the cost of control and $g_i$ is the terminal cost. The kernel function  $K$  can be specified to model self-organizing behavior such as  flocking, or aversion behavior, with adjustment of the interaction intensity by $q_{ij} \ge 0$.

The above crowd motion  game 
is a special class of  stochastic differential games which we name as  distributed games; See Section \ref{sec:setup} for the detailed formulation of these games. The term ``distributed'' refers to the characteristics of the game where each player's dynamics evolve according to a controlled stochastic process that depends only on her own control, while her objective function may depend on the joint state and control profiles of all players  (see also Remark \ref{rmk:distinction}).
 Such a framework has been used  in a variety of applications where agents interact through their objectives but evolve independently in state, including distributed control of multi-agent systems and trajectory planning \cite{ kavuncu2021potential, sun2023distributed, sun2024imagined, aghajani2015formation}, transportation and routing \cite{blum2010routing, colombo2012efficient, krichene2015convergence},  as well as in energy markets and smart grids \cite{ma2011decentralized, srikantha2016resilient, ramchurn2011agent,paccagnan2016aggregative,tushar2020peer,narasimha2022multi}.
{\color{blue}
The jump processes 
$(\tilde \eta_j)_{j=1}^m$ in the state dynamics \eqref{eq:dynamics_ex_intro}  capture sudden stochastic shocks and can be used to model neuronal spiking activity in neuroscience \cite{musila1991generalized,sirovich2014cooperative}, as well as stochastic demand shocks that introduce discontinuities in production and inventory management \cite{liu2018optimal}.
 
}

In general, deriving Nash equilibria for this type of game is analytically challenging, as the interaction kernel $K$ in \eqref{eq: cost_ex_intro} is typically non-convex, which precludes the use of standard tools such as the stochastic  maximum principle. An exception arises in the special case of mean field games, under the assumptions that players are homogeneous and interact weakly through empirical measures, and the number of players $N \to \infty$, 
see for instance, 
\cite{lachapelle2011mean, 
aurell2018mean, 
santambrogio2021cucker,
carmona2023synchronization}.

Meanwhile, the recently introduced $\alpha$-potential game framework has shown significant promise for analyzing and solving general dynamic games, both from theoretical and algorithmic perspectives \cite{guo2023markov, guo2025towards, guo2024alpha,  guo2025bsde, kalaria2024alpha, maheshwari2024convergence}. 
The $\alpha$-potential game framework directly addresses finite-player games, unlike the conventional mean-field game approach, which relies on weak interactions among players or considers the limit as the number of players $N \to \infty$.
Specifically, \cite{guo2023markov,guo2024alpha} demonstrate that computing a Nash equilibrium can be reformulated as an optimization problem involving a single $\alpha$-potential function and the analysis of the parameter $\alpha$. 
Furthermore, \cite{guo2024alpha} constructs a specific $\alpha$-potential function for general stochastic differential games and optimizes it by reformulating the problem as a conditional McKean–Vlasov control problem. {\color{blue} As a result, \cite{guo2024alpha} obtains an $\alpha$-Nash equilibrium via a Hamilton–Jacobi–Bellman (HJB) equation on the infinite-dimensional space of probability measures. However, beyond the LQ setting, analyzing this infinite-dimensional control problem and computing the $\alpha$-Nash equilibrium becomes highly challenging.
}

\subsection*{Our approach and our work.}

In this paper, we develop the $\alpha$-potential game framework for distributed games with controlled jump-diffusion dynamics.

\begin{itemize}
 \item  Within this framework, 
we show   that the task of finding an $\alpha$-Nash equilibrium can be further reduced to solving a finite-dimensional control problem (Theorem~\ref{thm:potential_diff}),
{\color{blue} 
thereby avoiding the infinite-dimensional McKean–Vlasov control formulation given in \cite{guo2024alpha}. A key step is to obtain a separation principle for the controlled state  processes (Lemma \ref{lemma:state_decomposition}).
As a consequence, we obtain new Markovian characterizations of $\alpha$-Nash equilibria via a finite-dimensional HJB equation, utilizing both verification methods and viscosity solution theory
(Theorems \ref{thm:verification_full}
and \ref{thm:hjb_full_viscosity}).
}
 
\item

We   precisely  characterize  how the parameter $\alpha$ depends on the underlying game structure, {\color{blue} including the network governing player interactions, extending beyond the results in \cite{guo2024alpha}.} In particular,  for games of the form
\eqref{eq:dynamics_ex_intro}-\eqref{eq: cost_ex_intro}, we explicitly demonstrate how the resulting $\alpha$-Nash equilibria are shaped by
 the choice of the kernel function $K$  and the   intensity and asymmetry  of the interaction weights  $(q_{ij})_{i,j\in [N]}$ in the   payoffs  
(Theorem  \ref{thm:crowd_alpha_general} and Corollary \ref{cor:crowd_interaction_alpha}).
We prove that $\alpha = 0$ for all symmetric interaction networks, and {\color{blue}for asymmetric networks, we quantify the precise polynomial and logarithmic decays of $\alpha$ in terms of the number of players, the degree of the network, and the decay rate of interaction asymmetry.}

\item 
Leveraging the finite-dimensional control formulation of the $\alpha$-potential function, 
we  develop an efficient policy gradient algorithm 
 (Algorithm \ref{alg:PG})
to compute an $\alpha$-Nash equilibrium. Through numerical experiments on crowd motion games, we showcase distinct emergent trajectories in both flocking and aversion dynamics.

\item 
{\color{blue}
We exploit the $\alpha$-potential game framework to analyze an $N$-player portfolio selection game under a mean–variance criterion. Players trade in a financial market with multiple risky assets, whose prices are driven by Brownian motions and Poisson random measures. Each player aims to maximize their total wealth relative to a weighted average of the other players’ wealth while penalizing variance. We show that this   portfolio game constitutes a potential game  and explicitly construct the corresponding Nash equilibrium (Theorem \ref{thm:potential_maximizer}). Our analysis allows for  heterogeneous preference parameters, going beyond the mean-field interactions considered in the existing game literature.
 
}

\end{itemize}

\subsection*{Notation}

 Let  
$T > 0$. For each 
   measurable function $\phi:[0,T]\to \sR^n$,
we define its $L^2$-norm $\|\phi\|_{L^2}=\left(\int_0^T |\phi(s)|^2\d s\right )^{1/2}$ with $|\cdot|$ being the Euclidean norm, and 
for each 
$\phi:U\to \sR^{m\times n}$ defined on a set $U$,
we define its sup-norm
$\|\phi\|_{L^\infty}=\sup_{u\in U}\|\phi(u)\|_{\textrm{sp}}$, with $\|\cdot\|_{\textrm{sp}}$
being the spectral  norm of a matrix. 

For each filtered probability space 
$(\Omega, \cF, \sF, \sP)$
  and Euclidean space $(E, |\cdot |)$,  
 we denote by 
 $\cS^2( E)$    the space of $E$-valued $\sF$-progressively measurable processes $X:\Omega\times [0,T]\to E$  satisfying $\|X\|_{\cS^2(E)}=\sE[\sup_{s\in [0,T]}|X_s|^2]^{1/2}<\infty$,
 and 
 by   $\cH^2(E)$    the space of $E$-valued $\sF$-progressively measurable processes $X:\Omega\times [0,T]\to E$  satisfying $\|X\|_{\cH^2(E)}=\sE[\int_0^T|X_s|^2\d s]^{1/2}<\infty$.
 With a slight abuse of notation, for any $m,n\in \sN$,
 we identify the product spaces $ \cS^2(\sR^n)^m$ and  $\cH^2(\sR^n)^m$ with 
 $\cS^2(\sR^{mn})$ and 
  $\cH^2(\sR^{mn})$, respectively.

\section{Distributed Games and Their Nash Equilibria} \label{sec:setup}

This section introduces a class of stochastic differential games, referred to as \emph{distributed games},
in which each player's dynamics evolve according to a drift-controlled jump-diffusion process that depends only on their own control, while their objective function may depend on the joint state and control profiles of all players.  We next present preliminary results for applying the $\alpha$-potential game framework developed in \cite{guo2024alpha} to compute approximate Nash equilibria for such games.


\subsection{Mathematical Setup}
\label{sec:math_setup}
Let $T>0$ be {\color{blue} a} given terminal time, and 
    $N, d, n, m, p \in \mathbb{N}$.
    Let  
    $ (\Omega, \mathcal{F}, \mathbb{P}) $ be a complete probability space which supports the following three mutually independent processes: 
     a family of   square integrable $d$-dimensional random variables $(\xi_i)_{i=1}^N$, 
    an $n$-dimensional Brownian motion $  W =(W_i)_{i=1}^n$, and a family of 
 independent   Poisson random measures  
  $  \eta= (\eta_i)_{i=1}^m  $ on $[0,T]\times \mathbb{R}^p_0$, 
  where $\mathbb{R}^p_0\coloneqq \sR^p\setminus \{0\}  $ is equipped with its Borel $ \sigma $-algebra.
  The random variables $(\xi_i)_{i=1}^N$ represents the 
   initial conditions of the  system states,  and the processes $W$ and $\tilde \eta$ represent the underlying system  noises. We assume that 
  each  
  $\eta_i $ has a compensator $  \nu_i(\d z )\,\d t $,
  with     $ \nu_i $ being  a $ \sigma $-finite measure on $  \mathbb{R}^p_0 $ satisfying
$ 
\int_{\mathbb{R}^p_0} \min(1, |z |^2) \, \nu_i(\d z) < \infty,
$
and define  $ \widetilde{\eta}_i(\d t, \d z) = \eta_i(\d t, \d z) - \nu_i(\d z)\,\d t $ as  the compensated Poisson random measure of $\eta_i$.
Let   $ \mathbb{F} = (\mathcal{F}_t)_{t \in [0,T]} $ be the filtration generated by 
$(\xi_i)_{i=1}^N$, 
$ W $ and $ \eta $, augmented with the $ \mathbb{P} $-null sets.

We consider a stochastic differential game involving $N$ players,  each employing open-loop control strategies defined as follows.\footnotemark  
For each $i\in [N]\coloneqq \{1,\ldots, N\}$, 
let $A_i\subset \sR^k$ be   player $i$'s    action set, 
and 
let 
$\cA_i$ be  the set of  player $i$'s  admissible controls defined by 
\begin{equation}
\label{eq:admissible_control}
    \cA_i\coloneqq 
\{u:\Omega\times [0,T]\to A_i
\mid   
u \in \cH^2(\sR^k) 
\}.
\end{equation}
  Let $A=\prod_{i\in [N]}A_i$
be the set of  joint action profiles of all players and 
$\cA=\prod_{i\in [N]}\cA_i$ be the joint control profiles.
For each $i\in [N]$,
we denote by  
$\cA_{-i}
  \coloneqq 
  \prod_{j\in [N]\setminus\{i\}} 
 \cA_j$ the set of control profiles of all players except player $i$, and 
 by $\bu=(u_i)_{i\in [N]}$  and $\bu_{-i}=(u_j)_{j\in [N]\setminus\{i\}}$ a generic element
of $\cA$ and $\cA_{-i}$,  respectively. 

\footnotetext{
{\color{blue} Open-loop controls refer to strategies that are non-anticipative functions of the system noise and the initial states (see e.g., \cite[Chapter 2]{carmona2018probabilistic}).
We will prove in Section \ref{sec:optimize_potential_function} that the resulting Nash equilibria admit a feedback representation. 
The $\alpha$-potential game framework can also be applied to games with closed-loop controls, where strategies are induced by policies that depend directly on the current system state \cite{guo2023markov,di2025alpha,plank2026learning}.
In this setting, 
the characterization of $\alpha$ 
 additionally depends on the regularity and structural properties of the closed-loop policies, and is therefore typically more involved than in the open-loop case.
}
} 

Given the control sets $(\cA_i)_{i\in [N]}$, each player influences their evolution by controlling the drift of a jump-diffusion process.  
More precisely, 
for each $\bu=(u_i)_{i\in [N]}\in \cA$,
consider    
the following state  dynamics: for all 
   $i\in [N]$, 
\begin{equation}\label{eq:X_state}
\begin{aligned}
      \d X_{i,t}
      &= b_i(t) u_{i,t}  \d t + \sigma_i(t) \d W_t + 
      \sum_{j=1}^m\int_{\sR_0^p}\gamma_{ij}(t, z) \widetilde {\eta}_{j}(\d t, \d z),
      \quad 
      t\in (0,T];\quad X_{i,0} = \xi_i,
\end{aligned}
\end{equation}
where  $ b_i : [0,T] \ra \sR^{d\times k}$, $\sigma_i : [0,T] \ra \sR^{d  \times n}$ and $\gamma_{ij} : [0,T] \times \sR^p \ra \sR^{d }$
are   measurable  functions such that 
\eqref{eq:X_state}
admits a unique  strong   solution  $\bX^{\bu} =  (X_i^{u_i})_{i\in [N]} \in \cS^2(\sR^{dN})$; see   (H.\ref{assum:regularity_general}) for the precise conditions.
Player  $i$ determines their optimal strategy by minimizing the  following objective function 
$J_i:\cA\to \sR$:
\begin{equation}\label{eqn:objective_function}
\inf_{u_i\in \cA_i}
J_i(\boldsymbol{u}),
\quad 
\textnormal{with}
\quad J_i(\boldsymbol{u})\coloneqq \mathbb{E}\left[\int_0^T f_i\left(t, \mathbf{X}_t^{\boldsymbol{u}}, \boldsymbol{u}_t\right) \mathrm{d} t+g_i\left(\mathbf{X}_T^{\boldsymbol{u}}\right)\right],
 \end{equation}
 where  the running cost  $f_i:[0, T] \times \mathbb{R}^{d N } \times \mathbb{R}^{k N } \rightarrow \mathbb{R}$ and 
 the terminal cost $g_i: \mathbb{R}^{d N } \rightarrow \mathbb{R}$ are given measurable functions. 

We denote by $\mathcal{G}$ the game defined by \eqref{eq:X_state}–\eqref{eqn:objective_function}, and refer to it as a \emph{distributed game}, as each player’s state is governed solely by their own control. The game $\mathcal{G}$  includes as a special case  
 the game-theoretic models for crowd motion dynamics that 
 will be analyzed in detail in Section \ref{sec:crowd_motion}. 
 In these models, the state process represents each player’s position and/or velocity, and the cost function captures each player’s target region, energy expenditure for traveling, and preferred route, which depends on the spatial distribution of the population.
 See Section \ref{sec:crowd_motion} for more details. 

\begin{remark}\label{rmk:distinction}

Note the distinction between distributed games and distributed controls, the latter of which typically assumes that  all players’ states are independent and that each player's control depends only on their own state (see, e.g., \cite{jackson2023approximately}). In contrast, in distributed games players’ states and control processes can be correlated due to shared sources of randomness, such as correlated initial states and common components in the Brownian motions or the Poisson random measures.
 
\end{remark}

\begin{remark}

Note that \eqref{eq:X_state} can accommodate linear dependence on the state variable in the drift via a simple change of variables.
Indeed, suppose
that 
for each $\bu \in \cA$,
player $i$'s state dynamics 
$\Tilde{X}_{i}^{u_i}$ satisfies for all $t\in [0,T]$,
\begin{align}
\label{eq:state_linear}
\d \Tilde{X}_{i,t}  = \left( \Tilde{a}_i(t)\, \Tilde{X}_{i,t}  + \Tilde{b}_i(t)\, u_{i,t} \right) \d t + \Tilde{\sigma}_i(t)\, \d W_t  + 
      \sum_{j=1}^m\int_{\sR_0^p}
      \Tilde{\gamma}_{ij}(t, z) \widetilde {\eta}_{j}(\d t, \d z),
      \quad \Tilde{X}_{i,0}^{u_i}= \xi_i,
\end{align}
where 
$\Tilde{a}_i, \Tilde{b}_i, \Tilde{\sigma}_i $
and $\Tilde{\gamma}_{ij}$ are  given measurable functions.  Then by considering   
$$
X_{i,t}^{u_i} \coloneqq  A_i(t) \Tilde{X}_{i,t}^{u_i},
\quad t\in [0,T],
$$ 
with $\frac{\d }{\d t}A_i(t)=-A_i(t)\tilde a_i(t)$
and $A_i(0)=I_d$, 
the state dynamics
\eqref{eq:state_linear}
can be transformed into the simpler form given in \eqref{eq:X_state}, with the state coefficients and cost functions adjusted by certain deterministic factors.
A special case of \eqref{eq:state_linear} is the following controlled kinetic equation
(see e.g, \cite{nourian2011mean,santambrogio2021cucker}):
for all $t\in [0,T]$,
 \begin{equation*} 
  \left\{
   \begin{aligned}
\d x_{i,t} & = v_{i,t}  \d t, \quad x_{i,0}=x_i,
 \\ 
\d v_{i,t} &= u_{i,t} \d t
 + \Tilde{\sigma}_i(t)\, \d W_t  + 
      \sum_{j=1}^m\int_{\sR_0^p}
      \Tilde{\gamma}_{ij}(t, z) \widetilde {\eta}_{j}(\d t, \d z),
      \quad v_{i,0}=v_i,
\end{aligned}\right.
 \end{equation*}
  where  $x_{i,t}$ and  $v_{i,t}$  denote  player $i$'s position and   velocity  at time $t$, respectively.

\end{remark}

Throughout this paper,
we impose the following standing regularity condition  on the coefficients of \eqref{eq:X_state}-\eqref{eqn:objective_function}. 
\begin{assumption}
\label{assum:regularity_general}
    For all $i,j\in [N]$, $A_i\subset \sR^k$ is   convex and  $0\in A_i$, and 
    $b_i, \sigma_i, \gamma_{ij}, f_i$ and $g_i$ are measurable functions satisfying   the following conditions:
    
\begin{enumerate}[(1)]
\item 
\label{item:state_coefficient}
$b_i$ and $\sigma_i$ are square integrable, and $\sup_{(t,z)\in [0,T]\times \sR^p_0} {|\gamma_{ij}(t,z)|}/{\min(1,  |z|)}<\infty$.
\item 
\label{item:f_g_general}
For all  $t\in [0,T]$,
    $\sR^{dN}\times \sR^{kN}\ni (x,a)\mapsto ( f_i(t,x,a), g_i(x))\in \sR\times \sR$ is twice continuously differentiable, 
$[0,T]\ni t\mapsto \big( f_i(t,0,0),  \partial_{(x,a)}f_i (t,0,0) \big)\in \sR\times \sR^{(d+k)N}$ is bounded,
and 
the second-order derivatives
$\partial^2_{xx}f_i$,
$\partial^2_{xa}f_i$,
$\partial^2_{aa}f_i$,
and 
$\partial^2_{xx}g_i$
are bounded (uniformly in $(t,x,a)$).
\end{enumerate}
 
\end{assumption}

Under Assumption (H.\ref{assum:regularity_general}),
for each $\bu\in \cH^2(\sR^{kN})$, \eqref{eq:X_state} admits a unique strong solution $\bX^{\bu}  \in \cS^2(\sR^{dN})$  (see \cite[Theorem 3.1]{kunita2004stochastic}), and \eqref{eqn:objective_function} is well defined. 
For ease of exposition, we assume that the action set contains $0$, but similar analyses can be extended to a  non-empty convex action set (see e.g., \cite{guo2024alpha, guo2025towards}).

\subsection{NEs and $\alpha$-potential function}

We aim to characterize the rational behavior of the players in the distributed game $\mathcal{G}$.
To this end, we first recall the notion of an $\varepsilon$-Nash equilibrium, defined as a joint control profile in which no player can improve their performance by more than $\varepsilon$ through any unilateral deviation. The precise definition is given below.

\begin{definition}
\label{def:epsilon_NE}
     For any $\varepsilon\ge 0$, a control profile $\bar \bu = (\bar u_i )_{i\in [N]} \in \cA$ is an $\varepsilon$-Nash equilibrium of the game $\cG$ if
$ J_i\left( \bar \bu \right) \leq J_i\left(\left(u_i , \bar u_{-i}\right)\right) + \varepsilon$, for all  $i \in [N],  u_i  \in \cA_i $.
\end{definition}

To analyze and compute an approximate NE of the game $\mathcal{G}$,
we employ the $\alpha$-potential game framework introduced in \cite{guo2024alpha}. 
\begin{definition}
Consider the game $\mathcal{G}$ in \eqref{eq:X_state}-\eqref{eqn:objective_function}. We say $\cG$ is an $\alpha$-potential game 
for $\alpha\ge 0$
if there exists 
a function $\Phi: \cA  \to \R$ such that 
for all $ i \in [N]$, $u_i, u_i^{\prime} \in \cA_i$ and $u_{-i} \in \cA_{-i}$,
\begin{equation}
\label{eq:alpha_v_phi}
|J_i\left(\left(u_i^{\prime}, u_{-i}\right)\right)-J_i\left(\left(u_i, u_{-i}\right)\right) - \left( \Phi\left(\left(u_i^{\prime}, u_{-i}\right)\right)-\Phi\left(\left(u_i, u_{-i}\right)\right) \right) | \leq \alpha.    
\end{equation}
Such a  function $\Phi$ is   called an $\alpha$-potential function for $\cG$.
In the case where $\alpha=0$,
we simply call the game $\cG$ a potential game and $\Phi$ a potential function for $\cG$.
\end{definition}
 The main advantage of this framework is that, once such an $\alpha$-potential function $\Phi$ is constructed, 
 finding approximate NEs reduces to solving a single optimization problem: minimizing $\Phi$ over $\cA$. This connection is made precise in the following lemma.

\begin{lemma}
[{\cite[Proposition 2.1]{guo2024alpha}}]

\label{lemma:a_potential_game}

Let    $\Phi: \cA  \to \sR$  be an $\alpha$-potential function of the game $\mathcal{G}$. For each    $\varepsilon\ge 0$, if  $\bar \bu  \in \cA $ satisfies   $\Phi(\bar \bu )\le \inf_{\bu \in \cA }\Phi(\bu)+\varepsilon$, then $\bar \bu  $ is an $( \alpha+\varepsilon) $-NE of the game $\mathcal{G}$.
 
\end{lemma}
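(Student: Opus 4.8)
The plan is to chain the two defining inequalities—the $\alpha$-potential property \eqref{eq:alpha_v_phi} and the near-optimality of $\bar\bu$ for $\Phi$—directly, with no analytic input beyond the product structure of $\cA$. First I would fix an arbitrary player $i\in[N]$ and an arbitrary unilateral deviation $u_i\in\cA_i$, and note that $(u_i,\bar\bu_{-i})\in\cA$ since $\cA=\prod_{j\in[N]}\cA_j$; this is what makes all the quantities below well defined.

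Next I would apply the $\alpha$-potential inequality \eqref{eq:alpha_v_phi} to the pair $\bar u_i, u_i$ with the fixed opponents' profile $\bar\bu_{-i}$, which gives
\begin{equation*}
J_i(\bar\bu)-J_i\big((u_i,\bar\bu_{-i})\big)\le \Phi(\bar\bu)-\Phi\big((u_i,\bar\bu_{-i})\big)+\alpha .
\end{equation*}
Then I would invoke the hypothesis $\Phi(\bar\bu)\le \inf_{\bu\in\cA}\Phi(\bu)+\varepsilon$; since $(u_i,\bar\bu_{-i})\in\cA$, the infimum is bounded above by $\Phi\big((u_i,\bar\bu_{-i})\big)$, so $\Phi(\bar\bu)-\Phi\big((u_i,\bar\bu_{-i})\big)\le\varepsilon$. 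Substituting this bound into the previous display yields $J_i(\bar\bu)\le J_i\big((u_i,\bar\bu_{-i})\big)+(\alpha+\varepsilon)$.

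Finally, since $i\in[N]$ and $u_i\in\cA_i$ were arbitrary, this is exactly the defining condition of an $(\alpha+\varepsilon)$-Nash equilibrium in Definition~\ref{def:epsilon_NE}, completing the proof. There is no real obstacle here: the statement is a one-line consequence of unwinding the two definitions, and the only point requiring (trivial) care is that the deviated profile $(u_i,\bar\bu_{-i})$ lies in $\cA$ so that it is an admissible competitor both in the potential-function infimum and in \eqref{eq:alpha_v_phi}.
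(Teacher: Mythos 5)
Your proof is correct: chaining the $\alpha$-potential inequality \eqref{eq:alpha_v_phi} (applied to the deviation $(u_i,\bar\bu_{-i})$) with the $\varepsilon$-near-minimality of $\Phi$ at $\bar\bu$ is exactly the standard argument, and the paper itself offers no separate proof, simply citing \cite[Proposition 2.1]{guo2024alpha}, whose proof proceeds in the same way.
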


 As shown in \cite{guo2024alpha}, one can construct an $\alpha$-potential function for a stochastic differential game using  the linear derivatives of each player’s objective function.
For each $i,j\in [N]$,
we say 
$f:\cA\to \sR$ has a linear derivative in  $\cA_j$
if there exists 
a function  
$\frac{\delta f}{\delta u_j}:\cH^2(\sR^{kN})\times  \cH^2(\sR^k)\to \sR$ such that  for all $\bu \in \cA$, 
$\frac{\delta f}{\delta u_j} (\bu ; \cdot)$ is linear    and 
\begin{equation*}
\lim _{\varepsilon \searrow 0} \frac{f\left(\left(u_j+\varepsilon\left(u_j^{\prime}-u_j\right), u_{-j}\right)\right)-f(\bu)}{\varepsilon}=\frac{\delta f}{\delta u_j}\left(\bu ; u_j^{\prime}-u_j\right), \quad \forall u_j^{\prime} \in \cA_j.
\end{equation*}  
 Similarly, we say 
 $f$ has a second-order linear derivative in $\cA_i\times \cA_j$ 
if $f$ has a linear derivative $ \frac{\delta f}{\delta u_i }$ in $\cA_i$, and 
 there exists 
  a function  
$\frac{\delta^2 f}{\delta u_i \delta u_j}:\cH^2(\sR^{kN})\times  \cH^2(\sR^k) \times  \cH^2(\sR^k) \to \sR$ such that 
 for all $\bu \in \cA$, 
$\frac{\delta^2 f}{\delta u_i \delta u_j}(\bu; \cdot, \cdot)$ is bilinear  and for all $u'_i\in \cH^2(\sR^k)$,
$\frac{\delta^2 f}{\delta u_i \delta u_j}(\bu; u'_i, \cdot)$
is the linear 
derivative of 
$\bu\mapsto \frac{\delta f}{\delta u_i }(\bu; u'_i)$ in $\cA_j$.

Using the notion of linear derivatives, the following theorem  constructs an $\alpha$-potential function
for the game $\cG$ and {\color{blue}quantifies} the associated $\alpha$.

\begin{prop}
\label{prop:general_potential}
Suppose that for all $i, j \in [N]$, $J_i$ has a   linear derivative $\frac{\delta J_i}{\delta u_i}$   in $\cA_i$, and 
a second-order linear derivative $\frac{\delta^2 J_i}{\delta u_i\delta u_j}$ in $\cA_i\times \cA_j$. Assume further that 
for all $u^{\prime}_i\in \cA_i$ and $u^{\prime \prime}_j\in \cA_j$,
$\cA\ni \bu\mapsto \frac{\delta^2 J_i}{\delta u_i\delta u_j}(\bu; u^{\prime}_i, u^{\prime \prime}_j)\in \sR$ is continuous. 
  Define
    $\Phi:\cA  \to \sR$ by 
    \begin{equation}
    \label{eq:potential_general}
        \Phi(\bu)=\int_0^1 \sum_{j=1}^N \frac{\delta J_j}{\delta u_j}\left(r \bu ; u_j\right) \mathrm{d} r.
    \end{equation}
    Then $\Phi$ is an $\alpha$-potential function of the game $\mathcal{G}$   with 
    \begin{equation}
    \label{eq:alpha_general}
         \alpha \le  \sup_{i \in [N],  u'_i\in \cA_i, {\color{blue} u_j''}\in \cA_j, \bu   \in \cA}
 \sum_{j=1}^N  \left|    \frac{\delta^2 J_i}{\delta u_i \delta u_j} \left(\bu ; u_i^{\prime} , u''_j  \right) 
-\frac{\delta^2 J_j}{\delta u_j \delta u_i}\left(\bu ; u''_j , u_i^{\prime} \right)   
 \right|.
    \end{equation}
\end{prop}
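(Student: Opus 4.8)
The plan is to verify the two defining properties of an $\alpha$-potential function for $\Phi$ given by \eqref{eq:potential_general}, namely that the potential-difference approximates the payoff-difference up to the claimed $\alpha$. The starting point is the fundamental theorem of calculus along the segment $r\mapsto r\bu$: for fixed $u_{-i}$ and two controls $u_i,u_i'$, writing $\bu^s=(u_i+s(u_i'-u_i),u_{-i})$, one has
\begin{equation*}
J_i\big((u_i',u_{-i})\big)-J_i\big((u_i,u_{-i})\big)=\int_0^1 \frac{\delta J_i}{\delta u_i}\big(\bu^s; u_i'-u_i\big)\,\mathrm{d}s,
\end{equation*}
using the existence of the linear derivative in $\cA_i$ and the continuity hypothesis to justify differentiating under the expectation / passing to the integral form. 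So the payoff increment has a clean line-integral representation involving only the diagonal derivative $\frac{\delta J_i}{\delta u_i}$.

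Next I would compute the corresponding increment of $\Phi$. Since $\Phi(\bu)=\int_0^1\sum_{j}\frac{\delta J_j}{\delta u_j}(r\bu;u_j)\,\mathrm{d}r$, the map $u_i\mapsto \Phi(u_i,u_{-i})$ is itself differentiable in $\cA_i$, and its linear derivative in direction $w$ can be obtained by differentiating under the $\int_0^1 \mathrm{d}r$ sign. Two kinds of terms appear: the $j=i$ term, where differentiating $\frac{\delta J_i}{\delta u_i}(r\bu;u_i)$ in $u_i$ produces, by the product/chain rule, $\frac{\delta J_i}{\delta u_i}(r\bu;w)+r\,\frac{\delta^2 J_i}{\delta u_i\delta u_i}(r\bu;u_i,w)$; and the $j\neq i$ terms, where differentiating $\frac{\delta J_j}{\delta u_j}(r\bu;u_j)$ in the $i$-th coordinate gives $r\,\frac{\delta^2 J_j}{\delta u_j\delta u_i}(r\bu;u_j,w)$. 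Collecting these and rescaling the inner $r$-integral (an integration-by-parts identity in $r$, using that $\frac{\mathrm{d}}{\mathrm{d}r}\big[\frac{\delta J_i}{\delta u_i}(r\bu;u_i)\big]=\frac{\delta J_i}{\delta u_i}(\bu;\dots)$-type relations hold because the derivative is linear in its direction argument so $\frac{\delta J_i}{\delta u_i}(r\bu;u_i)$ evaluated along the ray can be re-expressed), one should find that the $\frac{\delta J_i}{\delta u_i}$-part of $\frac{\delta\Phi}{\delta u_i}$ exactly reproduces the integrand in the line-integral representation of $J_i((u_i',u_{-i}))-J_i((u_i,u_{-i}))$, while the leftover consists precisely of the cross second-derivative terms $\sum_j \big(\frac{\delta^2 J_i}{\delta u_i\delta u_j}-\frac{\delta^2 J_j}{\delta u_j\delta u_i}\big)$ integrated against the segment. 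Bounding the leftover by the supremum in \eqref{eq:alpha_general}, with the factor $\tfrac12$ coming from $\int_0^1\int_0^1$-type double integration over the two simplex parameters, yields the claim.

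Concretely, the cleanest route is: (i) fix $i$, $u_{-i}$, $u_i$, $u_i'$ and set $v=u_i'-u_i$; (ii) write both $J_i((u_i',u_{-i}))-J_i((u_i,u_{-i}))$ and $\Phi((u_i',u_{-i}))-\Phi((u_i,u_{-i}))$ as double integrals $\int_0^1\int_0^1(\cdots)\,\mathrm{d}r\,\mathrm{d}s$ over the scaling parameter $r$ of $\Phi$ and the deviation parameter $s$ along the segment, using FTC twice and the symmetry of mixed derivatives within a single player (so $\frac{\delta^2 J_i}{\delta u_i\delta u_i}$ terms cancel); (iii) observe that after this symmetrization the difference of the two double integrals is exactly $\int_0^1\int_0^1 \sum_{j}\big(\text{antisymmetric part}\big)\,\mathrm{d}r\,\mathrm{d}s$, whose absolute value is at most the stated sup times $\int_0^1\int_0^1 1\,\mathrm{d}r\,\mathrm{d}s \cdot (\text{a } \tfrac12)$; (iv) conclude via Definition of $\alpha$-potential game. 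The continuity assumption on $\bu\mapsto \frac{\delta^2 J_i}{\delta u_i\delta u_j}(\bu;u_i',u_j'')$ is used to guarantee the integrands are measurable/bounded on the compact parameter square and to justify the differentiation-under-the-integral steps and Fubini.

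The main obstacle I anticipate is the bookkeeping in step (ii)–(iii): carefully tracking how the extra factor of $r$ (from differentiating $r\bu$ in the $\Phi$ formula) interacts with the $s$-integration, and making sure the ``diagonal'' contribution of $\frac{\delta\Phi}{\delta u_i}$ telescopes exactly onto $\frac{\delta J_i}{\delta u_i}$ so that only the antisymmetric cross-terms survive — this is the place where the specific form \eqref{eq:potential_general} (integration of the summed diagonal derivatives along the ray from $0$) is essential, and where an off-by-a-factor error in the $r$-weight would change the constant in front of $\alpha$. Everything else — existence of the derivatives of $\Phi$, exchanging limit and integral, the final triangle-inequality bound — is routine given the hypotheses. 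This is essentially the finite-player analogue of the potential-function construction in \cite{guo2024alpha}, so I would also cross-check the constant $\tfrac12$ against the double-integral normalization there.
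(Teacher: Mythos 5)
Your overall route is sound in skeleton and is different in character from the paper's: the paper proves this proposition by a one-line reduction to \cite[Theorem 2.5]{guo2024alpha} with the anchor $\bz=0$, whereas you reconstruct the line-integral argument directly. Carried out, your computation gives, with $v=u_i'-u_i$, $\bu^s=(u_i+sv,u_{-i})$, $u^s_j=u_j$ for $j\ne i$ and $u^s_i=u_i+sv$,
\begin{equation*}
J_i((u_i',u_{-i}))-J_i(\bu)-\bigl(\Phi((u_i',u_{-i}))-\Phi(\bu)\bigr)
=\int_0^1\!\!\int_0^1 r\sum_{j=1}^N\Bigl[\tfrac{\delta^2 J_i}{\delta u_i\delta u_j}\bigl(r\bu^s;v,u^s_j\bigr)-\tfrac{\delta^2 J_j}{\delta u_j\delta u_i}\bigl(r\bu^s;u^s_j,v\bigr)\Bigr]\d r\,\d s,
\end{equation*}
obtained by differentiating $\Phi(\bu^s)$ under the $r$-integral and using $\frac{\delta J_i}{\delta u_i}(\bu^s;v)=\int_0^1\frac{\d}{\d r}\bigl[r\,\frac{\delta J_i}{\delta u_i}(r\bu^s;v)\bigr]\d r$; the continuity hypothesis indeed justifies these chain-rule/FTC steps. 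One small correction: the $j=i$ term does not ``cancel by symmetry of mixed derivatives within a single player'' as you assert in step (ii) --- symmetry of $\frac{\delta^2 J_i}{\delta u_i\delta u_i}$ is not assumed --- but this is harmless, since that term is exactly the $j=i$ summand allowed in \eqref{eq:alpha_general}.

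The genuine gap is precisely at the step you flagged: the constant. The double integral contributes $\int_0^1\int_0^1 r\,\d r\,\d s=\tfrac12$, but the integrand is evaluated in the direction $v=u_i'-u_i$, which in general is \emph{not} an element of $\cA_i$ (it is a difference of admissible controls; $0\in A_i$ and convexity do not make $\cA_i-\cA_i\subset\cA_i$). To compare with the supremum in \eqref{eq:alpha_general}, which ranges over $u_i'\in\cA_i$, $u_j''\in\cA_j$, you must split $v$ by bilinearity into two admissible directions, and this doubles the estimate: your argument as written yields $\alpha\le\sup(\cdots)$, not $\alpha\le\tfrac12\sup(\cdots)$, and routing the deviation through $(0,u_{-i})$ does not help (two legs, each costing $\tfrac12\sup$). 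The factor $\tfrac12$ cannot be recovered from the double integration alone: for instance, with $J_1(\bu)=c\,\E\int_0^T u_{1,t}u_{2,t}\,\d t$, $J_2\equiv 0$ and $A_1=A_2=[-1,1]$, one finds $\Phi(\bu)=\tfrac{c}{2}\E\int_0^T u_{1,t}u_{2,t}\,\d t$, and the deviation $u_1\equiv-1\to u_1'\equiv1$ with $u_2\equiv1$ produces a discrepancy equal to the \emph{full} supremum appearing in \eqref{eq:alpha_general}, not half of it. So the bound with constant $\tfrac12$ is only reachable if the directions in the supremum are read as differences $u_i'-u_i$ (which is effectively the normalization in \cite{guo2024alpha}); your proposal neither supplies the missing argument nor performs the cross-check against the cited normalization, and as it stands the quantitative claim --- which is the entire content of the proposition beyond the generic potential construction --- is not established.
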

{\color{blue}\begin{proof}
Proposition \ref{prop:general_potential} follows as a special case of \cite[Theorem 2.5]{guo2024alpha}, using the specific choice $\bz = 0$.
With  this choice, the bound in \eqref{eq:alpha_general} is tighter than the general upper bound on $\alpha$ provided in \cite[Equation 1.3]{guo2024alpha}, as it no longer involves  a multiplicative constant  $2$ used in \cite{guo2024alpha}.
\end{proof} }
\section{$\alpha$-Potential Function
for Distributed Games}\label{sec:bound_alpha_open}

This section presents more explicit expressions of the $\alpha$-potential function \eqref{eq:potential_general} and the corresponding $\alpha$
from Proposition \ref{prop:general_potential}, expressed in terms of the model coefficients.

The following lemma analytically characterizes the linear derivatives of all players' objective functions. 
An important  tool is the derivative of each player’s controlled state with respect to her own control, defined by \eqref{eq:Y_state}.

\begin{lemma}
\label{lemma:linear_derivative}
    Suppose   (H.\ref{assum:regularity_general}) holds. For all $i \in [N]$,
   $J_i$ has a linear derivative 
$\frac{\delta J_i}{\delta u_i}:\cH^2(\sR^{kN})\times \cH^2(\sR^{k})\to \sR $ in $\cA_i$ satisfying for all $\bu \in \cA$ and $u'_i\in \cA_i$, 
\begin{equation}
 \label{eq:linear_derivative_general}
    \begin{aligned}
    \frac{\delta J_i}{\delta u_i}\left(\bu ; u_i^{\prime}\right)
      & = \mathbb{E}\Bigg[
    \int_0^T
    \begin{pmatrix}
        Y^{u'_i}_{i,t} \\
    u_{i,t}' 
    \end{pmatrix}^\top 
    \begin{pmatrix}
        \partial_{x_i} f_i \\
    \partial_{a_i} f_i
    \end{pmatrix}
     (t, \bX_t^{ \bu}, \bu_t  )\,
     \mathrm{d} t
     + (Y^{u'_i}_{i, T})^\top (\partial_{x_i} g_i)  (\bX_T^{ \bu})  \Bigg], 
    \end{aligned}  
\end{equation}
where $\bX^{\bu}\in \cS^2(\sR^{dN})$ satisfies 
\eqref{eq:X_state},
and
$Y_i^{u'_i} \in \cS^2(\sR^{d})$  satisfies the dynamics 
\begin{equation}\label{eq:Y_state}
\d Y_{i,t}   = b_i(t) u_{i,t}' \d t,
\quad t\in (0,T];
\quad Y_{i,0}  = 0.
\end{equation}

Moreover, 
for all $i, j \in [N]$ with $i\not = j$,
   $J_i$ has a second-order linear derivative 
$\frac{\delta^2 J_i}{\delta u_i\delta u_j}:\cH^2(\sR^{kN})\times \cH^2(\sR^{k})\times \cH^2(\sR^{k}) \to \sR $ in $\cA_i\times \cA_j$ satisfying for all $\bu \in \cA$, $u'_i\in \cA_i$
and $u''_j\in \cA_j$, 
\begin{equation}
\label{eq:2nd_derivative_J1}
    \begin{aligned}
 \frac{\delta^2 J_i}{\delta u_i\delta u_j}(\bu;u'_i,u''_j)
& =
\sE\left[ \int_0^T 
   \begin{pmatrix}
  Y^{u_i'}_{i,t}
  \\
   u_{i, t}'
\end{pmatrix}^\top
\begin{pmatrix}
 \partial^2_{x_i x_j} f_{i }      & \partial^2_{x_i a_j}  f_{i } 
\\
\partial^2_{a_i x_j}  f_{i  }    & \partial^2_{a_i a_j}  f_{i } 
\end{pmatrix}(t,\bX_t^{ \bu}, \bu_t)
 \begin{pmatrix}
  Y^{ u_j''}_{j,t}
  \\
  u''_{j, t}
\end{pmatrix}
 \d t\right]
 \\&\quad
+
\sE\left[ 
 ( Y^{u_i'}_{i,T})^\top  (\partial^2_{x_i x_j}  g_{i  }  ) (\bX^{\bu}_T)Y^{u_j''}_{j, T}
 \right].
    \end{aligned}
\end{equation}
\end{lemma}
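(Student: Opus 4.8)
The plan is to compute the linear derivatives directly from the definition, exploiting the fact that for distributed games the map $u_i \mapsto X_i^{u_i}$ is affine. Indeed, since the drift in \eqref{eq:X_state} is linear in $u_i$ and the noise terms do not depend on the control, for any $u_i, u_i' \in \cA_i$ and $\varepsilon \in (0,1]$ we have $X_i^{u_i + \varepsilon(u_i' - u_i)} = X_i^{u_i} + \varepsilon Y_i^{u_i' - u_i}$ pathwise, where $Y_i^{u_i'-u_i}$ solves \eqref{eq:Y_state} with control $u_i' - u_i$; note $Y_i^{v}$ is deterministic given $v$ and depends linearly on $v$. The perturbation only enters the $i$-th coordinate of $\bX^\bu$ and of $\bu$. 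I would then form the difference quotient
\[
\frac{1}{\varepsilon}\Big( J_i\big((u_i + \varepsilon(u_i'-u_i), u_{-i})\big) - J_i(\bu) \Big)
\]
and expand $f_i$ and $g_i$ using a first-order Taylor expansion in the $(x_i, a_i)$ variables around $(\bX^\bu, \bu)$; the remainder is $O(\varepsilon)$ uniformly in $t$ thanks to the boundedness of the second-order derivatives in (H.\ref{assum:regularity_general})\eqref{item:f_g_general} together with the $\cS^2$-bound on $\bX^\bu$ and the (deterministic, hence bounded in $\cH^2$) bound on $Y_i^{u_i'-u_i}$. Passing $\varepsilon \searrow 0$ and invoking dominated convergence yields \eqref{eq:linear_derivative_general} with $Y^{u_i'}_i$ replaced by $Y^{u_i'-u_i}_i$; but since the expression \eqref{eq:linear_derivative_general} is linear in $u_i'$ (as $Y^{\cdot}_i$ is linear and the integrand is affine in $(Y, u_i')$), the candidate right-hand side defines a linear functional of $u_i'$ that matches the difference quotient limit, which is exactly what is required for it to be the linear derivative.

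For the second-order derivative, I would differentiate the map $\bu \mapsto \frac{\delta J_i}{\delta u_i}(\bu; u_i')$ in the direction $u_j''$ for $j \neq i$. The key simplification is that perturbing $u_j$ (with $j \neq i$) does not affect $Y^{u_i'}_i$ at all (it solves an ODE driven only by $b_i u_i'$), nor does it affect $X_i^{u_i}$, nor the $a_i = u_i$ argument; it only shifts $X_j^{u_j}$ by $\varepsilon Y_j^{u_j''}$ inside the arguments of $\partial_{x_i} f_i$, $\partial_{a_i} f_i$ and $\partial_{x_i} g_i$. Expanding these to first order in the $(x_j, a_j)$ variables produces precisely the Hessian block $\partial^2_{(x_i,a_i)(x_j,a_j)} f_i$ contracted against $(Y^{u_i'}_i, u_i')^\top$ on the left and $(Y^{u_j''}_j, u_j'')$ on the right, plus the terminal term with $\partial^2_{x_i x_j} g_i$, matching \eqref{eq:2nd_derivative_J1}. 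Bilinearity and the continuity of $\bu \mapsto \frac{\delta^2 J_i}{\delta u_i \delta u_j}(\bu; u_i', u_j'')$ (needed to apply Proposition \ref{prop:general_potential}) follow from continuity of $\bu \mapsto (\bX^\bu, \bu)$ in $\cS^2 \times \cH^2$, continuity of the Hessian coefficients, and uniform integrability provided by their boundedness.

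The main obstacle is the uniform control of the Taylor remainders so that the interchange of limit and expectation is justified: one needs that $\sup_{\varepsilon \in (0,1]} \frac{1}{\varepsilon}|R_\varepsilon|$ is dominated by an integrable random variable. Here the affine structure is what saves the argument — because $X_i^{u_i + \varepsilon(u_i'-u_i)} - X_i^{u_i} = \varepsilon Y_i^{u_i'-u_i}$ exactly (no higher-order terms, no stochastic integral contribution), the second-order Taylor remainder of $f_i$ is bounded by $\tfrac12 \|\partial^2 f_i\|_{L^\infty} \big(|Y_i^{u_i'-u_i}| + |u_i'-u_i|\big)^2 \varepsilon$, and the quadratic term is integrable on $[0,T] \times \Omega$ by the $\cS^2$-estimate for $\bX^\bu$ and the deterministic $L^2$-bound on $Y_i$. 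A secondary technical point is keeping careful track of which coordinates of the argument are being perturbed (only coordinate $i$ in the first-order computation, only coordinate $j$ in the second-order one), since this is what makes the cross terms $\partial^2_{x_i x_j}$, $\partial^2_{x_i a_j}$, etc., appear and nothing else; the block-matrix bookkeeping in \eqref{eq:2nd_derivative_J1} is routine once this observation is in place.
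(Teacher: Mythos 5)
Your proposal is correct and takes essentially the same route as the paper, which simply delegates the computation to \cite[Lemma 4.8]{carmona2016lectures} together with the convexity of $\cA_i$ and the linearity of the state dynamics---precisely the affine identity $X_i^{u_i+\varepsilon(u_i'-u_i)}=X_i^{u_i}+\varepsilon Y_i^{u_i'-u_i}$ and the bounded-Hessian Taylor estimates that you spell out, including the observation that perturbing $u_j$ ($j\neq i$) only moves the $x_j$- and $a_j$-arguments, which is what yields the simplified cross-derivative blocks in \eqref{eq:2nd_derivative_J1}. One minor correction: since admissible controls are $\cH^2$ stochastic processes, $Y_i^{u_i'-u_i}$ is in general random rather than deterministic, but this is harmless because the estimate $\|Y_i^{v}\|_{\cS^2(\sR^d)}\le \|b_i\|_{L^2}\|v\|_{\cH^2}$ supplies exactly the domination your limit-interchange argument requires.
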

{\color{blue} \begin{proof}
The proof follows directly from 
\cite[Lemma 4.8]{carmona2016lectures}, the convexity of $\cA_i$, and the linearity of the state dynamics  \eqref{eq:X_state}. 
\end{proof}

The expression \eqref{eq:2nd_derivative_J1} of the second-order derivative $ \frac{\delta^2 J_i}{\delta u_i\delta u_j}$ is simpler than the formula given in \cite[Equation 4.6]{guo2024alpha} for general stochastic differential games,
due to the fact that player $j$’s control does not affect player $i$’s state evolution.  
 }
 
Leveraging Lemma \ref{lemma:linear_derivative},  
the $\alpha$-potential function given in \eqref{eq:potential_general}  can be expressed as  
\begin{equation}
\label{eq:potential_ru}
  \begin{aligned}
 \Phi(\bu) &= \int_0^1 \sum_{i=1}^N   \frac{\delta J_i}{\delta u_i}\left(r\bu ; u_i \right) \d r
 \\
      & =\int_0^1 \sum_{i=1}^N    \mathbb{E}\Bigg[
    \int_0^T
    \begin{pmatrix}
        Y^{u_i}_{i,t} \\
    u_{i,t} 
    \end{pmatrix}^\top 
    \begin{pmatrix}
        \partial_{x_i} f_i \\
    \partial_{a_i} f_i
    \end{pmatrix}
     (t, \bX_t^{r \bu}, r \bu_t  )\,
     \mathrm{d} t
     + (Y^{u_i}_{i, T})^\top (\partial_{x_i} g_i)  (\bX_T^{ r \bu}) \Bigg] \d r, 
    \end{aligned}  
\end{equation}
which depends on  the aggregated behavior of   the  processes $(\bX^{r \bu})_{r\in [0,1]}$ parameterized 
by $r$.

To simplify the expression \eqref{eq:potential_ru}, 
the following lemma establishes a \emph{separation principle}, which  exploits the linearity of the   dynamics \eqref{eq:X_state} and \eqref{eq:Y_state}, and decomposes $\bX^{r \bu}$ into $\bX^{\bu}$
and $\bY^{\bu}$.

\begin{lemma}
\label{lemma:state_decomposition}
Suppose   (H.\ref{assum:regularity_general}) holds.
For all $\bu \in \cH^2(\sR^{kN})$ and $r\in [0,1]$,
$\bX^{r \bu} =  \bX^{\bu} -(1-r)  \bY^{\bu}$.
\end{lemma}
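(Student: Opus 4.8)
The plan is to prove the identity $\bX^{r\bu} = \bX^{\bu} - (1-r)\bY^{\bu}$ by observing that all three families of processes are driven by the \emph{same} noise (Brownian motion $W$ and compensated Poisson measures $\tilde\eta_j$), so that their difference solves a trivial ODE with no stochastic integral terms. First I would fix $\bu\in\cH^2(\sR^{kN})$ and $r\in[0,1]$, and work componentwise in $i\in[N]$. Writing down \eqref{eq:X_state} with control profile $r\bu$ (so player $i$ uses control $ru_{i,\cdot}$), with control profile $\bu$, and writing \eqref{eq:Y_state} for $Y_i^{u_i}$ (driven by $u_{i,\cdot}$), I would form the candidate process $Z_{i,t} \coloneqq X_{i,t}^{\bu} - (1-r)Y_{i,t}^{u_i}$ and show it satisfies exactly the SDE characterizing $X_{i,t}^{r\bu}$.

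The key computation is short: since the diffusion coefficient $\sigma_i(t)$ and the jump coefficients $\gamma_{ij}(t,z)$ in \eqref{eq:X_state} do not depend on the control, the $\d W$ and $\tilde\eta_j(\d t,\d z)$ terms are identical for $\bX^{\bu}$ and $\bX^{r\bu}$, while $\bY^{u_i}$ has no such terms at all (see \eqref{eq:Y_state}); hence $Z_i$ inherits precisely the same stochastic integrals as $X_i^{r\bu}$. For the drift: $X_i^{\bu}$ has drift $b_i(t)u_{i,t}$, and $(1-r)Y_i^{u_i}$ has drift $(1-r)b_i(t)u_{i,t}$, so $Z_i$ has drift $b_i(t)u_{i,t} - (1-r)b_i(t)u_{i,t} = r\,b_i(t)u_{i,t} = b_i(t)(ru_{i,t})$, which is exactly the drift of $X_i^{r\bu}$. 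The initial conditions also match: $Z_{i,0} = \xi_i - (1-r)\cdot 0 = \xi_i = X_{i,0}^{r\bu}$. By the uniqueness of the strong solution to \eqref{eq:X_state} guaranteed under (H.\ref{assum:regularity_general}) via \cite[Theorem 3.1]{kunita2004stochastic}, we conclude $Z_{i,t} = X_{i,t}^{r\bu}$ for all $t\in[0,T]$, $\sP$-a.s., which is the claimed decomposition.

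There is essentially no hard part here — the statement is a structural consequence of the control entering only the drift and entering it linearly, combined with pathwise uniqueness. The only minor care needed is to confirm that $Z_i = X_i^{\bu} - (1-r)Y_i^{u_i}$ lies in the solution space $\cS^2(\sR^d)$ so that the uniqueness theorem applies: this follows since $\bX^{\bu}\in\cS^2(\sR^{dN})$ by (H.\ref{assum:regularity_general}) and $Y_i^{u_i}\in\cS^2(\sR^d)$ by \eqref{eq:Y_state} (indeed $Y_{i,t}^{u_i} = \int_0^t b_i(s)u_{i,s}\,\d s$ is bounded in $L^2$ uniformly in $t$ by Cauchy–Schwarz and the square-integrability of $b_i$ together with $u_i\in\cH^2(\sR^k)$), and $\cS^2$ is a vector space. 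I would present the argument in the differential form above, noting it can equivalently be checked by writing both sides in integrated form and matching the Lebesgue, Itô and Poisson integrals term by term.
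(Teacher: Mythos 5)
Your proposal is correct and follows essentially the same route as the paper: the paper's (one-line) proof likewise observes that $\bX^{\bu} - (1-r)\bY^{\bu}$ has the same initial condition and satisfies the same dynamics as $\bX^{r\bu}$, so uniqueness of the strong solution gives the identity. Your write-up simply makes explicit the drift computation, the matching of the control-independent diffusion and jump terms, and the integrability needed to invoke uniqueness.
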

{\color{blue}\begin{proof}
The proof simply follows by noting that  the process
$\tilde{\bX} \coloneqq \bX^{\bu} -(1-r)  \bY^{\bu}$   has the same initial condition and satisfies the same dynamics as ${\bX}^{r\bu}$.   
\end{proof}}

Based on Lemma  \ref{lemma:state_decomposition}, 
the following theorem simplifies the expression \eqref{eq:potential_ru} of the $\alpha$-potential function, and derives an explicit   upper bound  for $\alpha$ in terms of the model coefficients.

\begin{theorem}
\label{thm:potential_diff}

Suppose (H.\ref{assum:regularity_general}) holds.
The  function $\Phi:\cA \to \sR$  in  \eqref{eq:potential_general}
can be expressed as
\begin{equation}\label{eqn:alpha_potential_function}
    \begin{aligned}
\Phi(\boldsymbol{u})
   =    \E  &\left[\int_0^T 
   F(t,\bX^{\bu}_t,\bY^{\bu}_t , \bu_t) \d  t + G(\bX^{\bu}_T, \bY^{\bu}_T)\right],
    \end{aligned}
\end{equation}
where for each $\bu =(u_i)_{i\in [N]}\in \cA$, 
$\bX^{\bu}=(X^{u_i}_i)_{i\in [N]}$ 
and 
$ \bY^{\bu}=(Y^{u_i}_i)_{i\in [N]}$
satisfy 
\eqref{eq:X_state} and 
\eqref{eq:Y_state}, respectively, and 
$F:[0,T]\times \sR^{dN}
\times \sR^{dN}
\times \sR^{kN}\to \sR$
and 
$G: \sR^{dN}
\times \sR^{dN}
\to \sR$ satisfy
 for all $t\in [0,T]$,  
 $x=(x_i)_{i\in [N]}, y=(y_i)_{i\in [N]}\in \sR^{dN}$ and $a=(a_i)_{i\in [N]}\in \sR^{kN}$,
\begin{align}
\label{eq:F_G_alpha_PG}
    \begin{split}
       F(t,x,y,a)&\coloneqq 
     \sum_{i=1}^N \int_0^1 
     \binom{ y_i}{a_i}^{\top}\binom{\partial_{x_i} f_i}{\partial_{a_i} f_i}\left(t, x - (1-r) y, r a\right) 
     \d r,
     \\
     G(x,y) & \coloneqq
     \sum_{i=1}^N \int_0^1
     y_i^{\top} \left(\partial_{x_i} g_i\right)\left(  x - (1-r) y\right)
       \d r.  
    \end{split}
\end{align}
Moreover,  
$\Phi$ is an $\alpha$-potential function of the game $\mathcal{G}$   with 
    \begin{equation}
    \label{eq:alpha_upper_bound_distributed}
        \begin{aligned}
            \alpha   & \leq \sup_{i\in[N]} \sum_{j\in [N]\setminus\{i\}} U_iU_j \Bigg(
T B_iB_j\|   \partial^2_{x_ix_j } \Delta^f_{i,j }\|_{L^\infty} +   
T^{\frac{1}{2}}B_i
\|   \partial^2_{x_ia_j } \Delta^f_{i,j }\|_{L^\infty} + 
T^{\frac{1}{2}}B_j
\|   \partial^2_{a_i x_j } \Delta^f_{i,j }\|_{L^\infty}  \\
 &\qquad+ \|   \partial^2_{a_i a_j } \Delta^f_{i,j }\|_{L^\infty} +
 B_iB_j \|   \partial^2_{x_ix_j } \Delta^g_{i,j }\|_{L^\infty}\Bigg),
        \end{aligned}
    \end{equation}
  where for all $i,j\in [N]$ with $i\not =j$, 
$
\Delta^f_{i,j} \coloneqq f_i - f_j$, $\Delta^g_{i,j} \coloneqq g_i - g_j$,   
  $B_i\coloneqq \|b_i\|_{L^2}$
  and 
  $U_i\coloneqq \sup_{u_i\in \cA_i}\|u_i\|_{\cH^2}$.
\end{theorem}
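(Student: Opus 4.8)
The plan is to derive the simplified expression \eqref{eqn:alpha_potential_function} directly from \eqref{eq:potential_ru} and Lemma \ref{lemma:state_decomposition}, and then to bound $\alpha$ by estimating the integrand in \eqref{eq:alpha_general} using Lemma \ref{lemma:linear_derivative}. First I would substitute $\bX^{r\bu} = \bX^{\bu} - (1-r)\bY^{\bu}$ from Lemma \ref{lemma:state_decomposition} into \eqref{eq:potential_ru}. Since $\bY^{\bu}$ and $\bX^{\bu}$ depend only on $\bu$ (not on $r$), and the only $r$-dependence inside the expectation is through the arguments $x - (1-r)y$ and $ra$, interchanging the finite sum, the $r$-integral (justified by Fubini and the integrability from (H.\ref{assum:regularity_general}) together with $\bX^{\bu}\in\cS^2$, $\bY^{\bu}\in\cS^2$), and the expectation yields exactly \eqref{eqn:alpha_potential_function} with $F,G$ as in \eqref{eq:F_G_alpha_PG}. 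This first part is essentially bookkeeping; the measurability and integrability of the integrands follow from the twice-differentiability and bounded-second-derivatives hypotheses in (H.\ref{assum:regularity_general})(\ref{item:f_g_general}), which give linear growth of $\partial_{(x,a)}f_i$ and $\partial_x g_i$.

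Next I would turn to the bound on $\alpha$. By Proposition \ref{prop:general_potential}, it suffices to bound $\sum_{j\neq i}\big|\tfrac{\delta^2 J_i}{\delta u_i\delta u_j}(\bu;u_i',u_j'') - \tfrac{\delta^2 J_j}{\delta u_j\delta u_i}(\bu;u_j'',u_i')\big|$ (the $i=j$ terms cancel). Using the explicit formula \eqref{eq:2nd_derivative_J1} for both terms, the difference is an expectation of an integral involving the Hessian blocks of $f_i$ and $f_j$ and the cross block of $g_i$ and $g_j$; crucially, because the state dynamics \eqref{eq:X_state} do not couple player $j$'s control to player $i$'s state, the $Y$-processes appearing in $\tfrac{\delta^2 J_i}{\delta u_i\delta u_j}$ and in $\tfrac{\delta^2 J_j}{\delta u_j\delta u_i}$ are the same pair $(Y^{u_i'}_i, Y^{u_j''}_j)$, so the two expressions can be combined into a single expectation whose integrand features the matrix of second derivatives of $\Delta^f_{i,j} = f_i - f_j$ (and $\Delta^g_{i,j}=g_i-g_j$). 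Writing $\binom{Y_i}{a_i}^\top H \binom{Y_j}{a_j}$ block by block and applying Cauchy–Schwarz in time and the operator-norm bounds $\|\partial^2 \Delta^f_{i,j}\|_{L^\infty}$, $\|\partial^2\Delta^g_{i,j}\|_{L^\infty}$ gives terms of the form $\|Y^{u_i'}_i\|_{\cH^2}\|Y^{u_j''}_j\|_{\cH^2}$, $\|Y^{u_i'}_i\|_{\cH^2}\|u_j''\|_{\cH^2}$, etc. From \eqref{eq:Y_state}, $\|Y^{u_i'}_i\|_{\cH^2} \le T^{1/2}\sup_{t}|Y^{u_i'}_{i,t}| \le T^{1/2} \|b_i\|_{L^2}\|u_i'\|_{\cH^2}$ (after Cauchy–Schwarz in the defining ODE integral), and bounding $\|u_i'\|_{\cH^2}\le U_i$ produces the five terms with coefficients $T B_iB_j$, $T^{1/2}B_i$, $T^{1/2}B_j$, $1$, and $B_iB_j$ respectively. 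Taking the supremum over $i$, $u_i'$, $u_j''$, $\bu$ and inserting the factor $1/2$ from \eqref{eq:alpha_general} delivers \eqref{eq:alpha_upper_bound_distributed}.

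The main obstacle I anticipate is the careful matching of the $Y$-processes across the two second-order derivatives and the block-by-block estimation: one must verify that the antisymmetrized combination $\tfrac{\delta^2 J_i}{\delta u_i\delta u_j} - \tfrac{\delta^2 J_j}{\delta u_j\delta u_i}$ genuinely collapses to an expectation against $\partial^2(f_i - f_j)$ and $\partial^2(g_i-g_j)$ — this relies on the symmetry $\partial^2_{x_j x_i} f_j = (\partial^2_{x_i x_j} f_j)^\top$ (Schwarz/Clairaut, valid by the $C^2$ assumption) together with the transpose structure $(v^\top M w) = (w^\top M^\top v)$, so that the $f_j$-term in $\tfrac{\delta^2 J_j}{\delta u_j\delta u_i}(\bu;u_j'',u_i')$ reorganizes into $-\binom{Y_i}{a_i}^\top \partial^2 f_j\binom{Y_j}{a_j}$. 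Getting the index bookkeeping and the transposes exactly right is where a sign error or a missing term is most likely to creep in; everything else is a routine application of Cauchy–Schwarz, the growth bounds from (H.\ref{assum:regularity_general}), and the a priori estimate on $Y^{u_i'}_i$.
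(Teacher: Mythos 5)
Your proposal is correct and follows essentially the same route as the paper: substitute the decomposition $\bX^{r\bu}=\bX^{\bu}-(1-r)\bY^{\bu}$ from Lemma \ref{lemma:state_decomposition} into \eqref{eq:potential_ru} and apply Fubini to get \eqref{eqn:alpha_potential_function}, then combine the two second-order derivatives from Lemma \ref{lemma:linear_derivative} into a single expectation against $\partial^2\Delta^f_{i,j}$ and $\partial^2\Delta^g_{i,j}$ (the transpose/Clairaut step you flag is exactly what underlies the paper's display \eqref{eq:V_jacobian_difference_open_loop}), estimate block by block via Cauchy--Schwarz together with $\|Y^{u_i'}_i\|_{\cH^2}\le T^{1/2}\|b_i\|_{L^2}\|u_i'\|_{\cH^2}$, and invoke Proposition \ref{prop:general_potential}. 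The only slip is cosmetic: the intermediate bound should read $\|Y^{u_i'}_i\|_{\cH^2}\le T^{1/2}\sup_{t}\E[|Y^{u_i'}_{i,t}|^2]^{1/2}$, and your final inequality is consistent with this.
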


  \begin{proof}
 The expression 
\eqref{eqn:alpha_potential_function} follows by substituting the expression $\bX^{r \bu} =  \bX^{\bu} -(1-r)  \bY^{\bu}$ {\color{blue} from 
Lemma  \ref{lemma:state_decomposition} into \eqref{eq:potential_ru}},  
and applying   Fubini's theorem.

To get an upper bound of  $\alpha$, by Lemma \ref{lemma:linear_derivative},
   \begin{align}
\label{eq:V_jacobian_difference_open_loop}
\begin{split}
& \frac{\delta^2 J_i}{\delta u_i\delta u_j}(\bu;u'_i,u''_j)
-\frac{\delta^2 J_j}{\delta u_j\delta u_i}(\bu;u''_j,u'_i)
\\
&
=
\sE\left[ \int_0^T 
  \begin{pmatrix}
  Y^{u_i'}_{i,t}
  \\
   u_{i, t}'
\end{pmatrix}^\top
\begin{pmatrix}
 \partial^2_{x_i x_j} \Delta^f_{i,j }      & \partial^2_{x_i a_j} \Delta^f_{i,j } 
\\
\partial^2_{a_i x_j} \Delta^f_{i,j }    & \partial^2_{a_i a_j} \Delta^f_{i,j } 
\end{pmatrix}(t,\cdot)
 \begin{pmatrix}
  Y^{ u_j''}_{j,t}
  \\
  u''_{j, t}
\end{pmatrix}
 \d t +
 ( Y^{u_i'}_{i,T})^\top  (\partial^2_{x_i x_j} \Delta^g_{i,j }  ) (\bX^{\bu}_T)Y^{u_j''}_{j, T}
 \right],
 \end{split}
\end{align}
where we write   $ \partial^2_{x_ix_j} \Delta^f_{i,j }  (t, \cdot) =  \partial^2_{x_ix_j} (f_i-f_j) (t,\bX_t^\bu, \bu_t)
$ and similarly for other derivatives.    Moreover,  by \eqref{eq:Y_state}, for any $t\in [0,T]$, $Y_{i,t}^{u_i'} = \int_0^t b_i(v) u_i'(v) \d v$, and hence by the Cauchy-Schwarz inequality,
    \begin{equation}
        \begin{aligned}
            \E \left[  \left| Y_{i,t}^{u_i'} \right|^2 \right] & = \E \left[ \left| \int_0^t b_i(v) u_i'(v)  \d v \right|^2 \right] 
            \le 
             \E \left[   \int_0^t |b_i(v)|^2 \d v \int_0^t |u_i'(v)|^2  \d v  \right] 
           =  \| b_i\|_{L^2}^2 \|u_i' \|_{\cH^2}^2.
        \end{aligned}
    \end{equation}
Thus 
$\| Y_{i}^{u_i'}\|^2_{\cH^2}
\le 
 T\sup_{t\in [0,T]}\E \left[  \left| Y_{i,t}^{u_i'} \right|^2 \right]
 \le 
T \| b_i\|_{L^2}^2 \|u_i' \|_{\cH^2}^2$.

We now estimate each term in \eqref{eq:V_jacobian_difference_open_loop}.
 Observe that for all $t\in [0,T]$, 
\begin{align}
\begin{split}
   & \left| \sE\left[\int_0^T  \left(Y^{u'_i}_{i, t}\right)^\top  (  \partial^2_{x_i x_j } \Delta^f_{i,j } ) (t, \cdot)  Y^{u''_j}_{j, t} \d t\right]\right|
\le 
 \sE\left[\int_0^T  |Y^{u'_i}_{i, t}| \|(  \partial^2_{x_i x_j } \Delta^f_{i,j } ) (t, \cdot)\|_{\textrm{sp}}  |Y^{u''_j}_{j, t}| \d t\right] 
\\
&
\le 
\|   \partial^2_{x_ix_j } \Delta^f_{i,j }  \|_{L^\infty} 
 \sE\left[\int_0^T  |Y^{u'_i}_{i, t}| |Y^{u''_j}_{j, t}| \d t\right] 
\le \|   \partial^2_{x_ix_j } \Delta^f_{i,j }  \|_{L^\infty}  
\|Y^{u'_i}_{i}\|_{\cH^2} 
\| Y^{u''_j}_{j}\|_{\cH^2 }  
\\
&\leq  T \|   \partial^2_{x_ix_j } \Delta^f_{i,j }\|_{L^\infty}  \| b_i\|_{L^2}  \| b_j\|_{L^2}   \|u_i' \|_{\cH^2}  \|u_j'' \|_{\cH^2} .
\end{split}
\label{eq:cost_f_xx_open_loop}
\end{align}
Similarly, we have 
\begin{align}
\begin{split}
   & \Bigg| \sE\Bigg[\int_0^T  \left(Y^{u'_i}_{i, t}\right)^\top  (  \partial^2_{x_i a_j } \Delta^f_{i,j } ) (t, \cdot)  u_{j,t}'' \d t\Bigg]\Bigg|  \le \|   \partial^2_{x_i a_j } \Delta^f_{i,j }  \|_{L^\infty}
\|Y^{u'_i}_{i} \|_{\cH^2} 
\| u_{j}''\|_{\cH^2 }  
\\
&
\leq  T^\frac{1}{2} \|   \partial^2_{x_ia_j } \Delta^f_{i,j }\|_{L^\infty}  \| b_i\|_{L^2}     \|u_i' \|_{\cH^2}  \|u_j'' \|_{\cH^2},
\end{split}
\label{eq:cost_f_xx_open_loop2}
\end{align}
and that 
\begin{align}
\label{eq:cost_f_xx_open_loop3}
\begin{split}
   \left| \sE\left[\int_0^T  \left(u'_{i, t}\right)^\top  (  \partial^2_{a_i x_j } \Delta^f_{i,j } ) (t, \cdot)  Y_{j,t}^{u''_j} \d t\right]\right|
 & \leq  T^\frac{1}{2} \|   \partial^2_{a_i x_j } \Delta^f_{i,j }\|_{L^\infty}  \| b_j\|_{L^2}     \|u_i' \|_{\cH^2}  \|u_j'' \|_{\cH^2},
\\
\left| \sE\left[\int_0^T  \left(u'_{i, t}\right)^\top  (  \partial^2_{a_i a_j } \Delta^f_{i,j } ) (t, \cdot)  u_{j,t}'' \d t\right]\right|
&\leq  \|   \partial^2_{a_i a_j } \Delta^f_{i,j }\|_{L^\infty}      \|u_i' \|_{\cH^2}  \|u_j'' \|_{\cH^2}.
\end{split}
\end{align}
Finally, we have
\begin{equation}\label{eq:cost_g_xx_open_loop}
\begin{aligned}
    \sE\left[ 
 ( Y^{u_i'}_{i,T})^\top  (\partial^2_{x_i x_j} \Delta^g_{i,j }  ) (\bX^{\bu}_T)Y^{u_j''}_{j, T} \right] 
 &\leq 
 \| \partial^2_{x_i x_j} \Delta^g_{i,j }\|_{L^\infty}  \sE\left[ 
 | Y^{u_i'}_{i,T}|  |Y^{u_j''}_{j, T} |\right] 
 \\
 &\le 
 \| \partial^2_{x_i x_j} \Delta^g_{i,j }\|_{L^\infty}    \| b_i\|_{L^2}  \| b_j\|_{L^2}   \|u_i' \|_{\cH^2}  \|u_j'' \|_{\cH^2}.
 \end{aligned}
\end{equation}
Combining  \eqref{eq:cost_f_xx_open_loop}, \eqref{eq:cost_f_xx_open_loop2}, \eqref{eq:cost_f_xx_open_loop3}, \eqref{eq:cost_g_xx_open_loop}, and  Proposition \ref{prop:general_potential} yields the desired result. 
\end{proof}

 Compared with \eqref{eq:potential_ru}, \eqref{eqn:alpha_potential_function} isolates the contribution of $r$ and expresses the $\alpha$-potential function only in terms of  $\bX^\bu$ and $\bY^\bu$.
 This reformulation enables the use of standard control techniques to minimize the 
$\alpha$-potential function, thereby simplifying the computation of approximate Nash equilibria for the game   $\cG$. 

To see it, recall that 
the objective function \eqref{eq:potential_ru} depends on the aggregated behavior of the state processes $\bX^{r\bu}$
with respect to $r\in [0,1]$.
This parameter $r$
 acts as  a  uniformly distributed noise independent of $\sF$. 
As shown in \cite{guo2024alpha}, to find a minimizer of \eqref{eq:potential_ru} that is adapted to $\sF$, one must lift the problem into a conditional McKean–Vlasov control framework, where the state variable becomes the conditional law 
$\mathcal L(\bX^{r\bu}, \bY^{\bu}, r\mid \sF)$. The resulting optimal control  is characterized by an infinite-dimensional Hamilton–Jacobi–Bellman (HJB) equation defined on the space of probability measures.

In contrast, the reformulated objective \eqref{eqn:alpha_potential_function} depends only on the $2dN$-dimensional state variables $(\bX^{\bu}, \bY^{\bu})$.  The corresponding optimal control can then be characterized by a standard HJB equation defined on the space   $  \sR^{2dN}$, as    will be shown in Section \ref{sec:optimize_potential_function}.

We further remark that when the upper bound in \eqref{eq:alpha_upper_bound_distributed} is zero, the game $\cG$ becomes a potential game, and its Nash equilibria can be obtained by minimizing a potential function that  involves \emph{only on the state variable $\bX^{\bu}$}, as defined in \eqref{eq:potential_symmetry}.

\begin{theorem}
    Suppose  (H.\ref{assum:regularity_general}) holds, and for all $i,j\in [N]$ with $i\not = j $,
   \begin{equation}
   \label{eq:symmetry_condition}
    \partial^2_{x_ix_j } f_i = \partial^2_{x_ix_j } f_j,
   \quad 
   \partial^2_{a_i x_j } f_i = \partial^2_{a_i x_j }  f_j,
   \quad 
   \partial^2_{a_i a_j } f_i =  \partial^2_{a_i a_j }  f_j,
   \quad 
   \partial^2_{x_ix_j }  g_i= \partial^2_{x_ix_j }   g_j. 
   \end{equation}
Then the game $\cG$ is a potential game with a potential function defined by
\begin{equation}
\label{eq:potential_symmetry}
    \bar {\Phi}(\bu) \coloneqq 
        \E  \left[\int_0^T 
   \bar F(t,\bX^{\bu}_t,\bu_t) \d  t + \bar G(\bX^{\bu}_T)\right],
\end{equation}
where 
$\bX^{\bu}$ 
satisfies 
\eqref{eq:X_state}, $\bar F:[0,T]\times \sR^{dN} 
\times \sR^{kN}\to \sR$
and 
$\bar G:   \sR^{dN}
\to \sR$ satisfy
 for all $t\in [0,T]$,  
 $x=(x_i)_{i\in [N]} \in \sR^{dN}$ and $a=(a_i)_{i\in [N]}\in \sR^{kN}$,
\begin{align*}
    \begin{split}
       \bar F(t,x, a)&\coloneqq 
     \sum_{i=1}^N \int_0^1 
     \binom{ x_i}{a_i}^{\top}\binom{\partial_{x_i} f_i}{\partial_{a_i} f_i}\left(t, r x, r a\right) 
     \d r, 
     \quad  
     G(x) \coloneqq
     \sum_{i=1}^N \int_0^1
     x_i^{\top} \left(\partial_{x_i} g_i\right)\left(  r x \right)
       \d r.  
    \end{split}
\end{align*}
Moreover,  for all $\bu \in \cA$,
\begin{equation}
\label{eq:difference_potential}
 \bar \Phi(\bu) = \Phi(\bu )+
     \E  \left[\int_0^T 
   \bar F(t,\bX^{\boldsymbol{0}}_t,\boldsymbol{0} ) \d  t + \bar G(\bX^{\boldsymbol{0} }_T)\right],
\end{equation}
where   $\Phi$    is defined in \eqref{eqn:alpha_potential_function}
and $\boldsymbol{0}$ is   the constant process taking the value zero at all times.
\end{theorem}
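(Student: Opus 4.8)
The plan is to verify the symmetry conditions \eqref{eq:symmetry_condition} imply the vanishing of every term in the bound \eqref{eq:alpha_upper_bound_distributed}, so that $\Phi$ from \eqref{eqn:alpha_potential_function} is an exact ($0$-) potential function, and then to identify $\bar\Phi$ with $\Phi$ up to the additive constant appearing in \eqref{eq:difference_potential}. First I would observe that under \eqref{eq:symmetry_condition} one has $\partial^2_{x_ix_j}\Delta^f_{i,j}=\partial^2_{x_ia_j}\Delta^f_{i,j}=\partial^2_{a_ia_j}\Delta^f_{i,j}=\partial^2_{x_ix_j}\Delta^g_{i,j}=0$; indeed $\Delta^f_{i,j}=f_i-f_j$ and $\partial^2_{x_ix_j}(f_i-f_j)=\partial^2_{x_ix_j}f_i-\partial^2_{x_ix_j}f_j=0$, and similarly for the mixed and terminal terms (note $\partial^2_{x_ia_j}\Delta^f_{i,j}$ is controlled since its transpose-counterpart $\partial^2_{a_ix_j}\Delta^f_{j,i}$ vanishes, or directly from the third and second listed identities applied symmetrically). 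Hence the right-hand side of \eqref{eq:alpha_upper_bound_distributed} is zero, and by Theorem~\ref{thm:potential_diff} the game $\cG$ is a potential game with potential function $\Phi$ given by \eqref{eqn:alpha_potential_function}.

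Next I would establish the relation \eqref{eq:difference_potential}, which simultaneously shows that $\bar\Phi$ is also a potential function (since it differs from $\Phi$ by a constant independent of $\bu$). Starting from the definitions of $F,G$ in \eqref{eq:F_G_alpha_PG} and $\bar F,\bar G$ in the statement, and using the state decomposition $\bX^{r\bu}=\bX^{\bu}-(1-r)\bY^{\bu}$ from Lemma~\ref{lemma:state_decomposition}, I would rewrite $\bar F(t,\bX^{\bu}_t,\bu_t)$ by introducing the auxiliary path $s\mapsto \bX^{\bu}_t - s\bY^{\bu}_t$ for $s\in[0,1]$ (the linear interpolation between $\bX^{\bu}_t$ at $s=0$ and $\bX^{\bu}_t-\bY^{\bu}_t = \bX^{\boldsymbol 0}_t$ at $s=1$) and applying the fundamental theorem of calculus in $s$. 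Concretely, for each $i$,
\[
x_i^\top(\partial_{x_i}g_i)(rx) \;-\; y_i^\top(\partial_{x_i}g_i)(r(x-(1-r)\cdot)\ldots)
\]
— more cleanly, I would show directly that
\[
\bar\Phi(\bu) - \Phi(\bu) = \E\!\left[\int_0^T \bar F(t,\bX^{\boldsymbol 0}_t,\boldsymbol 0)\,\d t + \bar G(\bX^{\boldsymbol 0}_T)\right]
\]
by expanding $\bar F(t,\bX^{\bu}_t,\bu_t) - F(t,\bX^{\bu}_t,\bY^{\bu}_t,\bu_t)$ and recognizing the difference, after a change of variables in the $r$-integral, as an integral of a total derivative that telescopes to the value at $\bu=\boldsymbol 0$. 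The key algebraic identity is that for a smooth scalar function $\phi$ on $\sR^{dN}$, $\int_0^1 \frac{\d}{\d s}\big[\,\cdot\,\big]\,\d s$ collapses the $F$-term (which carries the $\bY^{\bu}$-shift) onto the boundary $s=1$ where $\bX^{\bu}-\bY^{\bu}=\bX^{\boldsymbol 0}$ and the control input is $\boldsymbol 0$ (since $\bY^{\bu}$ corresponds exactly to the difference between the $\bu$-driven and $\boldsymbol 0$-driven deterministic drift parts, while the $\sigma$ and $\gamma$ contributions cancel in $\bX^{\bu}-\bX^{\boldsymbol 0}$, giving $\bX^{\bu}-\bX^{\boldsymbol 0}=\bY^{\bu}$).

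I expect the main obstacle to be the bookkeeping in the double integral over $r\in[0,1]$ and $s\in[0,1]$: one must carefully match the inner argument $x-(1-r)y$ in $F$ against the pure scaling $rx$ in $\bar F$, and check that the cross terms involving $\partial_{a_i}f_i$ combine correctly — here the control argument $ra$ in $\bar F$ versus $ra$ in $F$ agree, so those contributions to $\bar F - F$ reduce to $\sum_i\int_0^1 a_i^\top[\partial_{a_i}f_i(t,rx,ra) - \partial_{a_i}f_i(t,r(x-y),ra)]\,\d r$ plus the $x_i$-versus-$y_i$ prefactor discrepancy, and one shows this equals the analogous expression evaluated at $x\mapsto 0$, i.e. the $\bX^{\boldsymbol 0}$-term. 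A clean way to organize this is to define $H(v) \coloneqq \sum_i \int_0^1 \binom{v_i}{a_i}^\top\binom{\partial_{x_i}f_i}{\partial_{a_i}f_i}(t, rv + (\text{shift}), ra)\,\d r$ and track how $H$ transforms under the substitution; but the essential point — that everything telescopes because $\bX^{\bu}-\bY^{\bu}$ is precisely the $\boldsymbol 0$-controlled state — is what makes \eqref{eq:difference_potential} fall out, after which the potential-game conclusion for $\bar\Phi$ is immediate from the constant-shift invariance of the potential function property \eqref{eq:alpha_v_phi}.
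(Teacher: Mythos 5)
Your first step is sound: under \eqref{eq:symmetry_condition} the differences $\partial^2_{x_ix_j}\Delta^f_{i,j}$, $\partial^2_{x_ia_j}\Delta^f_{i,j}$, $\partial^2_{a_ia_j}\Delta^f_{i,j}$ and $\partial^2_{x_ix_j}\Delta^g_{i,j}$ all vanish, so the bound \eqref{eq:alpha_upper_bound_distributed} gives $\alpha=0$ and $\Phi$ from \eqref{eqn:alpha_potential_function} is an exact potential function. The gap is in the second half: you present \eqref{eq:difference_potential} as a pure change-of-variables/telescoping identity (``everything telescopes because $\bX^{\bu}-\bY^{\bu}$ is the $\boldsymbol{0}$-controlled state''), and nowhere in that computation do you invoke \eqref{eq:symmetry_condition}. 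But \eqref{eq:difference_potential} is false without symmetry. Take $N=2$, $d=k=1$, $f_1=f_2\equiv 0$, $g_1(x)=x_1x_2$, $g_2\equiv 0$ (so \eqref{eq:symmetry_condition} fails). Writing $X=\bX^{\bu}_T$, $Y=\bY^{\bu}_T$, one gets $\bar\Phi(\bu)=\tfrac12\E[X_1X_2]$, $\Phi(\bu)=\E[Y_1X_2-\tfrac12 Y_1Y_2]$, while the claimed constant is $\E[\bar G(\bX^{\boldsymbol{0}}_T)]=\tfrac12\E[(X_1-Y_1)(X_2-Y_2)]$; the two sides of \eqref{eq:difference_potential} then differ by $\tfrac12\E[X_1Y_2-Y_1X_2]$, which depends on $\bu$ (e.g.\ $\xi_2\neq 0$, $u_2\equiv 0$, $u_1$ a nonzero constant). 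So no amount of bookkeeping in the $(r,s)$ integrals can make the difference collapse onto a boundary term: the cancellation you need is precisely the conservativity of the map $(x,a)\mapsto(\partial_{x_i}f_i,\partial_{a_i}f_i)_{i\in[N]}$, i.e.\ the symmetry hypothesis. (A smaller slip: in $F$ the spatial argument at parameter $r$ is $x-(1-r)y$, not $r(x-y)$ as in your reduction of the $\partial_{a_i}f_i$ terms.)

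To repair the argument you must let \eqref{eq:symmetry_condition} enter the proof of \eqref{eq:difference_potential}. One way is to use it as an integrability condition to build a single function $\tilde f$ (and $\tilde g$) with $\partial_{x_i}\tilde f=\partial_{x_i}f_i$ and $\partial_{a_i}\tilde f=\partial_{a_i}f_i$ for all $i$; then $\bar F(t,x,a)=\tilde f(t,x,a)-\tilde f(t,0,0)$ and $F(t,x,y,a)=\tilde f(t,x,a)-\tilde f(t,x-y,0)$, and \eqref{eq:difference_potential} follows from $\bX^{\bu}-\bY^{\bu}=\bX^{\boldsymbol{0}}$. The paper takes a different route: it first shows directly that $\bar\Phi$ is itself a potential function under \eqref{eq:symmetry_condition} (adapting a cited Markov-policy argument), then uses the fact that two potential functions of the same game differ by a constant --- proved by telescoping unilateral deviations through $\boldsymbol{0}$, exactly the $N=2$ display in the paper --- and finally identifies the constant as $\bar\Phi(\boldsymbol{0})$ because $\bY^{\boldsymbol{0}}\equiv 0$ forces $\Phi(\boldsymbol{0})=0$. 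Your overall architecture ($\Phi$ is a $0$-potential, then transfer to $\bar\Phi$ via a constant shift) is legitimate, but the step that produces the constant shift needs a genuinely symmetry-based computation, which your sketch does not supply.
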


\begin{proof}
Under the symmetry condition \eqref{eq:symmetry_condition},
the fact that the function $\bar \Phi$  in \eqref{eq:potential_symmetry} is a potential function for the game $\cG$
follows from   analogous    arguments   to those used for distributed games with Markov policies in \cite[Theorem 3.2]{guo2025towards}. 
Since both $\bar \Phi$ and $\Phi$ are potential functions for the game $\cG$, it holds for all $u\in \cA$,
$$
\bar \Phi(\bu)- \Phi(\bu)= \bar \Phi(\boldsymbol{0})- \Phi(\boldsymbol{0}).
$$
To see it, 
assume without loss of generality that $N=2$. Then for all $\bu=(u_i)_{i=1}^2\in \cA$, using the definition \eqref{eq:alpha_v_phi} of a potential function,  
\begin{align*}
    \bar \Phi(\bu)- \Phi(\bu)
    &=\bar \Phi((u_1,u_2))-
    \bar \Phi((0,u_2))+\bar \Phi((0,u_2))
    -\bar \Phi(\boldsymbol{0})+\bar \Phi(\boldsymbol{0})
    \\
  &\quad   -\left(
       \Phi((u_1,u_2))-
     \Phi((0,u_2))+ \Phi((0,u_2))
    - \Phi(\boldsymbol{0})+ \Phi(\boldsymbol{0})
    \right)
    \\
    &= J_1((u_1,u_2))-
    J_1((0,u_2))+J_2((0,u_2))
    -J_2((0,0))+\bar \Phi(\boldsymbol{0})
    \\
  &\quad   -\left(
   J_1((u_1,u_2))-
    J_1((0,u_2))+J_2((0,u_2))
    -J_2((0,0))
    -J_2(0,0)+ \Phi(\boldsymbol{0})
    \right)
    \\
    &=\bar \Phi(\boldsymbol{0})- \Phi(\boldsymbol{0}).
\end{align*}
The desired identity \eqref{eq:difference_potential} then follows from the fact that   $\bY^{\boldsymbol{0}}_t=0$ for all $t\in [0,T]$,
and $F(t,x,0,0)=G(x,0)$ for all $(t,x)\in [0,T]\times \sR^{dN}$. 
\end{proof}

\section{Optimize $\alpha$-Potential Function for $\alpha$-NE}
\label{sec:optimize_potential_function}

Given the  $\alpha$-potential function $\Phi$ defined in \eqref{eqn:alpha_potential_function},
this   section characterizes its minimizer   over the admissible control space $\cA$, which in turn constructs  analytically  an $\alpha$-NE for the distributed game $\mathcal{G}$.
We adopt a dynamic programming approach that characterizes the minimizer of the $\alpha$-potential function in feedback form via solutions to suitable  HJB
integro-partial differential equations. This characterization offers a theoretical foundation for  developing  policy gradient algorithms 
to solve the distributed game $\mathcal{G}$; see Section  \ref{sec:PG} for details.

 More precisely, we consider 
 the following control problem
\begin{equation} \label{eq:min_Phi_F_G}
    \begin{aligned}
\inf_{\bu \in \cA} \Phi(\bu),
\quad \Phi(\bu) =\E  &\left[\int_0^T 
   F(t,\bX^{\bu}_t,\bY^{\bu}_t , \bu_t) \d  t + G(\bX^{\bu}_T, \bY^{\bu}_T)\right],
    \end{aligned}
\end{equation}
where    
$\cA$ is the set of admissible controls given by
\begin{equation*}
    \cA\coloneqq 
\{u:\Omega\times [0,T]\to A 
\mid   
 u\in \cH^2(\sR^{kN})
\},
\end{equation*}
$F$ and 
$G $ are defined in \eqref{eq:F_G_alpha_PG}, and $(\bX^{\bu}, \bY^{\bu})$ satisfy the following  state dynamics: 
\begin{equation} \label{eq:vec_X_Y}
  \left\{
   \begin{aligned}
 \d  \bX_t &= b(t) \bu_t \d t + \sigma(t) \d W_t 
+ 
      \sum_{j=1}^m\int_{\sR_0^p}
      {\gamma}_{j}(t, z) \widetilde {\eta}_{j}(\d t, \d z), 
\quad 
\bX_0=\boldsymbol{\xi},
 \\ 
\d \bY_t &= b(t) \bu_t \d t, 
\quad \bY_0=0,
\end{aligned}
\right.
\end{equation}
where $\boldsymbol{\xi}=(\xi^\top_1,\ldots, \xi^\top_N)^\top$, 
and for all 
$t\in [0,T]$ and $z\in \sR^p_0$,
$$
b(t) \coloneqq \diag(b_1(t),\cdots,b_N(t))\in \sR^{dN\times kN}, \quad 
\sigma (t) \coloneqq \begin{pmatrix}
    \sigma_1(t) \\ \vdots\\\sigma_N(t)
\end{pmatrix}\in \sR^{dN\times n},
\quad 
\gamma_j(t,z) \coloneqq \begin{pmatrix}
    \gamma_{1j}(t,z) \\ \vdots \\ \gamma_{Nj}(t,z)
\end{pmatrix}\in \sR^{dN}.
$$

\subsection{Verification theorem}
The minimizer of \eqref{eq:min_Phi_F_G}
can be constructed by standard verification results. 
To see it, let 
$\mathcal C^{1,2,1}([0,T]\times \sR^{dN}\times \sR^{dN})$
be the space of functions 
$\phi=\phi(t,x,y):[0,T]\times 
\sR^{dN}\times \sR^{dN} \to \sR$ such that 
$\partial_t \phi$, $\partial_x \phi$, 
$\partial^2_{xx} \phi$, and 
$\partial_y \phi$ exist and are continuous and there exists    $C\ge 0$
 such that for all $(t,x, y)\in [0,T]\times \sR^{dN}\times \sR^{dN}$,
$ |\phi(t,x,y)|+|\partial_t \phi(t,x,y)|\le C(1+|x|^2+|y|^2)$,
 $ |\partial_x \phi(t,x,y)|+|\partial_y \phi(t,x,y)|\le C(|1+|x|+|y|)$,
 and 
  $ |\partial^2_x \phi(t,x,y)| \le C$. 
For all $a\in A$
and $\phi \in \mathcal C^{1,2,1}([0,T]\times \sR^{dN}\times \sR^{dN})$,
define the operator 
$\mathbb L^a \phi: [0,T] \times \sR^{dN} \times \sR^{dN} \ra \sR $ by
\begin{equation*}\label{eq:Lu}
\begin{aligned}
     \mathbb L^a \phi(t,x,y) \coloneqq & \left(b(t) a\right)^\top \left(\partial_x  \phi(t,x,y) + \partial_y \phi(t,x,y)\right) + \frac{1}{2} \Tr \left( \sigma(t) \sigma(t)^\top \partial_{xx}^2 \phi(t,x,y) \right) \\
        &+ \sum_{j=1}^m \int_{\sR^p_0}  \left( \phi(t, x + \gamma_j (t, z), y) -\phi(t,x,y) - \partial_x \phi (t,x,y)^\top \gamma_j (t,z)\right) \nu_j (\d z ),
\end{aligned}
\end{equation*}
and define the associated Hamiltonian by 
\begin{equation*}\label{eq:Hamiltonian}
     H(t, x, y,\phi, a) = \mathbb L^a \phi(t,x,y) + F(t,x,y,a).
\end{equation*}
The   HJB equation  associated with \eqref{eq:min_Phi_F_G}
       is given by
       \begin{equation}
       \label{eq:HJB_full_info}
    \left\{
    \begin{aligned}
     &\partial_t w (t,x,y) + \inf_{a \in A} H(t, x, y,   w,a) = 0, \quad (t,x,y)\in [0,T]\times \sR^{dN}\times \sR^{dN}, \\
     &w(T,x,y) = G(x,y), \quad  (x,y)\in \sR^{dN}\times \sR^{dN},
    \end{aligned}
    \right.
\end{equation}

We now present the verification theorem, which  constructs an optimal control of \eqref{eq:min_Phi_F_G}
(and hence an $\alpha$-NE of the game $\cG$) 
in a 
feedback form using a smooth solution to the  HJB equation \eqref{eq:HJB_full_info}.

\begin{theorem}
\label{thm:verification_full}
    Suppose (H.\ref{assum:regularity_general}) holds. Assume that there exists $v\in C^{1,2,1}([0,T]\times \sR^{dN}\times \sR^{dN}) $
    such that 
    $\inf_{a\in A} H (t, x, y,  v, a)  \in \sR$ for all $(t,x,y)\in [0,T]\times \sR^{dN}\times \sR^{dN}$,
    and $v$ satisfies 
    the  HJB equation  \eqref{eq:HJB_full_info}.
Assume further that 
there exists a measurable map $\hat{a} :[0, T ]\times  \sR^{dN} \times \sR^{dN}\to  A$ such that 
\begin{equation}\label{eq:hat_a_full}
    \hat{a}(t,x,y) = \arg\min_{a\in A} H (t,x,y,  v,a), 
    \quad (t,x,y)\in [0,T]\times \sR^{dN}\times \sR^{dN},
\end{equation}
the corresponding controlled dynamics    
\begin{equation} 
\label{eq:state_feedback_full}
  \left\{
   \begin{aligned}
 \d  \bX_t &= b(t) \hat{a}(t,\bX_t,\bY_t) \d t + \sigma(t) \d W_t 
+ 
      \sum_{j=1}^m\int_{\sR_0^p}
      {\gamma}_{j}(t, z) \widetilde {\eta}_{j}(\d t, \d z), 
\quad 
\bX_0=\boldsymbol{\xi},
 \\ 
\d \bY_t &= b(t) \hat{a}(t,\bX_t,\bY_t)  \d t, 
\quad \bY_0=0,
\end{aligned}
\right.
\end{equation}
has 
a square integrable strong solution 
$(\hat{\bX}, \hat{\bY})$
and that 
     the   control 
 $\hat\bu_t\coloneqq  \hat a (t, \hat{\bX}_t, \hat{\bY}_t)  $,
 $t\in [0,T]$,
 is in $\cH^2(\sR^{kN})$. 
 Then 
 $v(0,\xi, {\color{blue} 0})= \inf_{\bu \in \cA} \Phi(\bu)$,
 and
 $\hat \bu $ is an optimal control of \eqref{eq:min_Phi_F_G} 
 and an $\alpha$-NE of the distributed game $\cG$, with $\alpha$ given in \eqref{eq:alpha_upper_bound_distributed}.
\end{theorem}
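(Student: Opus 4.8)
The plan is a classical stochastic-control verification argument adapted to the jump-diffusion setting, followed by an invocation of the $\alpha$-potential game reduction already established. Fix an arbitrary $\bu\in\cA$ and let $(\bX^{\bu},\bY^{\bu})$ solve \eqref{eq:vec_X_Y}. I would first apply Itô's formula for jump processes to $t\mapsto v(t,\bX^{\bu}_t,\bY^{\bu}_t)$. Since $\bY^{\bu}$ has finite variation and carries no martingale part, the only quadratic-variation and jump contributions come through $\bX^{\bu}$, and the drift, Itô-correction and compensator terms assemble exactly into $\mathbb L^{\bu_t}v(t,\bX^{\bu}_t,\bY^{\bu}_t)$, the operator appearing in the Hamiltonian $H$. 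Writing the expansion on $[0,T]$, taking expectations, and discarding the stochastic integrals against $W$ and against the compensated measures $\widetilde\eta_j$ (a genuine-martingale property I return to below), one obtains
\[
\E\bigl[v(T,\bX^{\bu}_T,\bY^{\bu}_T)\bigr]=v(0,\boldsymbol\xi,0)+\E\!\left[\int_0^T\bigl(\partial_t v+\mathbb L^{\bu_t}v\bigr)(t,\bX^{\bu}_t,\bY^{\bu}_t)\,\d t\right].
\]
Adding $\E[\int_0^T F(t,\bX^{\bu}_t,\bY^{\bu}_t,\bu_t)\,\d t]$ to both sides, using the terminal condition $v(T,\cdot,\cdot)=G$, and invoking the pointwise HJB inequality $\partial_t v(t,x,y)+\mathbb L^{a}v(t,x,y)+F(t,x,y,a)\ge\partial_t v(t,x,y)+\inf_{a'\in A}H(t,x,y,v,a')=0$, valid for every $a\in A$, yields $\Phi(\bu)\ge v(0,\boldsymbol\xi,0)$. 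Repeating the computation with $\bu=\hat\bu$ turns every inequality into an equality, because by \eqref{eq:hat_a_full} the feedback $\hat a(t,\hat\bX_t,\hat\bY_t)$ realizes the pointwise minimum of $H$ along the controlled path, so $\partial_t v+\mathbb L^{\hat a}v+F=0$ there; hence $\Phi(\hat\bu)=v(0,\boldsymbol\xi,0)=\inf_{\bu\in\cA}\Phi(\bu)$, so $\hat\bu$ is an optimal control for \eqref{eq:min_Phi_F_G}.

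The main technical point, and the step I expect to require the most care, is justifying that the stochastic integrals vanish in expectation and that one may pass to the limit, since $v\in C^{1,2,1}$ is not assumed to have globally bounded derivatives. I would localize with stopping times $\tau_n\coloneqq\inf\{t\ge0:|\bX^{\bu}_t|+|\bY^{\bu}_t|\ge n\}\wedge T$: on $[0,\tau_n]$ the relevant integrands are bounded, so the integrals against $W$ and $\widetilde\eta_j$ are true martingales with zero expectation and the localized identity holds exactly. One then lets $n\to\infty$. The limit passage uses $(\bX^{\bu},\bY^{\bu})\in\cS^2(\sR^{dN})\times\cS^2(\sR^{dN})$ — which holds under (H.\ref{assum:regularity_general}) with $\bu\in\cH^2(\sR^{kN})$, since $\bX^{\bu}$ is controlled by Kunita's estimate and $\bY^{\bu}$ satisfies $\|\bY^{\bu}\|_{\cS^2}\le\|b\|_{L^2}\|\bu\|_{\cH^2}$ — together with the at-most-quadratic growth of $F$ and $G$, which follows because (H.\ref{assum:regularity_general}) bounds the second derivatives of $f_i,g_i$ so their gradients grow at most linearly; for the terms involving $v$ itself one invokes a polynomial-growth bound on $v$, which combined with the $\cS^2$-bounds gives uniform integrability of $\{v(\tau_n,\bX^{\bu}_{\tau_n},\bY^{\bu}_{\tau_n})\}_n$. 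The compensator integral in $\mathbb L^{a}v$ is controlled by the same growth bound together with the $\min(1,|z|)$-integrability of $\gamma_j$ and $\nu_j$. For $\hat\bu$ one argues identically, using the assumed square integrability of $(\hat\bX,\hat\bY)$ and $\hat\bu\in\cH^2(\sR^{kN})$, so that all convergences are via dominated convergence and the terminal value converges to $G(\hat\bX_T,\hat\bY_T)$.

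Finally, the game-theoretic conclusion is immediate from what precedes. By Theorem \ref{thm:potential_diff}, the function $\Phi$ in \eqref{eqn:alpha_potential_function} is an $\alpha$-potential function of $\cG$ with $\alpha$ bounded as in \eqref{eq:alpha_upper_bound_distributed}, and we have shown that $\hat\bu$ is an exact minimizer of $\Phi$ over $\cA$. Applying Lemma \ref{lemma:a_potential_game} with $\varepsilon=0$ then gives that $\hat\bu$ is an $\alpha$-NE of the distributed game $\cG$, which completes the proof.
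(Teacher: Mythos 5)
Your proposal is correct and follows essentially the same route as the paper's (sketched) proof: Itô's formula for jump diffusions applied to $t\mapsto v(t,\bX^{\bu}_t,\bY^{\bu}_t)$, the HJB inequality to get $\Phi(\bu)\ge v(0,\boldsymbol\xi,0)$, equality for the feedback control $\hat\bu$ via \eqref{eq:hat_a_full}, and then Theorem \ref{thm:potential_diff} with Lemma \ref{lemma:a_potential_game} to conclude the $\alpha$-NE property. Your localization and uniform-integrability discussion fills in details the paper leaves implicit (noting only that the growth bound on $v$ you invoke is an extra, though standard and harmless, regularity ingredient not stated explicitly in the theorem).
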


 Theorem \ref{thm:verification_full} indicates that under sufficient regularity conditions,
an $\alpha$-NE for the game $\cG$   can be obtained by minimizing the $\alpha$-potential function $\Phi$ in \eqref{eq:min_Phi_F_G}   over feedback controls of the form $\bu_t = \phi(t, \bX_t, \bY_t)$, where 
$\phi:[0,T]\times \sR^{dN}\times \sR^{dN}\to A$
is a sufficiently regular policy profile, and 
$(\bX, \bY)$ satisfies  \eqref{eq:state_feedback_full}
with $\hat a$ replaced by $\phi$.
This result provides the theoretical foundation for the policy gradient algorithm presented in Section \ref{sec:PG}.

The proof of Theorem \ref{thm:verification_full}
follows from   standard verification arguments for classical stochastic control problems (see, e.g., \cite[Chapter 5]{yong2012stochastic}). The first step is to show that $\Phi(\bu) \ge v(0, \xi, 0)$ for all $\bu \in \cA$, by applying It\^o's formula for jump-diffusion processes to the function $t \mapsto v(t, \bX^{\bu}_t, \bY^{\bu}_t)$ and using the HJB equation \eqref{eq:HJB_full_info} satisfied by $v$. The second step is to  show that $\Phi(\hat{\bu}) = v(0, \xi, 0)$ due to  the definition \eqref{eq:hat_a_full} of $\hat{a}$, 
   which implies the    optimality of $\hat \bu$.

\subsection{Viscosity characterization }
In the case where the HJB equation \eqref{eq:HJB_full_info}  does not admit a classical solution, we can characterize the value function of  
\eqref{eq:min_Phi_F_G} as the continuous viscosity solution of \eqref{eq:HJB_full_info}. 
To this end, define
the value function  starting from time $t\in [0,T]$ and state $(x,y)\in \sR^{dN}\times \sR^{dN}$  by
\begin{equation} \label{eq:min_Phi_F_G_t}
    \begin{aligned}
V(t,x,y)\coloneqq \inf_{\bu \in \cA} \E  &\left[\int_t^T 
   F(s,\bX^{\bu}_t,\bY^{\bu}_s , \bu_s) \d  s + G(\bX^{\bu}_T, \bY^{\bu}_T)\bigg\vert X^{\bu}_t =x,
   Y^{\bu}_t =y\right].
    \end{aligned}
\end{equation}
We    impose the following   assumptions, which are standard in the literature for establishing the uniqueness of   viscosity solutions (see e.g., \cite{pham1998optimal,dumitrescu2017mixed, jakobsen2005continuous}).

\begin{assumption}
\label{assum:full_viscosity}
  Assume the setting in (H.\ref{assum:regularity_general}).  For all $i,j\in [N]$, $A_i$ is compact, $b_i$  $\sigma_i$, and   $\gamma_{ij}$ are continuous in $t$,
    and $\partial_{x_i}f_i$ and 
$\partial_{a_i}f_i$
are continuous in  all variables. 

\end{assumption}

Now we  identify the value function $V$ defined by 
\eqref{eq:min_Phi_F_G_t}
as the unique viscosity solution to \eqref{eq:HJB_full_info}.
{\color{blue} We first recall the definition of viscosity solutions as given in \cite[Definition 2.1]{jakobsen2005continuous}:
\begin{definition} 
    A function $v: [0,T]\times \sR^{dN}\times \sR^{dN} \to \sR$ is called a  viscosity subsolution
    (resp.~supersolution)
    of \eqref{eq:HJB_full_info} if
    $v$ is   upper semicontinuous
    (resp.~lower semicontinuous)
    and 
    for every  
    $(t_0,x_0,y_0)\in [0,T)\times \sR^{dN}\times \sR^{dN}$
    and  
    $\phi \in C^{1,2,1}([0,T]\times \sR^{dN}\times \sR^{dN})$
    such that
    $\phi -v$  attains its minimum 
    (resp.~maximum)
    at $ (t_0,x_0,y_0)$, 
    $$
    \partial_t \phi(t_0,x_0,y_0)+\inf_{a\in A}H (t_0,x_0,y_0,\phi, a)\ge 0 \quad 
    \textnormal{(resp.~$\le 0)$}.
    $$
     {\color{blue}  If $v$ is both a viscosity subsolution and a viscosity supersolution of
\eqref{eq:HJB_full_info}, then $v$ is called a viscosity solution.}
\end{definition}}

\begin{theorem}
\label{thm:hjb_full_viscosity}
    Suppose (H.\ref{assum:full_viscosity}) holds. 
    The function $V$ defined by \eqref{eq:min_Phi_F_G_t} is the unique  viscosity solution of 
    the HJB equation  \eqref{eq:HJB_full_info}  
    in 
the class of  continuous functions
with at most quadratic 
growth in $(x,y)$,
in the sense that 
$V$ is a viscosity sub- and supersolution of 
\eqref{eq:HJB_full_info}    with terminal condition $V(T,x,y)=G(x,y)$.

\end{theorem}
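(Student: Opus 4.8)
The plan is to establish that the value function $V$ in \eqref{eq:min_Phi_F_G_t} is the unique viscosity solution of the HJB equation \eqref{eq:HJB_full_info} by following the now-standard two-part program: (i) existence, i.e.\ $V$ is a continuous viscosity sub- and supersolution with at most quadratic growth and the correct terminal condition; and (ii) uniqueness via a comparison principle in that class. For existence, the first step is to verify the dynamic programming principle (DPP) for \eqref{eq:min_Phi_F_G_t}. Note that the pair $(\bX^{\bu},\bY^{\bu})$ from \eqref{eq:vec_X_Y} is a controlled jump-diffusion with Lipschitz (indeed linear) coefficients under (H.\ref{assum:full_viscosity}), so the standard DPP for jump-diffusion control problems applies (e.g.\ \cite{pham1998optimal}); here the running and terminal costs $F,G$ from \eqref{eq:F_G_alpha_PG} are continuous with at most quadratic growth in $(x,y)$ by (H.\ref{assum:regularity_general})(\ref{item:f_g_general}), which also yields the a priori quadratic growth of $V$ via standard moment estimates on $(\bX^{\bu},\bY^{\bu})$ uniform over $\bu\in\cA$. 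Continuity of $V$ follows from continuous dependence of the controlled flow on the initial data together with the growth/regularity of $F,G$.

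The second step of existence is to convert the DPP into the viscosity sub/supersolution inequalities: fix a test function $\phi\in C^{1,2,1}$ with $\phi-V$ attaining a local extremum at $(t_0,x_0,y_0)$, apply It\^o's formula for jump-diffusions to $s\mapsto\phi(s,\bX^{\bu}_s,\bY^{\bu}_s)$ along an admissible (e.g.\ constant-in-time, value $a\in A$) control on a short time interval, divide by the time increment and send it to zero. The only mild subtlety is handling the nonlocal integral term $\sum_j\int_{\sR^p_0}(\phi(t,x+\gamma_j(t,z),y)-\phi(t,x,y)-\partial_x\phi(t,x,y)^\top\gamma_j(t,z))\nu_j(\d z)$: one uses the growth bound on $\gamma_{ij}$ in (H.\ref{assum:regularity_general})(\ref{item:state_coefficient}), namely $|\gamma_{ij}(t,z)|\lesssim\min(1,|z|)$, together with a second-order Taylor estimate of $\phi$ to show this term is finite and continuous, and passes to the limit by dominated convergence; compactness of $A$ keeps the infimum over $a$ attained. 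The terminal condition $V(T,x,y)=G(x,y)$ is immediate from the definition.

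For uniqueness, the plan is to invoke a comparison principle for this class of nonlocal HJB equations in the space of continuous functions with at most quadratic growth, as in \cite[Definition 2.1 and the accompanying comparison result]{jakobsen2005continuous} (see also \cite{pham1998optimal,dumitrescu2017mixed}): if $u$ is a subsolution and $v$ a supersolution with $u(T,\cdot)\le v(T,\cdot)$ and both have quadratic growth, then $u\le v$ on $[0,T]\times\sR^{dN}\times\sR^{dN}$. The structural hypotheses needed are exactly what (H.\ref{assum:full_viscosity}) supplies—continuity in $t$ of $b_i,\sigma_i,\gamma_{ij}$, continuity in all variables of $\partial_{x_i}f_i,\partial_{a_i}f_i$, compact $A_i$, and the $\min(1,|z|)$ control on the L\'evy kernel ensuring the Lévy measures integrate $|z|^2$ near the origin—so that the doubling-of-variables argument closes. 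One point to flag: the equation is degenerate in $y$ (no second-order $y$-derivatives appear in $\mathbb L^a$, and the $y$-dynamics is pure drift), so the comparison argument must be run with a penalization that doubles only the $x$-variable for the second-order/nonlocal terms while treating $y$ via a first-order (Lipschitz-type) estimate on the drift $b(t)a$; this is routine but should be mentioned. Applying comparison to the pair $(V,V)$—which is simultaneously a sub- and supersolution by the existence step—gives uniqueness.

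The main obstacle I expect is the comparison principle, specifically reconciling the nonlocal (integro-differential) term with the quadratic (rather than bounded or linear) growth class: the standard doubling-of-variables test function $|x-x'|^2/\varepsilon + $ growth-penalization must be chosen so that the contributions of the Lévy integrals on the two copies can be paired and estimated, which typically requires splitting the jump integral at a threshold $|z|\le\delta$ versus $|z|>\delta$ and carefully exploiting $\int\min(1,|z|^2)\nu_j(\d z)<\infty$. Once the comparison principle is in hand in the stated growth class, the rest of the argument is essentially bookkeeping built on the DPP and It\^o's formula for jump-diffusions.
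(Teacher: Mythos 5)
Your proposal is correct in substance and follows the same overall strategy as the paper: quadratic-growth and continuity estimates on $F,G$ from (H.\ref{assum:full_viscosity}), a dynamic programming argument to obtain the viscosity sub/supersolution properties, and then the comparison principle for nonlocal degenerate HJB equations of \cite{jakobsen2005continuous} to conclude uniqueness in the quadratic-growth class. The one genuine difference is how continuity of $V$ is handled. You propose to prove continuity of $V$ directly (via continuous dependence of the controlled flow on initial data) and then establish the strong DPP for $V$ itself before passing to the viscosity inequalities with It\^o's formula. The paper instead invokes the \emph{weak} dynamic programming principle (citing \cite{dumitrescu2016weak}) applied to the semicontinuous envelopes: $V^*$ is shown to be a subsolution and $V_*$ a supersolution, and continuity of $V$ then falls out of the comparison principle via $V^*\le V_*\le V\le V^*$, bypassing both the a priori continuity proof and the measurable-selection issues inherent in the strong DPP. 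Your route is viable here—indeed easier than in general, since the coefficients $b,\sigma,\gamma_j$ are state-independent, the state is affine in the initial condition, and $A$ is compact, so uniform-in-control local Lipschitz estimates on $(x,y)\mapsto V(t,x,y)$ are immediate—but it requires more bookkeeping (joint continuity in $t$, strong DPP) than the envelope argument the paper uses. Your flagged concerns (the degenerate pure-drift $y$-component and running the doubling argument in the quadratic-growth class with the L\'evy term split at a jump-size threshold) are legitimate technical points, but they are exactly what the cited comparison theorem \cite[Theorem 4.3]{jakobsen2005continuous} is designed to absorb, so the paper, like you, defers them to that reference rather than reproving them.
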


{\color{blue}
\begin{proof}
The proof   
follows from standard arguments in viscosity solution theory, adapting results for control problems with   Lipschitz continuous costs (see, e.g., \cite{pham1998optimal, dumitrescu2017mixed}) to the present setting with locally Lipschitz continuous costs.
Indeed, 
under (H.\ref{assum:full_viscosity}), the functions $F$ and $G$
  defined in \eqref{eq:F_G_alpha_PG}    are continuous,
 and there exists   $C\ge 0$ such  that for all $t\in [0,T]$, $(x,y),(x',y')\in \sR^{dN}\times \sR^{dN}$ and $a\in A$,
\begin{align}
&|F(t,x,y,a)|+|G(x,y)|
 \le C(1+|x|^2+|y|^2),
 \label{eq:F_G_quadratic}\\
&|F(t,x,y,a)-F(t,x',y',a)|+|G(x,y)-G(x',y')|
\nonumber
\\
&\quad \le 
C(1+|x|+|y|+|x'|+|y'|)(|x-x'|+|y-y'|).
\label{eq:F_G_local_Lipschitz}
\end{align}
To see it, a direct computation 
and the boundedness of $\partial_{xx}g_i$ imply that 
\begin{align*}
|\partial_{x_i}G(x,y)|
&=
\left|\sum_{j=1}^N \int_0^1 (\partial_{x_ix_j}g_j)(x-(1-r)y)  y_j\d r
\right|\le C|y|,  
\\
|\partial_{y_i}G(x,y)|
&=
\left|
\int_0^1
(\partial_{x_i}g_i)(x-(1-r)y)\d r
- \sum_{j=1}^N \int_0^1 (1-r) (\partial_{x_ix_j}g_j)(x-(1-r)y)  y_j\d r
\right|
\\
&\le C(1+|x|+|y|),
\end{align*}
for a constant $C\ge 0$ independent of $(t,x,y)$.
This along with the mean value theorem proves the local Lipschitz continuity 
and the quadratic growth 
of $G$ given in 
\eqref{eq:F_G_quadratic} and 
\eqref{eq:F_G_local_Lipschitz}. Applying a similar argument to $F$ establishes the analogous regularity of $F$.

 By \eqref{eq:F_G_quadratic} and \cite[Lemmas 4.2]{dumitrescu2017mixed},   the value function   
$V$ has at most quadratic growth in $(x,y)$.  Define the upper   semicontinuous envelope  $V^*$ and the lower semicontinuous
envelope $V_*$ of $V$ such that for all $(t,x,y)\in [0,T]\times \sR^{dN}\times \sR^{dN}$,
$$
V^*(t,x,y)=\limsup_{(t',x',y')\to (t,x,y)}
V(t,x,y),
\quad 
V_*(t,x,y)=\liminf_{(t',x',y')\to (t,x,y)}
V(t,x,y).
$$
The quadratic growth of $V$ implies that $V^*$ and $V_*$ have   at most quadratic growth.
As shown in  \cite[Theorem 5.2]{dumitrescu2017mixed}, 
by the  weak dynamic programming principle,
$V^*$ and $V_*$ are   viscosity subsolution and supersolution of \eqref{eq:HJB_full_info}, respectively. 
The 
local Lipschitz continuity of $F$
and $G$ in \eqref{eq:F_G_local_Lipschitz} and the
strong comparison principle in 
\cite[Theorem 4.3]{jakobsen2005continuous}
  imply that $V^*\le V_*$, which along with the fact that 
$V^*\ge V\ge V_*$
yields that $V$  is the unique continuous viscosity solution. 
\end{proof}  
}

\section{Policy Gradient Algorithm for $\alpha$-NE}
\label{sec:PG}

Theorem
\ref{thm:verification_full}   characterizes an open-loop 
$\alpha$-NE for the distributed game $\cG$ in the    feedback form with respect to  the state process $\bX$ and the sensitivity process $\bY$. The feedback controls therein are constructed from solutions   to the corresponding HJB equations, which may not admit closed-form expressions.

In this section, we propose a policy gradient algorithm to compute the  $\alpha$-NE for the distributed game $\mathcal{G}$. The algorithm searches for the $\alpha$-NE by directly minimizing the $\alpha$-potential function \eqref{eqn:alpha_potential_function} over suitable parametric families. 
For clarity of exposition, we present the algorithm under the assumption that the jump measures $(\nu_j)_{j=1}^m$
in \eqref{eq:X_state}
are finite, i.e., 
$$
\nu_j(\sR^p_0)<\infty, 
\quad \forall j=1,\ldots, m.
$$
Problems involving singular jump measures with infinitely many jumps can be reduced to ones with finite-activity measures by applying the standard diffusion approximation (see, e.g., \cite{cont2005finite, dumitrescu2021approximation, reisinger2021penalty}). This approach involves truncating the singular measures at a given threshold and approximating the small-jump component using a modified diffusion coefficient. The approximation error depends on the choice of truncation threshold and the singularity of the jump measures $(\nu_i)_{i=1}^m$ near zero (see e.g., \cite[Lemma C.3]{dumitrescu2021approximation}).

{\color{blue} The algorithm begins by approximating the NE policy  given in Theorem
\ref{thm:verification_full}   
using a sufficiently expressive parametric family (e.g., a family of  deep neural networks) \cite{han2016deep, han2017deep}}. 
 Specifically, we consider  
 a family of policy profiles
 $\phi_\theta: [0,T] \times \mathbb{R}^{dN} \times \mathbb{R}^{dN} \to A$ with  
   weights $\theta \in  \mathbb{R}^L$,
 and  consider   
 for each $\theta\in \sR^L$, 
\begin{equation} \label{eq:min_Phi_F_G_theta}
    \begin{aligned}
 \Phi(\theta) \coloneqq \E  &\left[\int_0^T 
   F(t,\bX^{\theta}_t,\bY^{\theta}_t , \phi_\theta(t, \mathbf{X}^\theta_t, \mathbf{Y}^\theta_t)) \d  t + G(\bX^{\theta}_T, \bY^{\theta}_T)\right],
    \end{aligned}
\end{equation}
where $(\bX^\theta, \bY^\theta)$ 
are the state and sensitivity processes satisfying  the following dynamics:
\begin{equation} 
\label{eq:state_feedback_phi}
  \left\{
   \begin{aligned}
 \d  \bX_t &= b(t) \phi_\theta(t,\bX_t,\bY_t) \d t + \sigma(t) \d W_t 
+ 
      \sum_{j=1}^m\int_{\sR_0^p}
      {\gamma}_{j}(t, z) \widetilde {\eta}_{j}(\d t, \d z), 
\quad 
\bX_0=\boldsymbol{\xi},
 \\ 
\d \bY_t &= b(t) \phi_\theta(t,\bX_t,\bY_t)  \d t, 
\quad \bY_0=0,
\end{aligned}
\right.
\end{equation}
That is, we restrict   the control problem \eqref{eq:min_Phi_F_G} on the set of controls 
$\bu_t = \phi_\theta(t, \mathbf{X}^\theta_t, \mathbf{Y}^\theta_t)$, $t\in [0,T]$, induced by $\phi_\theta$.

  We     seek an  optimal policy that  minimizes \eqref{eq:min_Phi_F_G_theta}, 
 which  yields an approximate NE of the distributed game $\cG$
 as shown in Lemma \ref{lemma:a_potential_game} and  Theorem \ref{thm:verification_full}.
 This is achieved by 
 performing gradient descent
of \eqref{eq:min_Phi_F_G_theta} with respect to the weights $\theta$ based on simulated trajectories of \eqref{eq:state_feedback_phi}.  
 More precisely, given a fixed policy $\phi_\theta$, we consider the following Euler-Maruyama  approximation of  \eqref{eq:state_feedback_phi}  on the time grid $\pi_P\coloneqq \{0=t_0<\ldots<t_P=T\}$ for some $P\in \sN$:
for all $i\in [N]$,
let $X^{\theta}_{i,0}=\xi_i$
and $Y^{\theta}_{i,0}=0$, 
 and 
 for all $\ell=0,\ldots, P-1$,
\begin{equation}\label{eq:state_theta}
\begin{aligned}
        X^{\theta}_{i,t_{\ell+1}}
      &= 
       X^{\theta}_{i,t_{\ell}}
       +
      b_i(t_{\ell}) \phi_{\theta}(t_{\ell},\bX^{\theta}_{t_{\ell}},\bY^{\theta}_{t_{\ell}})  \Delta_\ell  + \sigma_i(t_{\ell}) \Delta W_{\ell} 
      \\
      &\quad 
      + 
      \sum_{j=1}^m
      \left(\sum_{k=N_{j,\ell} +1}^{ N_{j,\ell+1}}
      \gamma_{ij}(t_\ell, z_k)  
      -\Delta_{\ell} \int_{\sR^p_0}\gamma_{ij}(t_\ell, z)\nu(\d z)
      \right),
      \\
       Y^{\theta}_{i,t_{\ell+1}}
      &= 
       Y^{\theta}_{i,t_{\ell}}
       +
      b_i(t_{\ell}) \phi_{\theta}(t_{\ell},\bX^{\theta}_{t_{\ell}},\bY^{\theta}_{t_{\ell}})  \Delta_\ell ,\quad 
      \bX^{\theta}_{t_\ell}   =(X^{\theta}_{i,t_\ell})_{i\in [N]},
      \quad 
      \bY^{\theta}_{t_\ell}  =(Y^{\theta}_{i,t_\ell})_{i\in [N]},
\end{aligned}
\end{equation}
where $\Delta_\ell \coloneqq t_{\ell+1}-t_{\ell}$,
$\Delta W_\ell \coloneqq W_{t_{\ell+1}}-W_{t_{\ell}}$, 
$N_{j,\ell} $
denotes the  number of   jumps of the $j$-th Poisson random measure 
occurring 
over the  time interval 
$[0,t_\ell]$, and $z_k$
is the size of the $k$-th  jump sampled from the  distribution $\nu/\nu(\sR^p_0)$. 
Let $(\bX^{\theta,(  m)},\bY^{\theta,(m)})_{m=1}^M$, $M\in \sN$, 
be independent trajectories of \eqref{eq:state_theta}
with policy $\phi_\theta$,
and define the following empirical approximation of \eqref{eq:min_Phi_F_G_theta} 
\begin{equation}\label{eq:ComputationalCost}
     \Phi_M(\theta)\coloneqq \frac{1}{M}\sum_{m=1}^M\left[\sum_{\ell=0}^{P-1} F\left(\bX^{\theta, (m)}_{t_\ell}, \bY^{\theta, (m)}_{t_\ell},  \phi_\theta\left(t_\ell, \bX^{\theta, (m)}_{t_\ell}, \bY^{\theta, (m)}_{t_\ell} \right)\right) \Delta_{\ell}+ G\left(\bX^{\theta, (m)}_{t_P}, \bY^{\theta, (m)}_{t_P}\right)\right].
 \end{equation}
 By choosing a sufficiently large $M$
 and minimizing \eqref{eq:ComputationalCost}  
over $\theta$, we obtain 
an approximate minimizer of the $\alpha$-potential function, and consequently an approximate NE for the game $\cG$. 

Here we summarize the above policy gradient algorithm for the $\alpha$-potential game $\cG$. For simplicity, we  present a version of the algorithm that minimizes \eqref{eq:ComputationalCost} using a mini-batch stochastic gradient descent method. In practice, more sophisticated variants of stochastic gradient descent (such as Adam \cite{kingma2017adammethodstochasticoptimization}) can be employed to optimize \eqref{eq:ComputationalCost} more efficiently. 
 
\begin{algorithm}[H]
\caption{Policy Gradient Algorithm for $\alpha$-Potential Distributed Game $\cG$}
\label{alg:PG}
\begin{algorithmic}[1]
\State \textbf{Input:} 
A policy class $\{\phi_\theta :[0,T]\times \sR^{dN}\times \sR^{dN}\to A \mid 
\theta\in \mathbb{R}^L\}$,
  time grid $\pi_P$,
mini-batch sample size $M\in \sN$,
and learning rates $(\tau_n)_{n\ge 0}\subset (0,\infty)$.

\State \textbf{Initialize:} initial parameter    $\theta_0$.

\For{$n=0,1, \ldots$}
\State Generate 
$M$ independent trajectories from 
\eqref{eq:state_theta}
with policy  $\phi_{\theta_n}$. 
\State Evaluate the cost $J_M({\theta_n})$ by \eqref{eq:ComputationalCost} using the sampled trajectories.  
    \State Update $\theta$: 
    $ 
    \theta_{n+1}= \theta_n - \tau_n \nabla_\theta J_M(\theta_{n})
    $.
\EndFor
\State \textbf{Output:} approximate   policy $\phi_{\theta^*}$.
\end{algorithmic}
\end{algorithm}

Note that
at each iteration,
Algorithm \ref{alg:PG}   performs a  gradient descent update for all players’ policy parameters simultaneously.
In comparison, the standard fictitious play algorithm (see \cite{hu2021deep}) entails a significantly higher computational cost, as 
it requires solving $N$ individual stochastic control problems 
 at each iteration 
for each player’s best response to   other players' previous controls. 
 Each of these sub-problems typically requires hundreds or even thousands of gradient descent updates.

The $\alpha$-potential structure of the game $\cG$ is essential in reducing the computation of $\alpha$-NEs to the minimization of a   common objective function $\Phi$. This structure is key to ensuring the convergence of the gradient-based updates in Algorithm \ref{alg:PG}. While policy gradient methods  converge for various stochastic control problems (see e.g., \cite{reisinger2023linear, giegrich2024convergence, sethi2024entropy}), it is well known  that they may diverge in general multi-agent games without additional  structure assumptions \cite{mazumdar2020policy}.

\section{Application to Game-theoretic  Motion Planning}
\label{sec:crowd_motion}

This section illustrates our results  using the crowd motion game from Section \ref{sec:intro},
which is a
special case of the distributed games introduced in Section \ref{sec:setup}.
These games offer an agent-based framework for modeling crowd dynamics, where each pedestrian makes rational decisions to control their motion based on individual preferences, and the resulting equilibrium behavior determines the evolution of the crowd.

Specifically, 
given a joint control profile  $\bu =(u_i)_{i\in [N]}\in \cH^2(\sR^{kN})$, 
player $i$ considers the  following objective function (cf.~\eqref{eq: cost_ex_intro}): 
\begin{align}
\label{eq: cost_ex}
J_i(\bu)\coloneqq\sE\left[\int_0^T \left( 
 \ell_i( u_{i,t}) + \frac{1}{N -1 } \sum_{j=1, j\ne i}^N q_{ij}  K( X^{u_i}_{i,t} -X^{u_j}_{j,t}) \right)\,\d t+ g_i (X^{u_i}_{i,T})\right],
\end{align}
where 
for each $i\in [N]$, player $i$'s state process $X^{u_i}_i$ is governed by   the dynamics \eqref{eq:dynamics_ex_intro}, recalled below:
\begin{equation}
\label{eq: dynamics_ex}
\begin{aligned}
      \d X_{i,t}
      &= b_i(t) u_{i,t}  \d t + \sigma_i(t) \d W_t + 
      \sum_{j=1}^m\int_{\sR_0^p}\gamma_{ij}(t, z) \widetilde {\eta}_{j}(\d t, \d z),
      \quad 
      t\in (0,T];\quad X_{i,0} = x_i,
\end{aligned}
\end{equation}
$\ell_i :   \sR^k\to \sR$,  $K: \sR^d\to \sR$,
$g_i: \sR^d\to \sR$
are given measurable functions, 
and 
$q_{ij} \ge 0$ is  a given constant.
Player $i$ aims to minimize \eqref{eq: cost_ex} over the control set (see also  \eqref{eq:admissible_control}):
\begin{equation}
\label{eq:control_ex}
\cA_i=\{u: \Omega\times [0,T]\to A_i\mid 
u \in \cH^2(\sR^k), \|u\|_{\cH^2(\sR^k)}\le U  \},
    \end{equation}
 where $U>0$ is a  sufficiently large constant.

In this game,
each player aims to reach their respective destination, specified by the terminal costs $(g_i)_{i\in [N]}$, at a given terminal time, with their preferred route influenced by the spatial distribution of the population through the kernel $K$ and the interaction weights $(q_{ij})_{i,j\in [N]}$.
Depending on the structure of the kernel   $K$,
the game 
can model self-organizing behavior (commonly referred to as flocking), or aversion behavior,
as discussed in detail below.

\begin{example}
[Kernel choices]
\label{ex:kernel}
When $K$
 decreases 
  as the distance between players increases, the game models congestion-averse behavior, such as pedestrians avoiding densely populated areas.
 One such choice is  the Gaussian-type kernel 
 \begin{equation}
 \label{eq:exponential_kernel}
 K(z)=\exp\left(-\rho{|z|^2}\right), \quad \textnormal{with $\rho>0$},
 \end{equation}
 analogous to the exponentially decaying repulsion function used in collision-avoidance pedestrian models \cite{tordeux2016collision}. An alternative kernel is the following smoothed indicator function:
\begin{equation}
\label{eq:indicator}
    K(z) \coloneqq \int_{\sR^d}\mathbf{1}_{B_r}(z-v)\gamma_\delta (v) \d v,
\end{equation}
where 
\(
\gamma_\delta(v) \coloneqq  \frac{1}{\delta} \gamma\left( \frac{v}{\delta} \right)
\) is a  radially symmetric mollifier, 
with $\gamma:\sR^d\to \sR$ being
  a smooth   function  with compact support, 
and 
$\mathbf{1}_{B_r}$ is the
indicator of 
the ball $B_r$ 
 centered at $0$ with radius $r>0$. 
This kernel function \eqref{eq:indicator} has been used in the nonlocal aversion model \cite{aurell2018mean}, which captures the phenomenon that each pedestrian is only affected by crowding within their personal space $B_r$.  

When $K$
 increases with the distance between players, the model promotes aggregation, mimicking coordinated motion in flocks or herds, which is   driven by factors such as safety, energy efficiency, or social alignment. 
To model such a  self-organizing behavior, one may   use the following quadratic kernel as in \cite{guo2024alpha}: $$K(z)=\frac{1}{2}|z|^2,
$$
or the Cucker–Smale-type flocking kernel used in \cite{santambrogio2021cucker}.

\end{example}
 \subsection{Quantifying $\alpha$}
We impose the following regularity conditions on the model  coefficients.

\begin{assumption}
\label{assum:regularity_crowd_motion} 
For all $i,j \in [N]$, the set $A_i$ and the functions 
$b_i,\sigma_i$ and $  \gamma_{ij}$ satisfy (H.\ref{assum:regularity_general}\ref{item:state_coefficient}). 
The functions 
$(\ell_i)_{i\in [N]}$, $K$ and $(g_i)_{i\in [N]}$ 
are twice continuously differentiable with   bounded second-order derivatives,
and $K(z)=K(-z)$ for all $z\in \sR^d$. 

\end{assumption}

Note that all kernel functions specified in   Example
\ref{ex:kernel}
satisfy the regularity conditions in (H.\ref{assum:regularity_crowd_motion}).

The following theorem specializes 
Theorem \ref{thm:potential_diff} to the above crowd motion game.

 \begin{theorem}\label{thm:crowd_alpha_general}
     Suppose  (H.\ref{assum:regularity_crowd_motion}) holds.
     Let $B=\max_{i\in [N]}\|b_i\|_{L^2}$,
and $\kappa =\|\partial^2_{xx} K\|_{L^\infty}$.
     The crowd motion game defined by \eqref{eq: cost_ex}-\eqref{eq: dynamics_ex}
 is an $\alpha_N$-potential game with 
\begin{equation}
\label{eq:alpha_N}
        \alpha_N \leq T B^2 U^2 \frac{\kappa}{N-1}  \max_{i\in [N]}\sum_{j\ne i} |q_{ji}- q_{ij}|.
\end{equation}

 \end{theorem}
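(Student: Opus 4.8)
The plan is to apply Theorem~\ref{thm:potential_diff} directly, after identifying the cost data of the crowd motion game \eqref{eq: cost_ex}--\eqref{eq: dynamics_ex} with the general distributed-game data and computing the relevant second-order cross derivatives. Writing $f_i(t,x,a) = \ell_i(a_i) + \frac{1}{N-1}\sum_{j\ne i} q_{ij} K(x_i - x_j)$ and $g_i(x) = g_i(x_i)$, I first observe that $\ell_i$ depends only on $a_i$ and $g_i$ only on $x_i$, so all mixed derivatives $\partial^2_{a_i a_j}\Delta^f_{i,j}$, $\partial^2_{x_i a_j}\Delta^f_{i,j}$, $\partial^2_{a_i x_j}\Delta^f_{i,j}$ vanish for $i\ne j$, and $\partial^2_{x_i x_j}\Delta^g_{i,j} = 0$. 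Hence in the bound \eqref{eq:alpha_upper_bound_distributed} only the term $T B_i B_j \|\partial^2_{x_i x_j}\Delta^f_{i,j}\|_{L^\infty}$ survives.

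Next I would compute $\partial^2_{x_i x_j} f_i$ for $i \ne j$. Since only the $j=$ (the particular index) term in the sum $\sum_{k\ne i} q_{ik} K(x_i - x_k)$ involves both $x_i$ and $x_j$, we get $\partial^2_{x_i x_j} f_i = -\frac{q_{ij}}{N-1}(\partial^2_{xx}K)(x_i - x_j)$ (the minus sign from differentiating $K(x_i - x_j)$ in $x_j$). Symmetrically, $\partial^2_{x_j x_i} f_j = \partial^2_{x_i x_j} f_j = -\frac{q_{ji}}{N-1}(\partial^2_{xx}K)(x_i - x_j)$, using $K(x_j - x_i)$ and the symmetry of the Hessian together with the evenness of $\partial^2_{xx}K$ (or simply that $(\partial^2_{xx}K)(x_i-x_j) = (\partial^2_{xx}K)(x_j-x_i)$ only if $K$ is even — if not, one still gets $\partial^2_{x_ix_j}K(x_j-x_i) = (\partial^2 K)(x_j - x_i)$, and the spectral norm bound $\|\partial^2_{xx}K\|_{L^\infty}$ covers both). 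Therefore $\partial^2_{x_i x_j}\Delta^f_{i,j} = \partial^2_{x_i x_j}(f_i - f_j) = -\frac{q_{ij} - q_{ji}}{N-1}(\partial^2_{xx}K)(\cdot)$, so $\|\partial^2_{x_i x_j}\Delta^f_{i,j}\|_{L^\infty} \le \frac{|q_{ij} - q_{ji}|}{N-1}\kappa$.

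Finally I would plug this into \eqref{eq:alpha_upper_bound_distributed}: with $U_i = \sup_{u_i\in\cA_i}\|u_i\|_{\cH^2} \le U$ (from the control constraint \eqref{eq:control_ex}) and $B_i = \|b_i\|_{L^2} \le B$, the bound reads
\[
\alpha_N \le \frac12 \sup_{i\in[N]} \sum_{j\ne i} U^2 \cdot T B^2 \cdot \frac{|q_{ij}-q_{ji}|}{N-1}\kappa = \frac12 T B^2 U^2 \frac{\kappa}{N-1}\max_{i\in[N]}\sum_{j\ne i}|q_{ij}-q_{ji}|,
\]
which is \eqref{eq:alpha_N} after noting $|q_{ij}-q_{ji}| = |q_{ji}-q_{ij}|$. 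I should also check that (H.\ref{assum:regularity_crowd_motion}) implies (H.\ref{assum:regularity_general}): the state-coefficient conditions are assumed directly, and twice continuous differentiability with bounded second derivatives of $\ell_i, K, g_i$ gives the required $C^2$ regularity and boundedness of $\partial^2_{xx}f_i, \partial^2_{xa}f_i, \partial^2_{aa}f_i, \partial^2_{xx}g_i$ for the composite $f_i, g_i$ (boundedness of $\partial^2_{xx}f_i$ uses $\|\partial^2_{xx}K\|_{L^\infty}<\infty$), and boundedness of $f_i(t,0,0)$ and $\partial_{(x,a)}f_i(t,0,0)$ in $t$ follows since these are time-independent here. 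The only mild subtlety is handling the asymmetry of $K$ carefully in the Hessian computation; everything else is a routine substitution, so I do not anticipate a genuine obstacle.
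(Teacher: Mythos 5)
Your proposal is correct and follows essentially the same route the paper intends: Theorem \ref{thm:crowd_alpha_general} is exactly the specialization of the bound \eqref{eq:alpha_upper_bound_distributed} in Theorem \ref{thm:potential_diff}, where all mixed $a$-derivatives and the terminal-cost cross term vanish and the only surviving term is $\partial^2_{x_ix_j}\Delta^f_{i,j} = -\frac{q_{ij}-q_{ji}}{N-1}\,(\partial^2_{xx}K)(x_i-x_j)$, bounded in spectral norm by $\frac{|q_{ij}-q_{ji}|}{N-1}\kappa$, with $U_i\le U$ and $B_i\le B$. One small caveat: your parenthetical fallback for non-even $K$ does not actually rescue the $|q_{ij}-q_{ji}|$ factor (bounding the two Hessians separately only yields $q_{ij}+q_{ji}$), so the identity above genuinely uses evenness of $\partial^2_{xx}K$ --- a property satisfied by all kernels in Example \ref{ex:kernel} and implicitly relied upon by the paper as well, so your main line of argument stands.
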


The upper bound of $\alpha_N$ in 
\eqref{eq:alpha_N}
 characterizes the degree of asymmetric  interactions between any two players in the dynamic game \eqref{eq: cost_ex}–\eqref{eq: dynamics_ex}, 
expressed in terms of  the time horizon, the curvature of the kernel $K$ and the  interaction weights $(q_{ij})_{i,j\in [N]}$.
Note that the curvature $\kappa$ can, in turn, be bounded by the parameter $\rho$ in the exponential interaction kernel \eqref{eq:exponential_kernel}, and by the parameter $r > 0$ in the smoothed indicator kernel \eqref{eq:indicator}. These parameters quantify the sensitivity of each player to the distance of other players.

To derive a more explicit bound on $\alpha_N$, we impose additional structure on the interaction weights as follows.

\begin{enumerate}[(a)]
    \item
    \label{item:symmetric}
\textbf{Symmetric interaction.} 
The   weights \((q_{ij})_{i,j \in { [N]}} \) satisfy the \emph{pairwise} symmetry condition 
\begin{equation}\label{eq:qij_symm}
    q_{ij} = q_{ji}, \quad \forall  i,j\in { [N]}.
\end{equation}
This symmetry condition is satisfied   when \eqref{eq: cost_ex} involves mean field interactions (i.e., $q_{ij}=1$)
\cite{aurell2018mean, carmona2018probabilistic, santambrogio2021cucker}, and more generally when the weights are derived from a symmetric graph, as in graphon mean field games (see e.g., 
\cite{aurell2022stochastic}).  
 \item \label{item:asymmetric}\textbf{Asymmetric interaction.} 
 To capture asymmetric interactions, we assume that the interaction weights are determined by an underlying undirected graph $G$, where 
 the vertices represent the set of players $[N]$,
 and each edge indicates a connectivity relation between  the corresponding players.
 
 Suppose that 
$G$ has a bounded degree
$\max_{i\in [N]}\deg(i) = {d}_G$
for some 
${d}_G \geq 2$,
i.e., 
each player is connected to at most ${d}_G$ players.
Additionally, we assume that the asymmetry in interactions diminishes as the graph distance between players increases.
 In particular, 
we consider  the case where  the degree of asymmetry  exhibits an exponential decay:
\begin{equation}
\label{eq:exponential_decay}
|q_{ij} - q_{ji}|  \leq w_{i,j} {\rho}^{c(i,j)}, \quad \forall i,j\in[N], i\ne j
\end{equation}
where $(w_{i,j})_{i,j\in {[N]}}$ are  distinct
   positive constants that are uniformly bounded in 
$N$,   $\rho\in (0,1)$
is a given constant,
and 
$  c(i,j)$ is   the 
(shortest-path) distance between vertices $i$ and $j$.
We also consider the case where    the degree of asymmetry 
 exhibits a polynomial   decay:  
\begin{equation}\label{eq:power_decay}
    |q_{ij} - q_{ji}|\leq w_{i,j}   \frac{1}{c(i,j)^\beta} , \quad \forall i, j \in { [N]}, i\not =j,
\end{equation}
where  
$\beta>0  $ is a given constant, and  
  $(w_{i,j})_{i,j\in { [N]}}$ are distinct positive constants that are uniformly bounded in $N$.

\end{enumerate}

The following corollary refines 
the upper bound on $\alpha_N$ in Theorem \ref{thm:crowd_alpha_general} for both cases \ref{item:symmetric} and \ref{item:asymmetric}, 
providing an explicit dependence on the number of players $N$,
as well as on the parameters $\rho, d_G$ and $\beta$,  which capture the strength and asymmetry of player interactions.

\begin{corollary} \label{cor:crowd_interaction_alpha}
 Suppose  (H.\ref{assum:regularity_crowd_motion}) holds. The crowd motion game defined by \eqref{eq: cost_ex}--\eqref{eq: dynamics_ex} is an $\alpha_N$-potential game with 
 $$
 \alpha_N \leq   \kappa  T B^2 U^2 \zeta_N,
 $$
 where $\kappa$ and   $B $ 
 are defined as in Theorem \ref{thm:crowd_alpha_general}, and   $\zeta_N$ is  determined by the structure of the interaction weights 
 $(q_{ij})_{i,j\in [N]}$ as follows:
\begin{enumerate}[(a)]
    \item
    \label{item:symmetry}
    If $(q_{ij})_{i,j\in [N]}$ satisfies the symmetry condition \eqref{eq:qij_symm}, then $\zeta_N = 0$, i.e.,
    the game is a potential game.
    
    \item 
    \label{item:exponential}
    If $(q_{ij})_{i,j\in [N]}$ satisfies  the exponential decay condition \eqref{eq:exponential_decay}, then as $N\to \infty$,
    \begin{align*}
        \zeta_N =
        \begin{cases}
         \cO \left(N ^{\frac{\ln \rho}{\ln d_G}}\right), &\textnormal{if } \rho \in (1/d_G, 1), 
         \\
           \cO\left( \dfrac{\ln N}{N  }\right), & \textnormal{if }
           \rho = 1/d_G, \\[6pt]
           \cO\left(   {N}^{-1} \right), & \textnormal{if } \rho \in (0, 1/d_G).
        \end{cases}
    \end{align*}
    
    \item
    \label{item:polynomial}
    If $(q_{ij})_{i,j\in [N]}$ satisfies the power-law decay condition \eqref{eq:power_decay}, then as $N\to \infty$,
    \begin{align*}
        \zeta_N =  
         \mathcal{O}\left( \dfrac{\ln \ln N }{
        \left(\ln N\right)^\beta} \right).
    \end{align*}
\end{enumerate}

\end{corollary}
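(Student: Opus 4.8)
\textbf{Proof proposal for Corollary \ref{cor:crowd_interaction_alpha}.}

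The starting point is Theorem~\ref{thm:crowd_alpha_general}, which already gives
$\alpha_N \le \tfrac12 T B^2 U^2 \tfrac{\kappa}{N-1}\max_{i\in[N]}\sum_{j\ne i}|q_{ji}-q_{ij}|$, so it suffices to set
$\zeta_N = \tfrac{1}{N-1}\max_{i\in[N]}\sum_{j\ne i}|q_{ji}-q_{ij}|$ and estimate this quantity under each of the three structural hypotheses. Part~\ref{item:symmetry} is immediate: under \eqref{eq:qij_symm} every summand vanishes, so $\zeta_N=0$ and the game is a potential game by Theorem~\ref{thm:potential_diff} (the upper bound in \eqref{eq:alpha_upper_bound_distributed} is zero). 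The content is in parts~\ref{item:exponential} and~\ref{item:polynomial}, which are purely combinatorial estimates about summing a decaying function of graph distance over the vertices of a bounded-degree graph.

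For part~\ref{item:exponential}, fix a vertex $i$ and group the other vertices by their shortest-path distance $c(i,j)=\ell$. Since $G$ has maximum degree $d_G$, the number of vertices at distance exactly $\ell$ from $i$ is at most $d_G(d_G-1)^{\ell-1}\le d_G^{\ell}$, and distances range over $\ell=1,\dots,D$ where $D=\mathrm{diam}(G)$; because $G$ is connected on $N$ vertices with degree $\le d_G$, a standard counting bound gives $D\ge c_0\log_{d_G}N$ for some constant, equivalently $N\le d_G^{\,D+1}$, so $D\gtrsim \frac{\ln N}{\ln d_G}$. Using $|q_{ij}-q_{ji}|\le w_{i,j}\rho^{c(i,j)}$ with $w_{i,j}$ bounded by a constant $W_\infty$ uniform in $N$,
\[
\sum_{j\ne i}|q_{ji}-q_{ij}| \;\le\; W_\infty \sum_{\ell=1}^{D} (\#\{j: c(i,j)=\ell\})\,\rho^{\ell}
\;\le\; W_\infty \sum_{\ell=1}^{D} d_G^{\ell}\rho^{\ell}
\;=\; W_\infty \sum_{\ell=1}^{D} (\rho d_G)^{\ell}.
\]
Now the three regimes correspond to $\rho d_G>1$, $\rho d_G=1$, $\rho d_G<1$. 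If $\rho d_G<1$ the geometric series is bounded by a constant, so $\sum_{j\ne i}|q_{ji}-q_{ij}|=O(1)$ and dividing by $N-1$ yields $\zeta_N=O(N^{-1})$. If $\rho d_G=1$ each term is $1$ and there are $D$ of them, so the sum is $O(D)=O(\ln N)$ and $\zeta_N=O(\tfrac{\ln N}{N})$. If $\rho d_G>1$ the sum is dominated by the last term, $\asymp (\rho d_G)^{D}$; substituting $D\asymp \frac{\ln N}{\ln d_G}$ gives $(\rho d_G)^{D}=N^{\frac{\ln(\rho d_G)}{\ln d_G}}=N^{1+\frac{\ln\rho}{\ln d_G}}$, and dividing by $N-1$ yields $\zeta_N=O(N^{\frac{\ln\rho}{\ln d_G}})$. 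Since $\rho\in(1/d_G,1)$ forces $\frac{\ln\rho}{\ln d_G}\in(-1,0)$, this is indeed $o(1)$, matching the three cases in the statement.

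For part~\ref{item:polynomial}, the same layering gives, with $D=\mathrm{diam}(G)$,
\[
\sum_{j\ne i}|q_{ji}-q_{ij}| \;\le\; W_\infty\sum_{\ell=1}^{D} d_G^{\ell}\,\ell^{-\beta}.
\]
Here the geometric growth $d_G^{\ell}$ dominates the polynomial decay $\ell^{-\beta}$, so the sum is again controlled by its last block: it is $\asymp d_G^{D} D^{-\beta}$. Using $d_G^{D}\asymp N$ (the bounded-degree diameter/volume relation, so that $D\asymp \frac{\ln N}{\ln d_G}$ and hence $D^{-\beta}\asymp (\ln N)^{-\beta}$ up to the constant factor $(\ln d_G)^{\beta}$), we get $\sum_{j\ne i}|q_{ji}-q_{ij}|=O\!\big(N(\ln N)^{-\beta}\big)$, and dividing by $N-1$ gives $\zeta_N=O\big((\ln N)^{-\beta}\big)$; the extra $\ln\ln N$ factor appearing in the statement comes from being careful about the sub-leading layers $\ell<D$ (one sums $d_G^{\ell}\ell^{-\beta}$ over $\ell$ near $D$, which contributes a $\ln D\asymp \ln\ln N$ correction), so the final bound is $O\!\big(\tfrac{\ln\ln N}{(\ln N)^{\beta}}\big)$.

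The main obstacle is making the diameter/volume relation $D\asymp \frac{\ln N}{\ln d_G}$ precise and uniform: the upper bound $N\le 1+d_G\frac{(d_G-1)^{D}-1}{d_G-2}$ (hence $D\gtrsim\log_{d_G}N$) always holds, but we also need a matching lower bound on the growth of balls so that the ``last layer dominates'' heuristic is rigorous — concretely one needs $\#\{j:c(i,j)\le \ell\}$ to actually reach order $N$ at $\ell\asymp D$, and one must handle graphs that are far from regular (e.g.\ a path, where $d_G=2$ and $D=N-1$, versus an expander). The clean way around this is to observe that we never need the exact growth rate: the upper bound $\#\{j:c(i,j)=\ell\}\le d_G^{\ell}$ together with the trivial bound $\#\{j:c(i,j)=\ell\}\le N$ already sandwiches each layer, and summing $\min(d_G^\ell, N)\cdot(\text{decay in }\ell)$ over all $\ell\ge1$ — splitting at the crossover $\ell^\ast=\log_{d_G}N$ — gives all three exponential regimes and the $\ln\ln N/(\ln N)^\beta$ polynomial regime directly, without ever invoking a lower bound on ball growth or the precise diameter. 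This makes the argument robust to the graph geometry and is the version I would write up.
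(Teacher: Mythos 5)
Parts \ref{item:symmetry} and \ref{item:exponential} of your argument are essentially sound. For \ref{item:exponential}, your final ``$\min(d_G^\ell,N)$ with crossover at $\ell^*=\log_{d_G}N$'' version is a correct and clean alternative to the paper's proof: the paper instead rebalances the shortest-path tree rooted at $i$ into a complete $d_G$-ary tree of depth $L=\cO(\log_{d_G}N)$, notes that rebalancing only shortens distances (hence only increases the decreasing weights $\rho^{c(i,j)}$), and then sums $\sum_{\ell\le L}(\rho d_G)^\ell$ over the three regimes. Your crossover split delivers exactly the same three rates without any statement about the diameter, so it avoids the issue you correctly flag (the diameter of a bounded-degree graph can be $\Theta(N)$, e.g.\ a path, so ``$D\asymp\log_{d_G}N$'' and ``last layer dominates'' are not available in general).

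For part \ref{item:polynomial}, however, there is a genuine gap, and your proposed repair does not close it. The main text of your argument rests on $d_G^{D}\asymp N$ and domination by the layer $\ell=D$, which fails for non-expander geometries as you yourself note. The fallback you offer --- summing $\min(d_G^\ell,N)\,\ell^{-\beta}$ over all layers --- is too lossy in the polynomial regime: the tail layers $\ell>\ell^*$ contribute $N\sum_{\ell^*<\ell\le D}\ell^{-\beta}$, which after dividing by $N-1$ is of order $(\ln N)^{1-\beta}$ when $\beta>1$ (and of order $D^{1-\beta}$, possibly as large as $N^{1-\beta}$, when $\beta<1$), both far exceeding the claimed $\cO\bigl(\ln\ln N/(\ln N)^\beta\bigr)$; the point is that you may not charge up to $N$ vertices to \emph{every} layer beyond the crossover. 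The fix is to use mass conservation across layers: there are at most $N$ vertices at distance greater than $\ell^*$, each contributing at most $(\ell^*)^{-\beta}=\cO\bigl((\ln N)^{-\beta}\bigr)$, so the tail contributes $\cO\bigl(N(\ln N)^{-\beta}\bigr)$, while the near layers contribute $\sum_{\ell\le\ell^*}d_G^\ell\ell^{-\beta}=\cO\bigl(d_G^{\ell^*}/(\ell^*)^\beta\bigr)=\cO\bigl(N(\ln N)^{-\beta}\bigr)$ (up to a harmless $\ln\ln N$ factor if one estimates this sum crudely); dividing by $N-1$ then gives the stated rate. The paper implements precisely this idea via the tree-rebalancing device: all $N$ vertices are repacked, without increasing distances, into a complete $d_G$-ary tree of depth $L=\cO(\log_{d_G}N)$, after which $\zeta_N\le\frac{C}{N}\sum_{\ell=1}^{L}d_G^\ell\ell^{-\beta}=\cO\bigl(\ln\ln N\,(\ln N)^{-\beta}\bigr)$, handling all $\beta>0$ uniformly. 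Without this step (or the mass-conservation argument), your part \ref{item:polynomial} does not establish the claimed bound.
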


\begin{proof}
Let 
$\zeta_N = \frac{1}{N-1}  \max_{i\in [N]}\sum_{j\ne i} |q_{ji}- q_{ij}|$.
It is clear that 
      $\zeta_N =0$ under Condition \eqref{eq:symmetry_condition}, which proves Item \ref{item:symmetry}. 
      To prove Items \ref{item:exponential} and \ref{item:polynomial},
we assume without loss of generality that 
for all $i,j\in [N]$ with $i\not =j$, $c(i,j)<\infty$, since otherwise $|q_{ij}-q_{ji}|=0$ under 
Condition  \eqref{eq:exponential_decay} or Condition \eqref{eq:power_decay}. 

We first introduce the following  rebalancing technique for the underlying graph $G$: 
Fix node $i \in [N]$. Let $T_1 \subset G$ be the tree with node $i$ as its root.  $T_1$ contains the shortest path for each $j\ne i $ to the root $i$,
and denote  $c_1$ by the shortest-path distance in $T_1$, which satisfies  
$$c_1(i,j) = c(i,j), \quad \forall j \ne i. $$
We will
rebalance the tree $T_1$
   as follows to obtain a $d_G$-ary tree $T_2$, in which every node except those at the deepest level has exactly $d_G$ children:  starting from a node \( j \) that is farthest from the root, we traverse the tree (e.g., depth-first search or breadth-first search)  to move \( j \) to a higher level that is available, reducing its distance to the root \( i \).
We repeat this process until 
  no further adjustment can be made.   We denote $L+1$ as the number of levels in $T_2.$ Specifically, $L$ is the smallest integer that $1 + d_G + d_G^2 + \cdots + d_G^L \geq N $. So as   $N\to \infty$,  $N = \cO(d_G^L)$ and $L = \cO(\frac{\ln N}{\ln d_G}).$
Let $c_2$ denote the distance in $T_2$. Since the reblancing process shortens the distance between the nodes, $$c_2(i,j) \leq c_1(i,j) = c(i,j),
\quad j\not =i.$$

For Item \ref{item:exponential}, there exists a constant $C\ge 0$, which depends only on $(w_{ij})_{i,j\in [N]}$
and $d_G$, such that
\begin{align}\label{eq:bound_exp}
	   \zeta_N \leq \frac{C}{N} \max_{i \in [N]} \sum_{j \ne i} \rho^{c(i,j)}
   \leq \frac{C}{N} \sum_{\ell=1}^{L} \rho^{\ell} d_G^\ell,
\end{align}
where the first inequality follows from
  Condition  \eqref{eq:exponential_decay},
and 
 the last inequality uses $\rho\in (0,1)$ and   the rebalancing technique, which is an upper bound of the summation of weights in $T_2$.
 It remains to compute the right-hand side of \eqref{eq:bound_exp}.
If  $\rho d_G = 1$,
\begin{equation}\label{eq:exp_bound1}
    \zeta_N \le \frac{C L}{N} = \cO\left(\frac{\ln N}{N \ln d_G}\right).
\end{equation}
If $\rho d_G \ne 1$,
\begin{equation}\label{eq:exp_bound2}
    \zeta_N \le \begin{cases} C \frac{1}{d_G^L}   \rho d_G   \frac{(\rho d_G)^L - 1}{\rho d_G - 1}    \le C   \rho^L  = \cO \left(N ^{\frac{\ln \rho}{\ln d_G}}\right), &  \textnormal{if } \rho d_G >1, \\
         \frac{C}{N}\frac{\rho d_G}{1-\rho d_G}=\cO\left(N^{-1}\right), & \textnormal{if }   \rho  d_G < 1.
     \end{cases}
\end{equation}
Combining \eqref{eq:exp_bound1} and \eqref{eq:exp_bound2} finishes the proof for Item \ref{item:exponential}.

For Item \ref{item:polynomial},
fix $i\in [N]$,
 let \( n_\ell \) denote the number of nodes at distance \( \ell \) from the root in $T_1$. 
Then
under  Condition \eqref{eq:power_decay},
\begin{equation}
\label{eq:zeta_N_power}
\zeta_N\le \frac{2}{N}
\sum_{j\not =i} |q_{ij}-q_{ji}| \le  
\frac{2}{N}
\left(\max_{i,j\in [N]}|w_{ij}|\right)
\sum_{\ell=1}^N \frac{n_\ell}{\ell^\beta} \leq 2\left(\max_{i,j\in [N]}|w_{ij}| 
\right)\frac{1}{N}\sum_{\ell=1}^L \frac{d_G^\ell}{\ell^\beta},
\end{equation}  
where the last inequality provides 
an upper bound of $\sum_{\ell=1}^N \frac{n_\ell}{\ell^\beta}$ using the rebalanced tree $T_2$.
Observe that the function
$h(x)\coloneqq d_G^x/x^\beta$ 
has the derivative 
$h'(x)=\frac{d_G^x (x \ln d_G-\beta )}{x^{\beta+1}}$,
and   
is increasing on $(\beta/\ln d_G,\infty)$.
Hence for all $M\in \{1,\ldots, L\}$ with $M\ge \beta/\ln d_G$, 
\begin{equation}\label{eq:sum_power}
    \sum_{\ell =1}^L \frac{(d_G)^\ell}{\ell ^\beta } \leq \sum_{\ell = 1}^{L-M} \frac{(d_G)^\ell}{\ell ^\beta } + M  \frac{(d_G)^L}{L^\beta } 
\end{equation}
Since $x\to 1/x^\beta$ is deceasing on $(0,\infty)$, 
the first term on the right-hand side of \eqref{eq:sum_power}
can be upper bounded by 
$$\sum_{\ell=1}^{L-M} \frac{1}{ \ell^\beta} \leq 1 + \int_1^{L-M} \frac{1}{x^\beta} \d x =\begin{cases}
 1+ \frac{1}{\beta-1} \left(1- {(L-M)}^{(1-\beta)}\right), & \text{ if } \beta > 1, \\
  1+ \ln (L-M), & \text{ if } \beta = 1,
  \\
  1+ \frac{1}{1-\beta} \left( {(L-M)}^{(1-\beta)}-1\right), & \text{ if } 0<\beta < 1.
\end{cases}  $$
Thus for $\beta>1$, 
taking $M^* = \beta\left\lfloor\frac{\ln L}{\ln {d_G}}\right\rfloor$, which implies that $(d_G)^{M^*} = \cO(L^\beta)$ as $L\to \infty$. 
By \eqref{eq:sum_power},
\begin{equation}\label{eq:sum_power_beta>1}
    \sum_{\ell =1}^L \frac{(d_G)^\ell}{\ell ^\beta } \leq 
 C\left(  (d_G)^{L-M^*}   
    +\ln L 
    \frac{(d_G)^L}{L^\beta }
    \right)
    \le C   \ln L 
    \frac{(d_G)^L}{L^\beta }, 
\end{equation}
which along with \eqref{eq:zeta_N_power} shows that as $N\to \infty$,
$$
\zeta_N \le C
\frac{1}{N} \ln  L \frac{(d_G)^L}{L^\beta }
\le C \ln \ln N
\left(\frac{1}{\ln N}\right)^\beta.
$$
For $\beta=1$, 
taking $M^* =  \left\lfloor\frac{\ln L}{\ln {d_G}}\right\rfloor$,
which implies that $(d_G)^{M^*} = \cO(L)$ as $L\to \infty$.
By \eqref{eq:sum_power},
\begin{equation} 
    \sum_{\ell =1}^L \frac{(d_G)^\ell}{\ell ^\beta } 
    \le C\left(  (d_G)^{L-M^*} \ln L 
    +\ln L 
    \frac{(d_G)^L}{L }
    \right)
    \le C   \ln L 
    \frac{(d_G)^L}{L },
\end{equation}
which along with \eqref{eq:zeta_N_power} implies
$\zeta_N = \cO
\left(\ln \ln N
\left(\frac{1}{\ln N}\right)\right)$
  as $N\to \infty$. 
Similarly, 
for $\beta\in (0,1)$,
taking $M^* =  \left\lfloor\frac{\ln L}{\ln {d_G}}\right\rfloor$ and using 
\eqref{eq:sum_power} yield 
\begin{equation} 
    \sum_{\ell =1}^L \frac{(d_G)^\ell}{\ell ^\beta } 
    \le C\left(  (d_G)^{L-M^*} L^{1-\beta}  
    +\ln L 
    \frac{(d_G)^L}{L^\beta }
    \right)
    \le C   \ln L 
    \frac{(d_G)^L}{L^\beta },
\end{equation}
which along with \eqref{eq:zeta_N_power} implies
$\zeta_N = \cO
\left(\ln \ln N
\left(\frac{1}{\ln N}\right)^\beta\right)$
  as $N\to \infty$. This completes the proof. 
\end{proof}

\subsection{Numerical results for NEs}

We apply Algorithm \ref{alg:PG}
to compute the NEs in  the 
crowd motion game  \eqref{eq: cost_ex}--\eqref{eq: dynamics_ex}.  
For ease of exposition,
we consider a four-player game  (i.e., $N=4$), 
where 
each player has two-dimensional state and control processes (i.e., $d=k=2$ and $A_i=\sR^2$).
Player $i$'s state dynamics is given by 
\begin{equation}
\label{eq:state_numerics}
    \begin{aligned}
         \d X_{i,t}^{u_i} 
      &=  u_{i,t}  \d t + \sigma_i  \d W^i_t +  \gamma_i  \d \widetilde \eta_{i,t} 
      + \gamma_0  \d \widetilde \eta_{0,t} 
      , \quad X_{i,0} = x_{i,0},
    \end{aligned}
\end{equation}
where 
$\sigma_i,\gamma_i, \gamma_0\ge 0$
are given constants, 
$W^i$ an $ \tilde \eta_i$
are 
two-dimensional Brownian motion and compensated Poisson processes, respectively, representing the idiosyncratic noise for player $i$, 
and $\tilde \eta_0$ is an independent two-dimensional compensated Poisson process modeling the common noise shared by all players. 
The process $\tilde \eta_i$ has a constant intensity $\lambda_i$, 
with $\lambda_0 = 0.25$, $\lambda_1 =0.3$, and   $\lambda_i = 0.2$ for all $i \geq 2$.
Player $i$ 
considers minimizing the objective   \eqref{eq: cost_ex} with the 
terminal time $T=1$, and terminal cost 
\begin{equation}\label{eq:g}
    g_i(x) = c_i |x- z_i|^2,
\end{equation}
where 
$c_i>0$ is a given constant,
and
$z_i\in \sR^d$ is the   target that  player $i$ aims to reach at time $T$.
The running cost $\ell_i$, the kernel $K$ and the interaction weights $(q_{ij})_{i,j=1}^N$ will be specified below. 
Algorithm~\ref{alg:PG} is implemented using neural network-based policies, with the detailed architecture and training procedures described in Appendix~\ref{sec:implmentation}.

\subsubsection{Aversion Games with Idiosyncratic Noises.}

  We first consider a crowd-aversion game in which all players are subject only to idiosyncratic noise. Specifically, we set $\sigma_i = 0.1(i-1)/N$, {$\gamma_i =0.1 $}, and $\gamma_0 = 0$ in \eqref{eq:state_numerics}. All players start from the same initial location  $x_{i,0} = (0,0)$, and aim to reach a common terminal location  $z_i = (0.5, 0.5)$. The terminal cost function $g_i$ is given by \eqref{eq:g}  with   $c_i = 1$, and the running cost $\ell_i$ on control is   $\ell_i(a) = \frac{0.1}{2} |a|^2$.
To model crowd-aversion effects, we adopt the Gaussian kernel $K(z) = 100 \exp(-100 |z|^2)$, and assume uniform interaction weights $q_{ij} = 1$ in \eqref{eq: cost_ex}, representing symmetric aversion among all players.
The resulting crowd motion game  is a potential game 
as shown in Corollary \ref{cor:crowd_interaction_alpha}.

 Figure~\ref{fig:aversion} illustrates the equilibrium trajectories of the players,
 where positions at times $t = 0.25, 0.5, 0.75$ are marked by symbols 1, 2, and 3, respectively. The left panel shows the mean positions computed over $500$ sample trajectories, while the right panel presents a representative single-sample trajectory.
 
All players begin at the same initial location (indicated by a red circle at position $(0, 0)$) and aim to reach a common target (marked by a red cross at $(0.5, 0.5)$). Early in the game, players disperse in different directions to reduce crowding, a behavior induced by the pairwise aversion term in the cost function. Notably, Player 4 takes a wide detour to avoid other players before converging near the destination. The group exhibits loose coordination: although all players share the same goal, their individual trajectories reflect mutual avoidance dynamics.

\begin{figure}[!ht]
    \centering
     \includegraphics[width=0.50\linewidth]{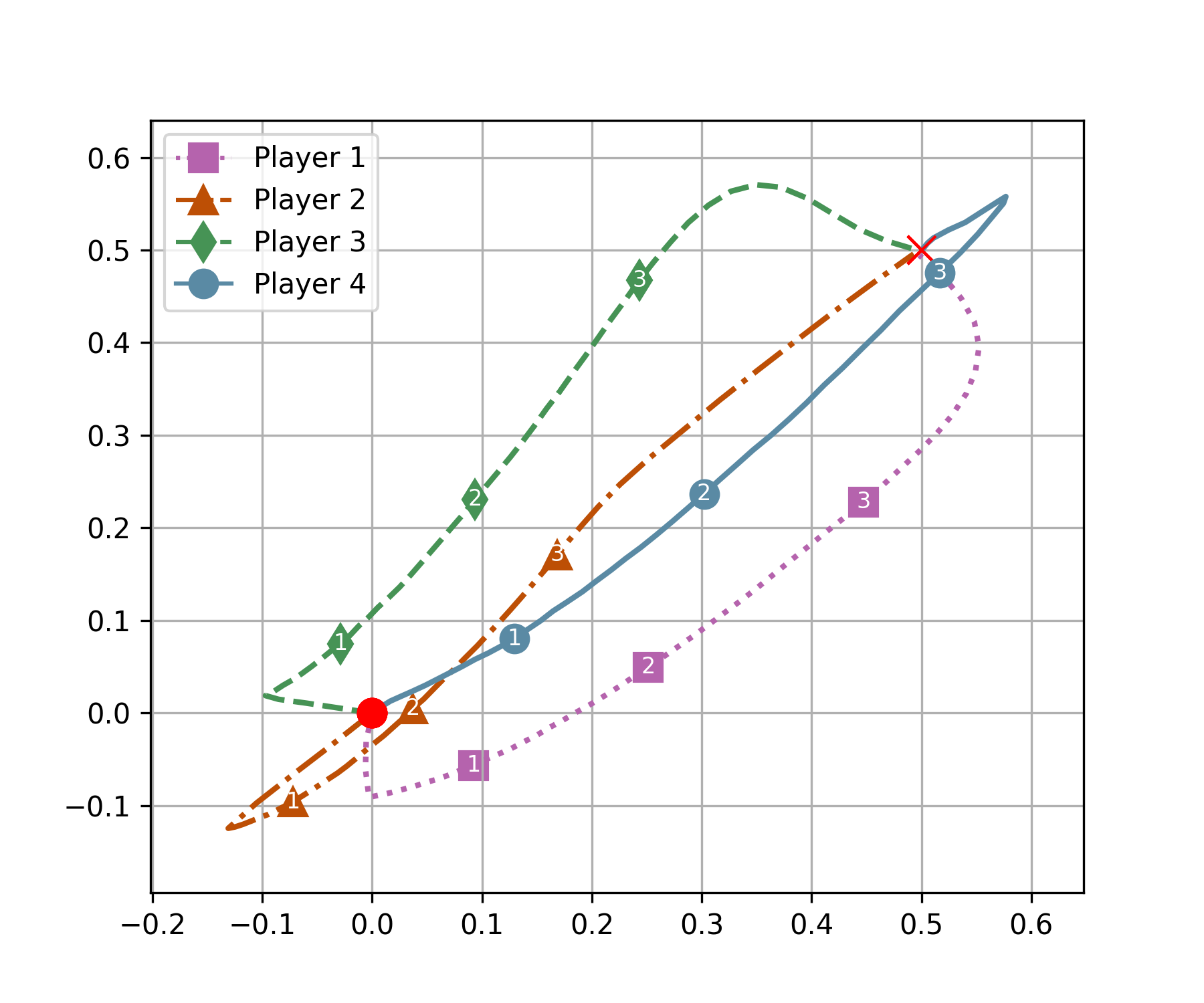}
     \hspace{-10mm}
     \includegraphics[width=0.50\linewidth]{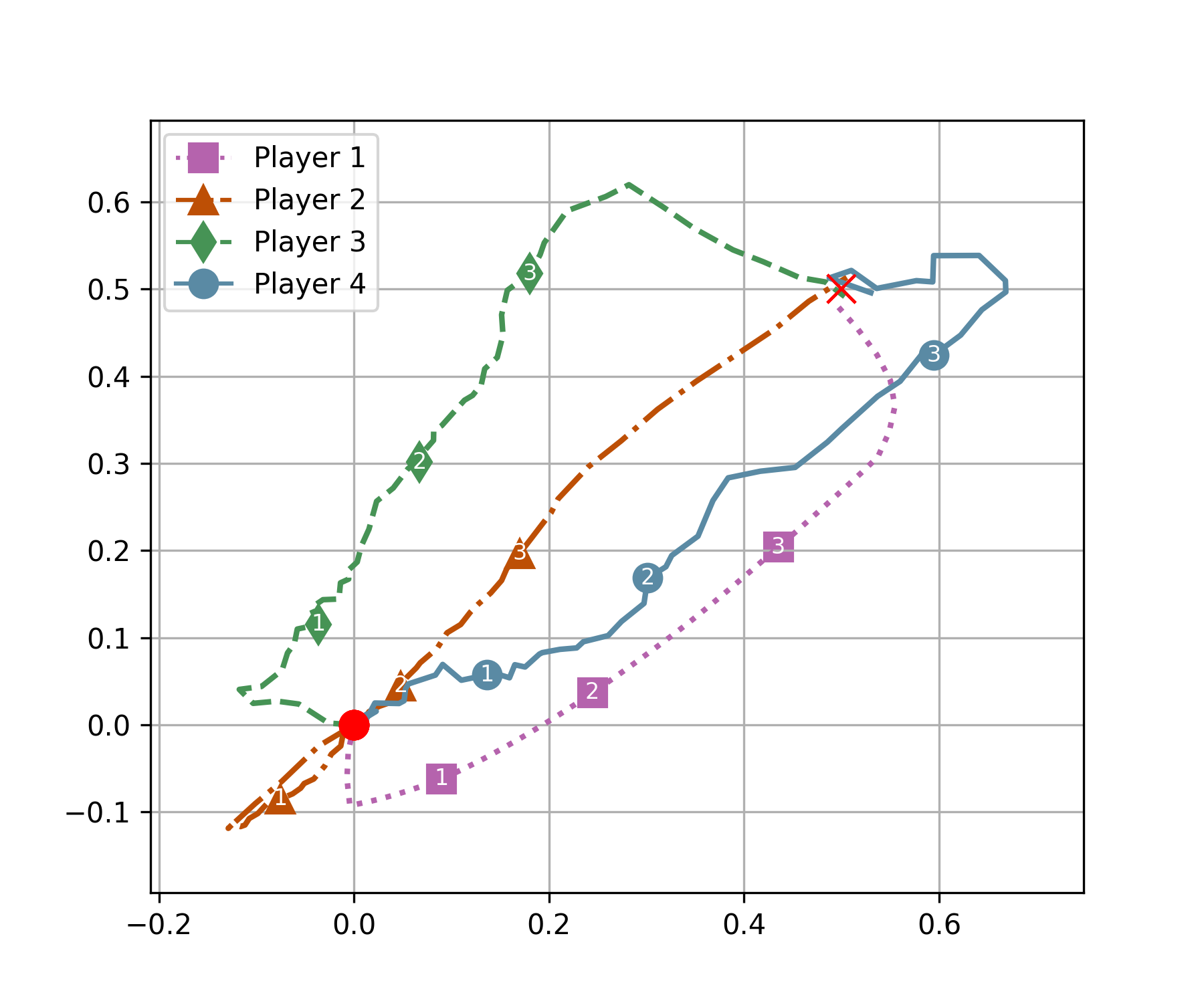}
    \caption{Equilibrium trajectories in the aversion game with a Gaussian kernel  and uniform interaction weights. Left: mean positions over 500 simulations. Right: one representative trajectory. The solid circle denotes the shared initial position; the cross marks the common target. Markers ``1", ``2", and ``3" indicate positions at times $0.25$, $0.5$, and $0.75$, respectively.}
    \label{fig:aversion}
\end{figure}

\subsubsection{Flocking Games with Idiosyncratic Noises.}

The second example considers a flocking game where all players start from the same initial location $x_{i,0} = (0,0)$, and aim for distinct individual target: $(0.25, 0)$, $(0, 0.5)$, $(-0.5, 0)$, and $(0, -1)$. 
Each player is influenced only by idiosyncratic noise, with parameters set as
$\sigma_i=0.1 (i-1)/N $, 
$\gamma_i =0.1$, and $\gamma_0 = 0$ in \eqref{eq:state_numerics}.
The flocking behavior is modeled using the quadratic kernel  $K(z) = \frac{1}{2} |z|^2$. Each player $i$ incurs a running cost on control given by $\ell_i(a) = \frac{0.1}{2} |a|^2$, and a terminal cost defined by \eqref{eq:g}, with $c_i = 40$.

We consider two different settings for the interaction weights $(q_{ij})_{i,j=1}^N$ in \eqref{eq: cost_ex}. In the first setting, uniform interaction is assumed, with $q_{ij} = 1$ for all $i \neq j$, so that each player is equally influenced by every other player. In the second setting, a two-group structure is imposed: players 2 and 3 form one group, and players 1 and 4 form another. In this case, $q_{ij} = 1$ if players $i$ and $j$ belong to the same group, and $q_{ij} = 0$ otherwise. This models selective flocking behavior, where players tend to coordinate only with those in their own group.

Figure~\ref{fig:flock_avg} shows the equilibrium trajectories under uniform interaction weights. In this case, the group first aggregates toward a common intermediate point and, after time $t=0.5$, the players begin to diverge toward their individual destinations. In contrast, Figure~\ref{fig:flock_two} presents the equilibrium trajectories under the two-group interaction structure. Here, each subgroup converges toward a distinct intermediate point, illustrating that the interaction structure encoded in $(q_{ij})_{i,j=1}^N$
   has a significant impact on both the alignment dynamics and the overall configuration of the players.

\begin{figure}[!ht]
    \centering
     \includegraphics[width=0.50\linewidth]{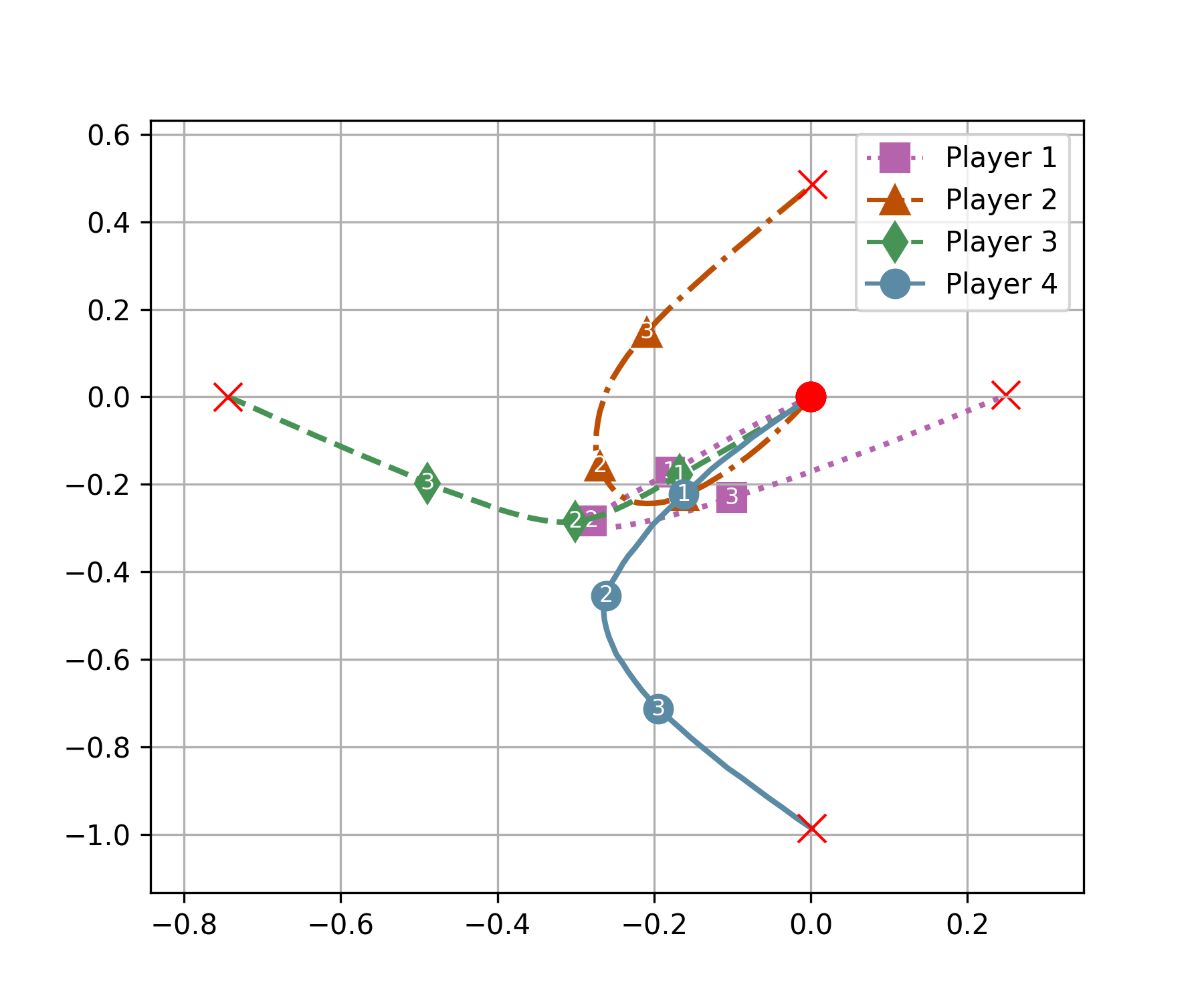}
         \hspace{-10mm}
       \includegraphics[width=0.50\linewidth]{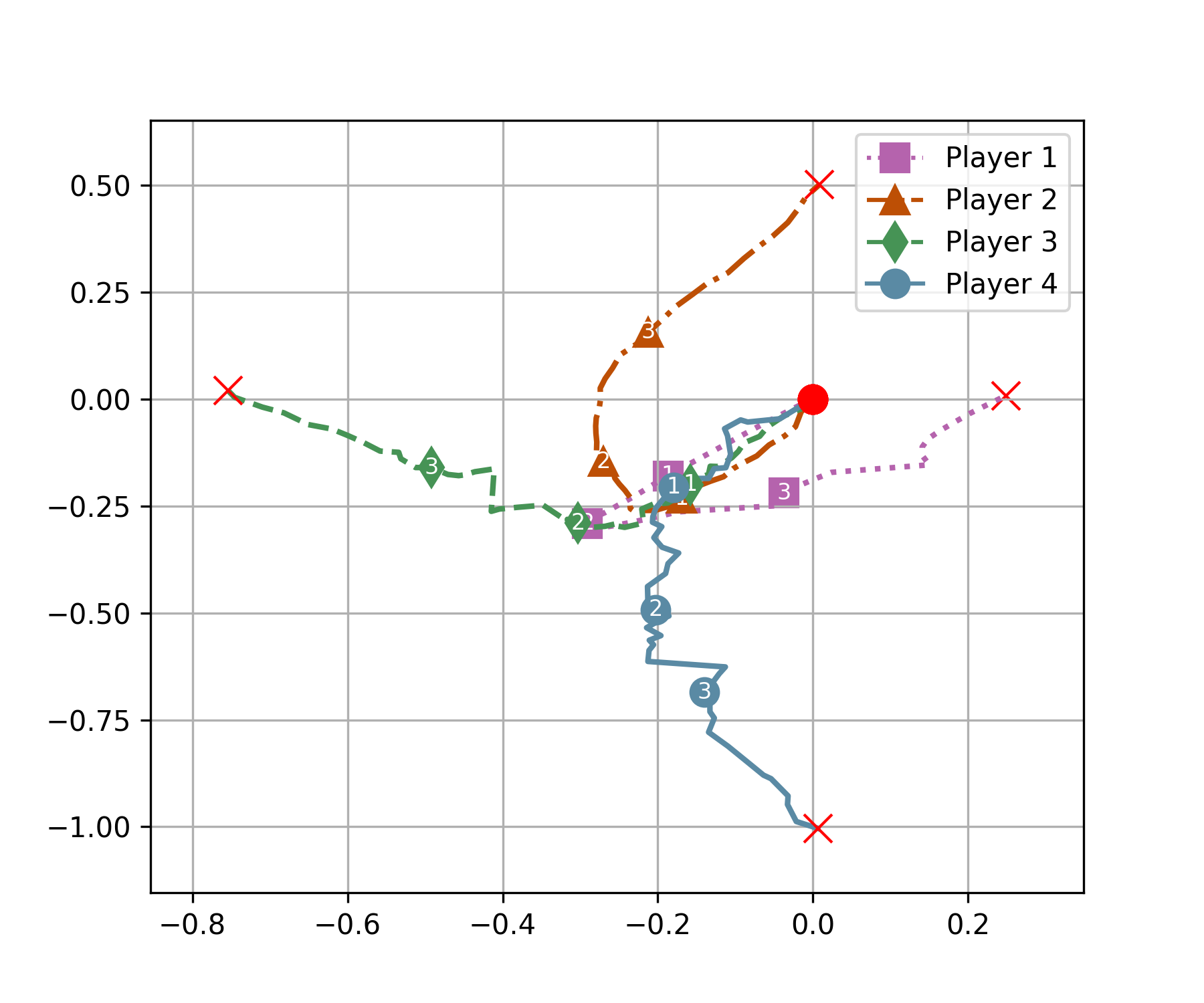}
                \caption{Equilibrium trajectories in the flocking game with a quadratic kernel and uniform interaction weights. Left: mean positions over 500 simulations. Right: one representative trajectory. The solid circle denotes the shared initial position; the crosses mark the individual targets. Markers ``1", ``2", and ``3" indicate positions at times 0.25, 0.5, and 0.75, respectively.}
    \label{fig:flock_avg}
 \end{figure}

 \begin{figure}[!ht]
    \includegraphics[width=0.50\linewidth]{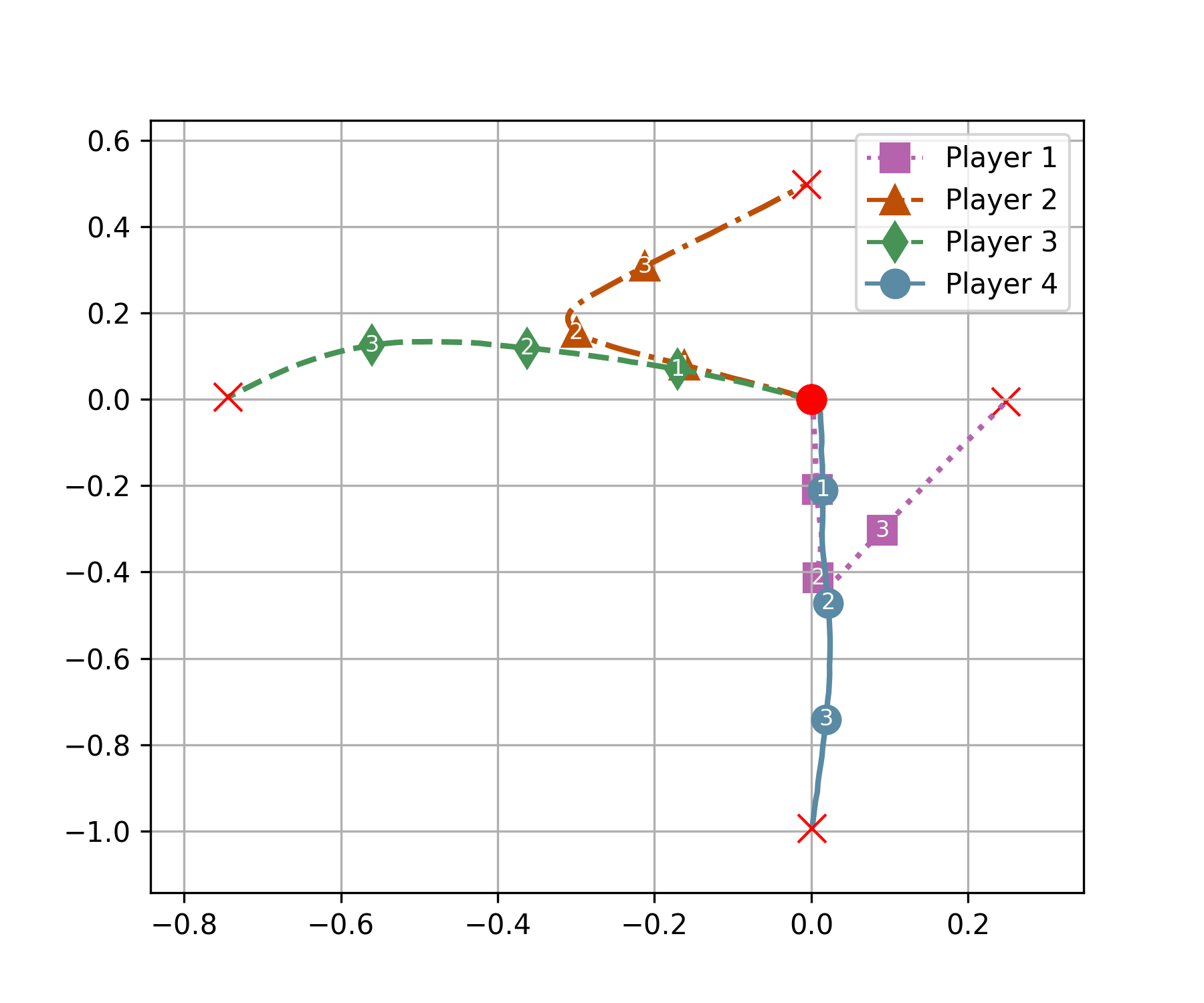}
        \hspace{-10mm}
       \includegraphics[width=0.50\linewidth]{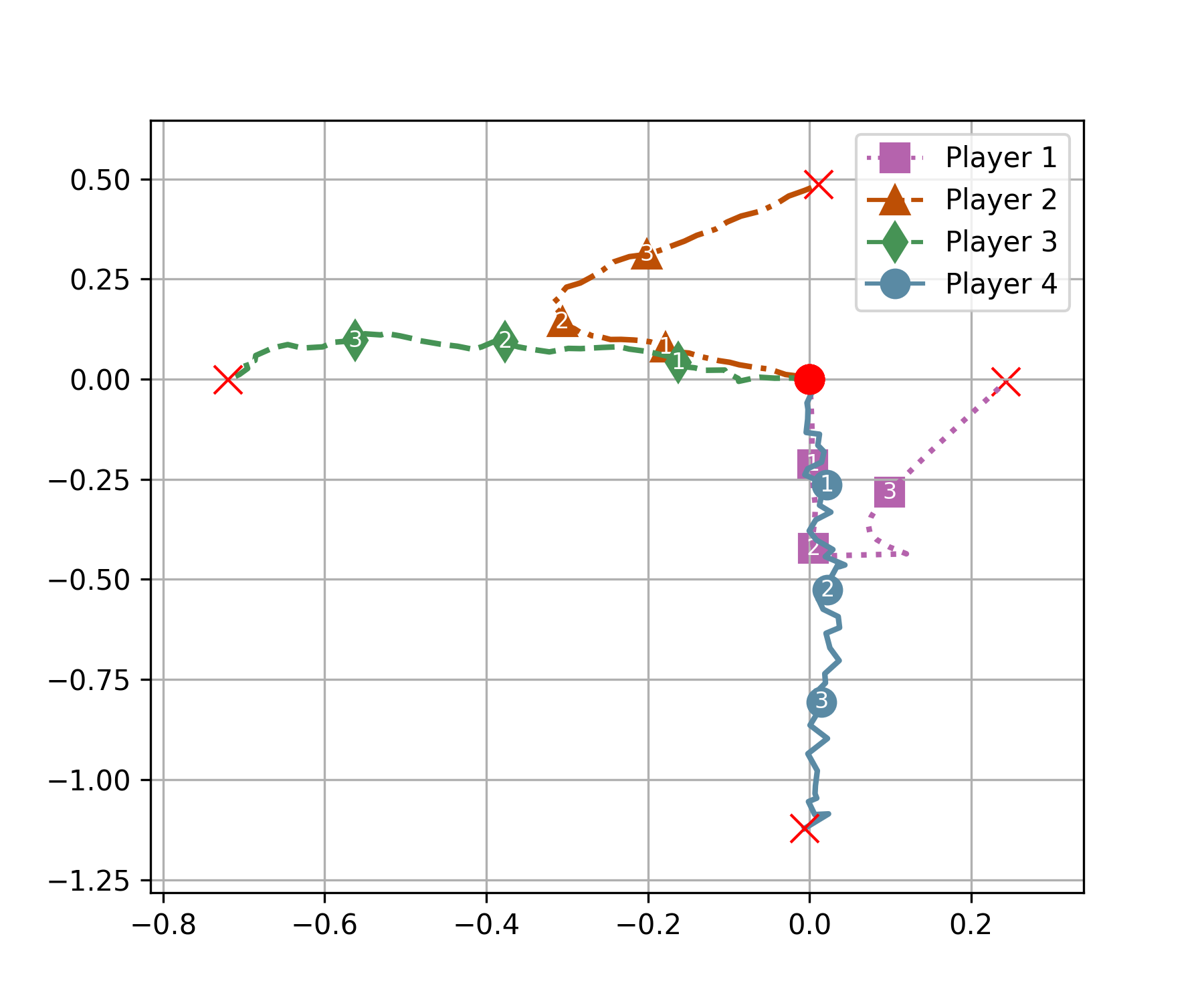}
    \caption{Equilibrium trajectories in the flocking game with a quadratic kernel and group-based interaction weights. Players 1 and 4 belong to one group, and players 2 and 3 form the other. The interaction weights $q_{ij} = 1$  if players $i$ and $j$ are in the same group, and $q_{ij} = 0$ otherwise. Left: mean positions over 500 simulations. Right: one representative trajectory. The solid circle denotes the shared initial position; the crosses mark the individual targets. Markers ``1", ``2", and ``3" indicate positions at times 0.25, 0.5, and 0.75, respectively.}
    \label{fig:flock_two}
\end{figure}

\subsubsection{Flocking Games with Common Noises.}

To demonstrate the flexibility of our framework, we consider a flocking game driven solely by common jumps. Specifically, we set $\sigma_i  =\gamma_i = 0$, and $\gamma_0 = 0.1 $, so that only common noise influences the dynamics. All other model parameters are identical to those in the previous flocking game with uniform interaction weights.

Figure~\ref{fig:flock_common_jump} presents two sample trajectories of the resulting equilibrium dynamics. The common jumps introduce abrupt, synchronized shifts in the players’ positions, followed by realignment as they continue moving toward their respective targets. While the jump events cause irregularities in the intermediate paths, the overall flocking behavior remains consistent with the patterns observed in Figure \ref{fig:flock_avg} for the setting with purely idiosyncratic noise.

\begin{figure}
    \centering
    \includegraphics[width=0.50\linewidth]{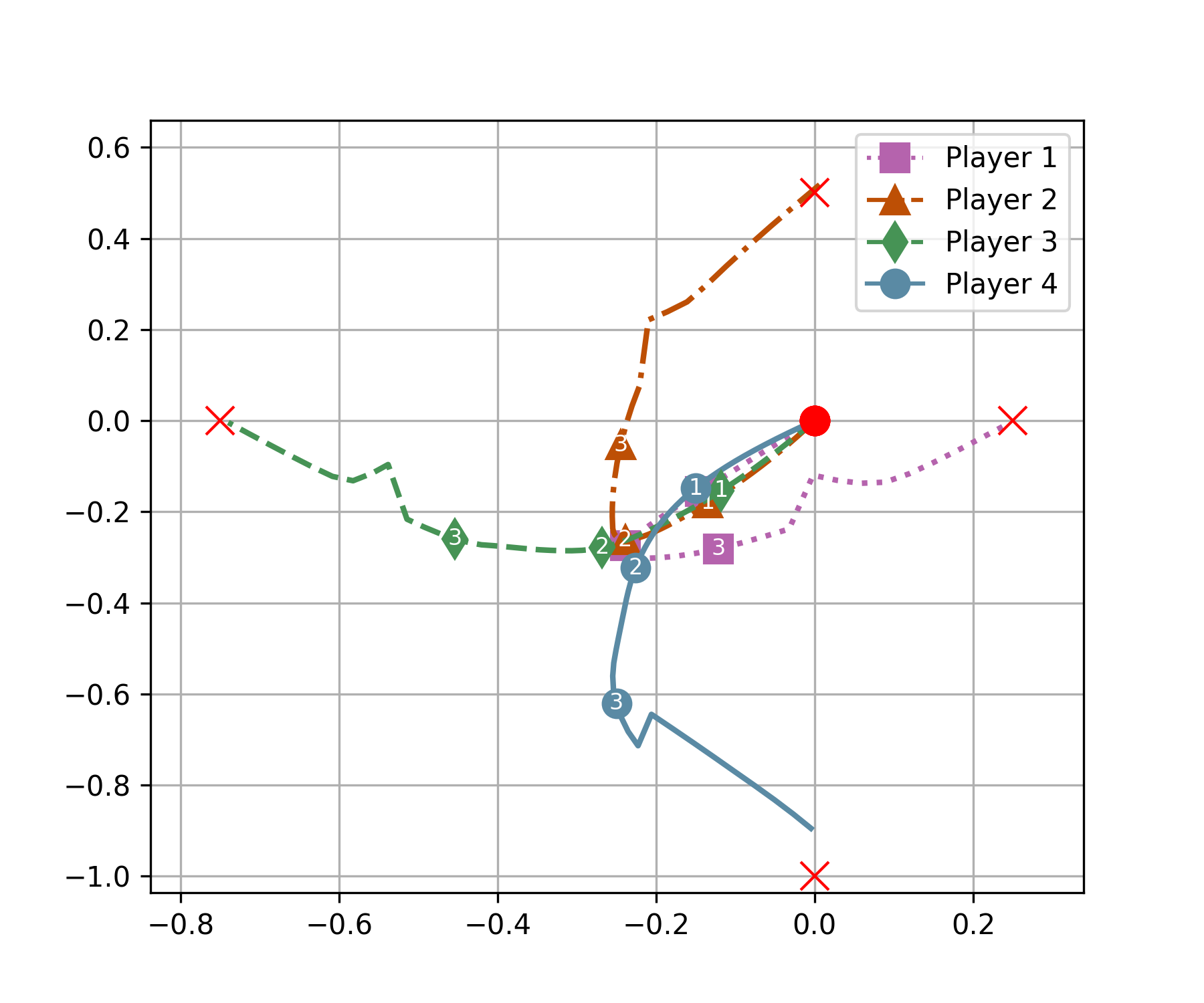}
       \hspace{-10mm} \includegraphics[width=0.50\linewidth]{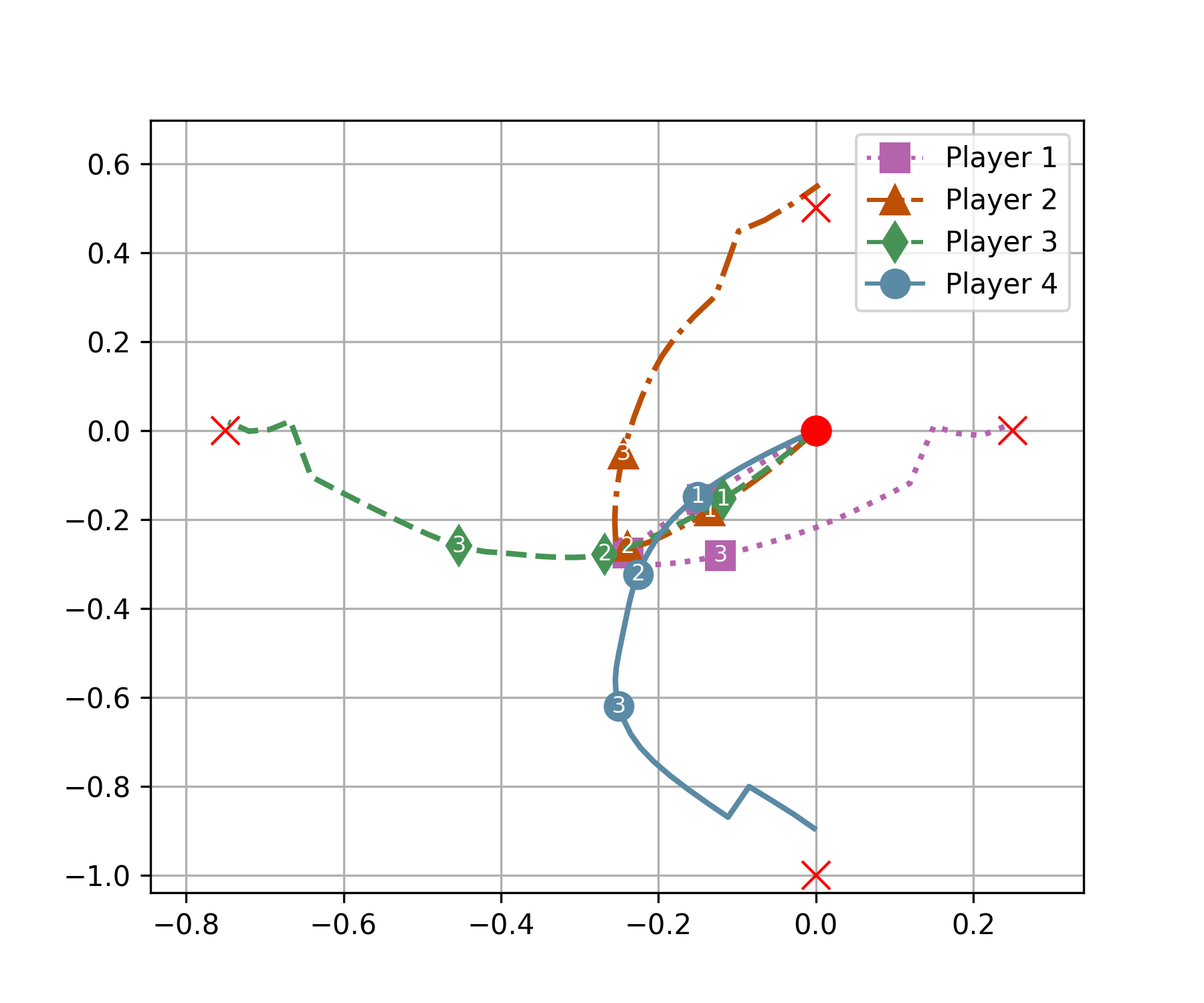}
    \caption{Equilibrium trajectories in the flocking game with a quadratic kernel, uniform interaction weights, and pure common jumps. The solid circle denotes the shared initial position; the crosses mark the individual targets. 
Markers ``1", ``2", and ``3" indicate positions at times 0.25, 0.5, and 0.75, respectively.}
    \label{fig:flock_common_jump}
\end{figure}

{\color{blue}

\section{Application to Mean-Variance Portfolio   Games}

In this section, we exploit our framework to analyze a portfolio selection game under a mean–variance (MV) criterion.
In this game, both the diffusion coefficient and the jump intensity of each player’s state dynamics are controlled, and each player’s objective function depends nonlinearly on the distribution of all players’ states. 
 
\subsection{Problem Setup}
Consider a financial market with $N$ players 
trading in a common risk-free asset  with interest rate $r>0$ and $d$ risky assets,
whose returns are driven by an $m$-dimensional Brownian motion and an independent compensated Poisson random measures $\widetilde{\eta}$, as introduced in Section \ref{sec:math_setup}.
The 
price of the risky stocks follows according to the following  dynamics: 
\begin{equation}
\label{eq:stock}
\mathrm{d} S_t=\operatorname{diag}\left(S_t\right)\left(\mu_t \mathrm{d} t+\sigma_t\mathrm{d} W_t
+\int_{\sR^p_0}\gamma (t,z)\widetilde{\eta}(\d t,\d z)
\right), \quad 
\end{equation}
where  $\operatorname{diag}(x)\in \sR^{d\times d}$ is the  diagonal matrix with diagonal entries $x \in \mathbb{R}^d$,  $
\mu:[0,T]\to \mathbb{R}^d$
and $\sigma:[0,T]\to \mathbb{R}^{d\times m} $ are bounded measurable functions,
and 
$\gamma : [0,T] \times \sR^p \ra \sR^{d }$
is a measurable function satisfying 
$\sup_{(t,z)\in [0,T]\times \sR^p_0} {|\gamma(t,z)|}/{\min(1,  |z|)}<\infty$.

For each $i\in [N]$, the set $\mathcal{A}_i$ of player $i$’s  admissible controls consist of all $\mathbb{F}$-adapted, square-integrable processes
$ 
u_i : \Omega \times [0,T] \to \mathbb{R}^d
$, 
where $u_{i,t}$ represents the number of dollars player $i$ holds in each asset at time $t$.
For each $u_i \in \mathcal{A}_i$, let ${X}_{i}^{u_i}$ denote the \emph{discounted} wealth of player $i$, starting from the initial position $x_i>0$ and following the investment strategy $u_i$. The dynamics of $ {X}_{i}^{u_i}$ are given by 
\begin{equation}
\label{eq:MV_state_original}
\mathrm{d} {X}_{i,t}=u_{i,t}^\top \left((\mu_t -r) \mathrm{d} t+\sigma_t \mathrm{d} W_t 
+ \int_{\sR^p_0}\gamma (t,z)\widetilde{\eta}(\d t,\d z)
\right), \quad t\in (0,T]; \quad {X}_{i,0} =x_i.
\end{equation}
Each player aims to outperform the others in terms of relative wealth, according to an MV preference.
Specifically, 
player $i$ aims  to maximize the following  objective function $J_i:\cA\to \sR$;
\begin{equation}\label{eq:MV_objective_original}
     J_i(\bu) = \gamma_i \E \left[  {{X}^{u_i}_{i,T}}  -  \sum_{j\in [N]\setminus\{i\}} 
    \lambda^M_{ij}  {{X}^{u_j}_{j,T}}\right] - \frac{1}{2} \var \left(
     {{X}^{u_i}_{i,T}} - 
  \sum_{j\in [N]\setminus\{i\}} 
   \lambda^V_{ij} {X}^{u_j}_{j,T}
    \right),
\end{equation}
where $\gamma_i>0$  is  player $i$’s 
 risk aversion parameter,  and $\lambda^M_{ij},\lambda^V_{ij}\ge 0$ specify  player $i$'s
relative preference between their own wealth and other competitors. 
For the sake of exposition, we assume that the weights $(\lambda^V_{ij})_{i,j\in [N]}$ of rank one: for all $i,j\in [N]$,
\begin{equation}
\label{eq:lambda_V}
\lambda^V_{ij}=\theta_i\varphi_j, \quad \textnormal{for some $\theta_i,\varphi_j>0$}.
\end{equation}
Such an interaction structure includes as special cases the MV portfolio selection games analyzed in \cite{shao2025competitive, huang2025partial}, where
  $\varphi_j=1/(N-1)$ for all $j$,
  and $\lambda^M_{ij} =\theta_i/(N-1)$ for all $i,j\in [N]$.

\subsection{Potential Function and NE}
An NE of the game \eqref{eq:MV_state_original}–\eqref{eq:MV_objective_original} is  defined analogously to Definition \ref{def:epsilon_NE}.
To construct a potential function for 
\eqref{eq:MV_state_original}–\eqref{eq:MV_objective_original},
we observe that positive scaling of the objective and shifting the state dynamics by the (constant) initial conditions preserve the set of NEs.

\begin{lemma}
\label{lemma:MV_rescale}
    A control profile $\bu \in \cA$ is an NE of the game \eqref{eq:MV_state_original}–\eqref{eq:MV_objective_original}
    if and only if it is an NE for the   objectives $(\tilde J_i)_{i\in [N]}$ given by 
    \begin{equation}
    \label{eq:MV_rescale}
       \tilde J_i(\bu)\coloneqq \frac{\varphi_i}{\theta_i}\left(
       \gamma_i \E \left[  {\tilde {X}^{u_i}_{i,T}}  -  \sum_{j\in [N]\setminus\{i\}} 
    \lambda^M_{ij}  {\tilde {X}^{u_j}_{j,T}}\right] - \frac{1}{2} \var \left(
     {\tilde {X}^{u_i}_{i,T}} - 
  \theta_i\sum_{j\in [N]\setminus\{i\}} 
  \varphi_j
    \tilde {X}^{u_j}_{j,T}
    \right)\right),
    \end{equation}
    where 
    $ \tilde X_i^{u_i} \coloneqq X_i^{u_i}-x_i$ for all $i\in [N]$.
\end{lemma}

We now show that the rescaled objectives \eqref{eq:MV_rescale} form a potential game.
The proof exploits that 
the shifted state process $\tilde X^{u_i}_i$
and its sensitivity process satisfy a separation principle \eqref{eq:mv_separation} analogous to Lemma \ref{lemma:state_decomposition}, which allows us to construct a simplified potential function with finite-dimensional state processes.  


\begin{prop}
\label{prop:potential_mv}
    The game with  the   objectives $( J_i)_{i\in [N]}$ given in  \eqref{eq:MV_rescale}
    is a potential game with the potential function
    \begin{equation}
        \begin{aligned}
    \Phi(\bu) =  \beta^\top \sE \left[\tilde {\bX}^\bu_T \right] - 
    \frac{1}{2}  \sE \left[(\tilde {\bX}^\bu_T)^\top \Psi \tilde  {\bX}^\bu_T \right] + \frac{1}{2} \sE\left[\tilde {\bX}^\bu_T\right]^\top \Psi \sE\left[\tilde {\bX}^\bu_T\right],
    \end{aligned}
\end{equation}
 where 
 $\tilde{ \bX}^{\bu}=(\tilde  X_i^{u_i} )_{i\in [N]}$
 is given in Lemma \ref{lemma:MV_rescale},
 and 
 $\beta\in \sR^N$ and $\Psi\in \sR^{N\times N}$
 satisfy  
 for all $i\in [N]$,
\begin{equation}
\label{eq:MV_weight}
  \beta_i = \frac{\varphi_i}{\theta_i}\gamma_i,
  \quad 
  \Psi_{ij}=\begin{cases}
   \frac{\varphi_i}{\theta_i}, & j=i,
   \\
   -\varphi_i \varphi_j,
   &j\in [N]\setminus \{i\}.
  \end{cases}
\end{equation}
  
\end{prop}

\begin{proof}
For all $i
\in [N]$,
by \cite[Equation (3.3)]{guo2024alpha}, one can show that 
the  linear derivative of $  \tilde  J_i$ with respect to $u_i$ is given by 
\begin{align}\label{eq:dJ_du}
    \frac{\delta \tilde  {J}_i}{\delta u_i}(\bu; u_i') = 
    \frac{\varphi_i}{\theta_i}
    \left(
    \gamma_i \E\left[Y_{i,T}^{u_i'}\right] -  \sE \left[  
    \left(
     {\tilde {X}^{u_i}_{i,T}} - 
  \theta_i\sum_{j\in [N]\setminus\{i\}} 
  \varphi_j
    \tilde  {X}^{u_j}_{j,T}
    \right)  \left(Y_{i,T}^{u_i'} - \sE\left[ Y_{i,T}^{u_i'}\right]\right)\right]
    \right),
\end{align}
where for all $u'_i\in \cA_i$,
\begin{align} 
\label{eq:Y_h_i_open}
\begin{split}
    \d {Y}^{u'_i}_{i,t}
    &=
  (u'_{i,t})^\top \left( (\mu_t -r) \d t + \sigma \d W_t
  +\int_{\sR^p_0}\gamma (t,z)\widetilde{\eta}(\d t,\d z)\right),
  \quad t\in (0,T];
    \quad {Y}^{u_i'}_{0,i}=0.
 \end{split}
\end{align} 
Moreover, for all $i,j\in [N]$ with $i\not =j$,
$u'_i\in \cA_i$, and 
$u''_j\in \cA_j$,
\begin{align}\label{eq:dJ_du}
    \frac{\delta^2  \tilde  J_i}{\delta u_i \delta{u_j}}
(\bu; u_i', u_j'') = 
    {\varphi_i  \varphi_j} 
  \left   ( \sE \left[  
     Y_{i,T}^{u_i'} {Y}^{u''_j}_{j,T}
    \right] - \sE \left[ Y_{i,T}^{u_i'} \right]
    \sE \left[  {Y}^{u''_j}_{j,T}  \right]
    \right) .
\end{align}
Note that $\frac{\delta^2 \tilde  J_i}{\delta u_i \delta{u_j}}
(\bu; u_i', u_j'') =\frac{\delta^2 \tilde  J_j}{\delta u_i \delta{u_j}}
(\bu; u_i', u_j'') $, which along with \cite[Theorem 2.5]{guo2024alpha} implies that 
the   objectives $(\tilde J_i)_{i\in [N]}$ given in  \eqref{eq:MV_rescale}
    forms  a potential game with a potential function $\Phi:\cA\to \sR$
given  by (cf.~\eqref{eq:potential_general}):
 \begin{align}
 \label{eq:mv_potential_1}
    \Phi(\bu)&=\int_0^1 \sum_{i=1}^N   \frac{\delta \tilde  J_i}{\delta u_i}\left(s\bu ; u_i \right) \d s.
\end{align}
To   simplify the   expression \eqref{eq:mv_potential_1}  of the potential function $\Phi$,
observe that for all $i\in [N]$ and  $u_i\in\cA_i $,
\begin{equation}
\label{eq:mv_separation}
\tilde {X}^{u_i}_{i,t}={Y}^{u_i}_{i,t}, \quad  
 \tilde {X}^{s u_i}_{i,t}=s\tilde {X}^{ u_i}_{i,t},\quad \forall s\in [0,1].
 \end{equation}
 Substituting 
 \eqref{eq:mv_potential_1},
we obtain
\begin{equation}\label{eq:potential_MV}
    \begin{aligned}
            \Phi(\bu)
            & =\sum_{i=1}^N 
             \frac{\varphi_i}{\theta_i}
    \left(
    \gamma_i \E\left[\tilde {X}^{u_i}_{i,T}\right] -  \frac{1}{2}\sE \left[  
    \left(
     {\tilde {X}^{u_i}_{i,T}} - 
  \theta_i\sum_{j\in [N]\setminus\{i\}} 
  \varphi_j
   \tilde  {X}^{u_j}_{j,T}
    \right) \left( \tilde {X}^{u_i}_{i,T} - \sE\left[ \tilde {X}^{u_i}_{i,T} \right] \right)\right]
    \right)
    \\
    &= \beta^\top \sE \left[\tilde {\bX}^\bu_T \right] - 
    \frac{1}{2}  \sE \left[(\tilde {\bX}^\bu_T)^\top \Psi \tilde {\bX}^\bu_T \right] + \frac{1}{2} \sE\left[\tilde {\bX}^\bu_T\right]^\top \Psi \sE\left[\tilde {\bX}^\bu_T\right].
    \end{aligned}
\end{equation}
\end{proof}

Based on Lemma  
   \ref{lemma:MV_rescale} 
and Proposition \ref{prop:potential_mv},
the following theorem constructs an NE of the portfolio selection game. 
To simplify the notation, we define for each $t\in [0,T]$, 
$$
b_t\coloneqq\mu_t -r,
\quad 
\Sigma_t \coloneqq \sigma_t\sigma_t^\top
+\int_{\sR^p_0}\gamma(t,z)\gamma^\top(t,z) \nu(\d z),
$$
where $\nu$ is the compensator  of the Poisson random measure $\eta$. 
The functions $b$ and $\Sigma$ denote the excess return rate and its covariance matrix of the risky assets, respectively.

\begin{theorem} 
\label{thm:potential_maximizer}
Suppose that $     \sum_{i=1}^N \frac{\varphi_i \theta_i}{1+\varphi_i\theta_i} < 1
$  for $(\varphi_i, \theta_i)_{i\in [N]}$  given in \eqref{eq:lambda_V},
and 
for each $t\in [0,T]$,
 $\Sigma^{-1}_t$ exists and   is  uniformly bounded  in $t$.
Define the function
$  a^*: [0,T]\times \sR^N\times \sR^N \to \sR^{d\times N}$ by
\begin{equation}
\label{eq:MV_control_explicit}
  a^* (t, x,y)=
   -\Sigma^{-1}_t 
 b_t \left( (x  - y  )^\top  -  \beta^\top  \Psi^{-1}  \exp\left(\int_t^T  b_s^\top  \Sigma_s^{-1}   b_s \d s\right)\right),
\end{equation}
where $\beta$ and $\Psi$
are given in \eqref{eq:MV_weight},
and   define 
$\bu^*\in \cA$
by 
$\bu_t^* =a^*(t,  \bX^{*}_t, \sE[\bX^{*}_t])$,
where $\bX^{*}$ satisfies the following dynamics:   
\begin{equation}
\label{eq:mv_optimal_state}
\d \bX_t  = 
a^*(t, \bX_t, \sE[\bX_t]) ^\top \left(  b_t  \, \d t +  \sigma_t \, \d W_t
+\int_{\sR^p_0}\gamma (t,z)\widetilde{\eta}(\d t,\d z)\right),
\quad 
\bX_0=(x_1,\ldots,x_N)^\top.
\end{equation}
Then $\bu^*\in \cA$ is an  NE 
of the game \eqref{eq:MV_state_original}–\eqref{eq:MV_objective_original}.

\end{theorem}
\begin{proof}

We first claim that the condition 
$     \sum_{i=1}^N \frac{\varphi_i \theta_i}{1+\varphi_i\theta_i} < 1
$  
is equivalent to the positive definiteness of   $\Psi \in \sR^{N\times N}$ given in \eqref{eq:MV_weight}. 
Indeed, 
$\Psi=D-{\psi}{\psi}^\top$,
where $D=\operatorname{diag}
( \frac{\varphi_1}{\theta_1}+\varphi^2_1,\ldots, \frac{\varphi_N}{\theta_N}+\varphi^2_N)$
and $\varphi=(\varphi_1,\ldots, \varphi_N)^\top$.
Applying Schur complement
(see e.g., \cite[Section A.5.5]{boyd2004convex})
to the matrix 
$\begin{pmatrix}
    D & \varphi
    \\
    \varphi^\top & 1
\end{pmatrix}$ yields that $\Psi$ is positive definite if and only if $1-\varphi^\top D^{-1} \varphi>0$, which is equivalent to  $     \sum_{i=1}^N \frac{\varphi_i \theta_i}{1+\varphi_i\theta_i} < 1
$.  

By Lemma  \ref{lemma:MV_rescale}
and 
 Proposition \ref{prop:potential_mv},
any maximizer of $\Phi$ is an NE $\bu^*$ of the game \eqref{eq:MV_state_original}–\eqref{eq:MV_objective_original}.
We now construct a maximizer of $\Phi$ through a  verification argument as in  \cite{guo2023ito}.
To this end, we denote by 
$\sS^N$
the space of $N\times N$ symmetric matrices, 
 by $\cP_2(\sR^N)$
the space of probability measures on $\sR^N$ with second moments, 
and   define    for all $\mu \in \cP_2(\sR^N)$ and $M \in \sS^N$,      
$ \bar {\mu}\coloneqq \int_{\sR^N} x\mu (\d x),
$ and $
\sV(\mu)(M)\coloneqq \int_{\sR^N}x^\top M x\mu (\d x)-
\bar \mu^\top M \bar  \mu
$. 

Define the   function
${V}:[0,T]\times \cP_2(\sR^N) \ra \sR$ 
such that for all $(t,\mu)\in[0,T]\times\cP_2(\sR^N)$,
\begin{align*}
    {V}(t,\mu) \coloneqq  - \frac{1}{2} 
    \sV(\mu)(M(t))  + 
    \beta^\top 
    \bar \mu +    c(t).
\end{align*}
where  $M:[0,T]\to\sS^N$ and $c:[0,T]\to\R$ are solutions to the following linear equations:
\begin{equation}
\label{eq:potential_ode_M}
   \begin{aligned}
     &\dot M(t) -   b_t^\top  \Sigma_t^{-1}   b_t M(t) = 0, \quad t\in [0,T);\quad M(T) = \Psi, \\
       & \dot c(t) +    \frac{1}{2}   b_t^\top \Sigma_t^{-1}   b_t \beta^\top M(t) ^{-1} \beta = 0,  \quad t\in [0,T); \quad  c(T) =0.
\end{aligned}
\end{equation}
where the dot refers to the time derivative. 
Note that for all $t\in [0,T]$,
\begin{equation} 
\label{eq:M_solutioin}
    M(t) = \Psi \exp\left(-\int_t^T  b_s^\top  \Sigma_s^{-1}   b_s \d s\right),
\end{equation}
and   hence is positive definite since $\Psi$ is positive definite.
Moreover, a direct computation yields 
\begin{align}
\label{eq:value_derivative_MV}
\begin{split}
    \partial_t V(t,\mu) &= - \frac{1}{2} 
    \sV(\mu)(\dot M(t))  + 
     \dot c(t),\\
     \delta_\mu V(t,\mu)(x) &=
 -\frac{1}{2} x^\top M(t) x 
 +(\bar{\mu}^\top M(t)+\beta^\top ) x
 + C,\\
     \partial_\mu  {V}(t,\mu)(x) 
    &= - M(t) (x-\bar \mu)+ \beta,
    \\ \partial_{x}\partial_{\mu} {V}(t,\mu)(x) &= -{M}(t),
    \end{split}
\end{align}
where 
$\delta_\mu$ and 
$\partial_\mu$ refer to the linear functional derivative and Lions derivative with respect to the measure component, respectively, and 
$C$ is a constant.

         For each $\bu \in \cH^2(\sR^{d\times N})$, 
       applying It\^o's formula 
(e.g.~\cite[Theorem 2.1]{li2018mean}
and \cite[Corollary 3.5]{guo2023ito})
 to the map $t\mapsto V(t, \sP_{\tilde{\bX}_t^{  \bu} })$ yields 
\begin{align}
\label{eq:ito_potential}
    \begin{split}
  &      V(T,   \sP_{\tilde{\bX}_T^{  \bu} } )-V(0,   \sP_{\tilde {\bX}_0^{  \bu} } )
  \\
& =\int_0^T \bigg(
 \partial_t {V} (t,\sP_{\tilde {\bX}_t^{  \bu}}) +\sE  \bigg[     b_t^\top \bu_t 
   \partial_\mu  {V}(t,\sP_{\tilde {\bX}_t^{  \bu}} )(\tilde {\bX}_t^{  \bu})
   + \frac{1}{2} \Tr \left(\bu_t^\top \sigma_t \sigma_t^\top \bu_t 
\partial_{x}\partial_{\mu} {V}
   (t,\sP_{\tilde {\bX}_t^{  \bu}} )(\tilde {\bX}_t^{  \bu})
 \right)
 \\
 &\quad +  \int_{\sR^p_0} \left(
 \delta_\mu V(t,\sP_{\tilde {\bX}_t^{  \bu}})(\tilde {\bX}_t^{  \bu}+\bu^\top_t\gamma(t,z))
 -\delta_\mu V(t,\sP_{\tilde {\bX}_t^{  \bu}})(\tilde {\bX}_t^{  \bu})
 -\partial_\mu V(t,\sP_{\tilde {\bX}_t^{  \bu}})(\tilde {\bX}_t^{  \bu})^\top \bu^\top_t\gamma(t,z)
 \right)\nu(\d z)
 \bigg]
\bigg)\, \d t
\\
&= 
\int_0^T \bigg(
 \partial_t {V} (t,\sP_{\tilde {\bX}_t^{  \bu}}) +\sE  \bigg[  -   b_t^\top \bu_t \left[ M(t) (\tilde {\bX}_t^{  \bu}  - \sE[\tilde {\bX}_t^{  \bu}]  )-  \beta \right]  -  \frac{1}{2} \Tr \left(\bu_t^\top \sigma_t\sigma_t^\top \bu_t M(t) 
 \right)
 \\
 &\quad -\frac{1}{2} \Tr \left(\bu_t^\top \int_{\sR^p_0}\gamma(t,z)\gamma^\top(t,z) \nu(\d z) \bu_t M(t) 
 \right)
 \bigg]
\bigg)\, \d t,
\end{split}
\end{align}
where the last identity used  \eqref{eq:value_derivative_MV}.
 Since $M(t)$ is positive definite, 
 for each $(t,x,y)\in [0,T]\times \sR^N\times \sR^N$,
the function 
$$
\sR^{d\times N} \ni a \mapsto 
H(t,x,y,a)\coloneqq 
-   b_t^\top a \left[ M(t) (x  - y  )-  \beta\right]  -  \frac{1}{2} \Tr \left(a ^\top \Sigma_t^\top a M(t) \right)\in \sR
$$
is maximized at 
\begin{equation}
\label{eq:mv_policy}
a^*(t,x,y)= -\Sigma^{-1}_t 
 b_t \left[ M(t) (x  - y  )-  \beta \right]^\top(M(t))^{-1},
\end{equation}
which along with \eqref{eq:M_solutioin} yields 
\eqref{eq:MV_control_explicit}.
Moreover,  
$$
\sup_{a\in \sR^{d\times N}}
H(t,x,y,a) = \frac{1}{2}  b_t^\top \Sigma^{-1}_t 
 b_t \left[ M(t) (x  - y  )- \beta \right]^\top(M(t))^{-1}  \left[ M(t) (x  - y  )-  \beta \right].
$$
This along with \eqref{eq:ito_potential}
implies that 
\begin{align*}
&      V(T,   \sP_{\tilde {\bX}_T^{  \bu} } )-V(0,   \sP_{\tilde {\bX}_0^{  \bu} } )
\\
&
      \le 
      \int_0^T \left(
 \partial_t {V} (t,\sP_{\tilde {\bX}_t^{  \bu}}) 
 +
  \frac{1}{2}  b_t^\top \Sigma^{-1}_t 
 b_t 
\sE\left[ 
\left[ M(t) (\tilde {\bX}_t^{  \bu}  - \sE[\tilde {\bX}_t^{  \bu}]  )-  \beta\right]^\top(M(t))^{-1}  \left[ M(t) (\tilde {\bX}_t^{  \bu}  - \sE[\tilde {\bX}_t^{  \bu}]    )-  \beta \right] 
\right]
 \right)
\, \d t
\\
&=
    \int_0^T \left(
 \partial_t {V} (t,\sP_{\tilde {\bX}_t^{  \bu}}) 
 +
  \frac{1}{2}  b_t^\top \Sigma^{-1}_t 
 b_t
\left[ \sV(\sP_{\tilde {\bX}_t^{  \bu}}) (M(t))+  \beta^\top (M(t))^{-1} \beta
\right]
 \right)
\, \d t=0,
\end{align*}
where the last identity used the 
fact that $M$ and $c$ satisfy
\eqref{eq:potential_ode_M}. 
This implies that  
$$
\Phi(\bu ) \le V(0,\delta_0), \quad \forall \bu \in \cH^2(\sR^{d\times N}),
$$
and the equality is achieved at the control 
$\bu^*\in \cA$
by 
$\bu_t^* =a^*(t,  \tilde{\bX}^{*}_t, \sE[\tilde{\bX}^{*}_t])$,
where $\tilde{\bX}^{*}$ satisfies the following dynamics:   
\begin{equation*}
\d \tilde{\bX}_t  = 
a^*(t, \tilde{\bX}_t, \sE[\tilde{\bX}_t]) ^\top \left(  b_t  \, \d t +  \sigma_t \, \d W_t
+\int_{\sR^p_0}\gamma (t,z)\widetilde{\eta}(\d t,\d z)\right),
\quad 
\tilde{\bX}_0=0.
\end{equation*}
Observe that
$\tilde{\bX}^*_t={\bX}^*_t-(x_1,\ldots,x_N)^\top$ for $\bX^*$   satisfying  \eqref{eq:mv_optimal_state},
and $\bu_t^* =a^*(t,  {\bX}^{*}_t, \sE[{\bX}^{*}_t])$. 
This finishes the proof.
\end{proof}

Theorem \ref{thm:potential_maximizer} constructs NEs for portfolio selection games with general heterogeneous preference parameters. The condition for the existence of these equilibria depends only on the interaction weights $(\lambda^V_{ij})_{i,j\in [N]}$ in  the variance and is independent of interactions through the mean. 
In the special case with mean-field dependence $\lambda^V_{ij}=\theta_i/(N-1)$, as analyzed in  \cite{shao2025competitive, huang2025partial}, the condition
$     \sum_{i=1}^N \frac{\varphi_i \theta_i}{1+\varphi_i\theta_i} < 1
$
reduces to
$     \sum_{i=1}^N \frac{  \theta_i}{N-1+ \theta_i} < 1
$, 
which is satisfied if $\theta_i\in (0,1]$ for all $i\in [N]$, with at least one $\theta_i$
strictly less than 1.

  }\section*{Acknowledgments}
XG and YZ  
 are grateful for support from the Imperial Global Connect Fund.
XL is grateful for support from EPSRC grant EP/Y028872/1 (Mathematical Foundations of Intelligence: An Erlangen Programme for AI).
All numerical results are based on simulations conducted by the authors and no external datasets were used.
The authors declare that they have no conflict of interest.
\bibliographystyle{abbrv}
\bibliography{references}

@article{plank2026learning,
  title={Learning Distributed Equilibria in Linear-Quadratic Stochastic Differential Games: An $\alpha $-Potential Approach},
  author={Plank, Philipp and Zhang, Yufei},
  journal={arXiv preprint arXiv:2602.16555},
  year={2026}
}

@article{di2025alpha,
  title={$\alpha $-Potential Games for Decentralized Control of Connected and Automated Vehicles},
  author={Di, Xuan and Hu, Anran and Wang, Zhexin and Zhang, Yufei},
  journal={arXiv preprint arXiv:2512.05712},
  year={2025}
}

@article{huang2025partial,
  title={Partial Information in a Mean-Variance Portfolio Selection Game},
  author={Huang, Yu-Jui and Sun, Li-Hsien},
  journal={Mathematical Finance},
  year={2025},
  publisher={Wiley Online Library}
}

@article{shao2025competitive,
  title={Competitive optimal portfolio selection under mean-variance criterion},
  author={Shao, Guojiang and Xu, Zuo Quan and Zhang, Qi},
  journal={arXiv preprint arXiv:2511.05270},
  year={2025}
}

@article{liu2018optimal,
  title={Optimal inventory control with jump diffusion and nonlinear dynamics in the demand},
  author={Liu, Jingzhen and Cedric Yiu, Ka Fai and Bensoussan, Alain},
  journal={SIAM Journal on Control and Optimization},
  volume={56},
  number={1},
  pages={53--74},
  year={2018},
  publisher={SIAM}
}

@article{musila1991generalized,
  title={Generalized Stein's model for anatomically complex neurons},
  author={Musila, Miroslav and L{\'a}nsk{\`y}, Petr},
  journal={BioSystems},
  volume={25},
  number={3},
  pages={179--191},
  year={1991},
  publisher={Elsevier}
}

@article{sirovich2014cooperative,
  title={Cooperative behavior in a jump diffusion model for a simple network of spiking neurons},
  author={Sirovich, Roberta and Sacerdote, Laura Lea and Villa, Alessandro EP and others},
  journal={Mathematical Biosciences and Engineering},
  volume={11},
  number={2},
  pages={385--401},
  year={2014}
}

@article{han2017deep,
  title={Deep learning-based numerical methods for high-dimensional parabolic partial differential equations and backward stochastic differential equations},
author = {E, Weinan and Han, Jiequn and Jentzen, Arnulf},
  journal={Communications in mathematics and statistics},
  volume={5},
  number={4},
  pages={349--380},
  year={2017},
  publisher={Springer}
}

@article{han2016deep,
  title={Deep learning approximation for stochastic control problems},
  author={Han, Jiequn and E, Weinan},
  journal={arXiv preprint arXiv:1611.07422},
  year={2016}
}

@article{santambrogio2021cucker,
  title={A Cucker--Smale inspired deterministic mean field game with velocity interactions},
  author={Santambrogio, Filippo and Shim, Woojoo},
  journal={SIAM Journal on Control and Optimization},
  volume={59},
  number={6},
  pages={4155--4187},
  year={2021},
  publisher={SIAM}
}

@incollection{tordeux2016collision,
  title={Collision-free speed model for pedestrian dynamics},
  author={Tordeux, Antoine and Chraibi, Mohcine and Seyfried, Armin},
  booktitle={Traffic and Granular Flow'15},
  pages={225--232},
  year={2016},
  publisher={Springer}
}

@article{jakobsen2005continuous,
  title={Continuous dependence estimates for viscosity solutions of integro-PDEs},
  author={Jakobsen, Espen R and Karlsen, Kenneth H},
  journal={Journal of Differential Equations},
  volume={212},
  number={2},
  pages={278--318},
  year={2005},
  publisher={Elsevier}
}

@inproceedings{mazumdar2020policy,
  title={Policy-Gradient Algorithms Have No Guarantees of Convergence in Linear Quadratic Games},
  author={Mazumdar, Eric and Ratliff, Lillian J and Jordan, Michael I and Sastry, S Shankar},
  booktitle={Proceedings of the 19th International Conference on Autonomous Agents and MultiAgent Systems},
  pages={860--868},
  year={2020}
}

@article{giegrich2024convergence,
  title={Convergence of policy gradient methods for finite-horizon exploratory linear-quadratic control problems},
  author={Giegrich, Michael and Reisinger, Christoph and Zhang, Yufei},
  journal={SIAM Journal on Control and Optimization},
  volume={62},
  number={2},
  pages={1060--1092},
  year={2024},
  publisher={SIAM}
}

@article{reisinger2023linear,
  title={Linear convergence of a policy gradient method for some finite horizon continuous time control problems},
  author={Reisinger, Christoph and Stockinger, Wolfgang and Zhang, Yufei},
  journal={SIAM Journal on Control and Optimization},
  volume={61},
  number={6},
  pages={3526--3558},
  year={2023},
  publisher={SIAM}
}

@article{sethi2024entropy,
  title={Entropy annealing for policy mirror descent in continuous time and space},
  author={Sethi, Deven and {\v{S}}i{\v{s}}ka, David and Zhang, Yufei},
  journal={arXiv preprint arXiv:2405.20250},
  year={2024}
}

@article{hu2021deep,
  title={Deep fictitious play for stochastic differential games},
  author={Hu, Ruimeng},
  journal={Communications in Mathematical Sciences},
  volume={19},
  number={2},
  pages={325--353},
  year={2021},
  publisher={International Press of Boston}
}

@misc{kingma2017adammethodstochasticoptimization,
      title={Adam: A Method for Stochastic Optimization}, 
      author={Diederik P. Kingma and Jimmy Ba},
      year={2017},
      eprint={1412.6980},
      archivePrefix={arXiv},
      primaryClass={cs.LG},
      url={https://arxiv.org/abs/1412.6980}, 
}

@article{cont2005finite,
  title={A finite difference scheme for option pricing in jump diffusion and exponential {L}{\'e}vy models},
  author={Cont, Rama and Voltchkova, Ekaterina},
  journal={SIAM Journal on Numerical Analysis},
  volume={43},
  number={4},
  pages={1596--1626},
  year={2005},
  publisher={SIAM}
}

@article{reisinger2021penalty,
  title={A penalty scheme and policy iteration for nonlocal {HJB} variational inequalities with monotone nonlinearities},
  author={Reisinger, Christoph and Zhang, Yufei},
  journal={Computers \& Mathematics with Applications},
  volume={93},
  pages={199--213},
  year={2021},
  publisher={Elsevier}
}

@article{dumitrescu2021approximation,
  title={Approximation schemes for mixed optimal stopping and control problems with nonlinear expectations and jumps},
  author={Dumitrescu, Roxana and Reisinger, Christoph and Zhang, Yufei},
  journal={Applied Mathematics \& Optimization},
  volume={83},
  number={3},
  pages={1387--1429},
  year={2021},
  publisher={Springer}
}

@article{li2018mean,
  title={Mean-field forward and backward {SDEs} with jumps and associated nonlocal quasi-linear integral-{PDEs}},
  author={Li, Juan},
  journal={Stochastic Processes and their Applications},
  volume={128},
  number={9},
  pages={3118--3180},
  year={2018},
  publisher={Elsevier}
}

@Inbook{kunita2004stochastic,
author="Kunita, Hiroshi",
editor="Rao, M. M.",
title="Stochastic Differential Equations Based on L{\'e}vy Processes and Stochastic Flows of Diffeomorphisms",
bookTitle="Real and Stochastic Analysis: New Perspectives",
year="2004",
publisher="Birkh{\"a}user Boston",
address="Boston, MA",
pages="305--373",
abstract="Continuous stochastic differential equations (SDE) based on Brownian motions have been studied a lot. Among them, pathwise properties of the solution such as the continuity, the differentiability and the diffeomorphic properties of the solution with respect to the initial state were studied in detail in the past two decades. Some of these results can be found in the author's book [13].",
isbn="978-1-4612-2054-1",
doi="10.1007/978-1-4612-2054-1_6",
url="https://doi.org/10.1007/978-1-4612-2054-1_6"
}

@article{nourian2011mean,
  title={Mean field analysis of controlled {Cucker-Smale} type flocking: Linear analysis and perturbation equations},
  author={Nourian, Mojtaba and Caines, Peter E and Malham{\'e}, Roland P},
  journal={IFAC Proceedings Volumes},
  volume={44},
  number={1},
  pages={4471--4476},
  year={2011},
  publisher={Elsevier}
}

@article{kavuncu2021potential,
  title={Potential {iLQR}: A potential-minimizing controller for planning multi-agent interactive trajectories},
  author={Kavuncu, Talha and Yaraneri, Ayberk and Mehr, Negar},
  journal={arXiv preprint arXiv:2107.04926},
  year={2021}
}

@article{sun2024imagined,
  title={Imagined Potential Games: A Framework for Simulating, Learning and Evaluating Interactive Behaviors},
  author={Sun, Lingfeng and Wang, Yixiao and Hung, Pin-Yun and Wang, Changhao and Zhang, Xiang and Xu, Zhuo and Tomizuka, Masayoshi},
  journal={arXiv preprint arXiv:2411.03669},
  year={2024}
}

@article{aurell2022stochastic,
  title={Stochastic graphon games: {II}. the linear-quadratic case},
  author={Aurell, Alexander and Carmona, Ren{\'e} and Lauriere, Mathieu},
  journal={Applied Mathematics \& Optimization},
  volume={85},
  number={3},
  pages={39},
  year={2022},
  publisher={Springer}
}

@article{carmona2023synchronization,
  title={Synchronization in a Kuramoto mean field game},
  author={Carmona, Rene and Cormier, Quentin and Soner, H Mete},
  journal={Communications in Partial Differential Equations},
  volume={48},
  number={9},
  pages={1214--1244},
  year={2023},
  publisher={Taylor \& Francis}
}

@article{guo2023ito,
  title={It{\^o}’s formula for flows of measures on semimartingales},
  author={Guo, Xin and Pham, Huy{\^e}n and Wei, Xiaoli},
  journal={Stochastic Processes and their applications},
  volume={159},
  pages={350--390},
  year={2023},
  publisher={Elsevier}
}

@article{guo2025bsde,
  title={BSDE Approach for $\alpha$-Potential Stochastic Differential Games},
  author={Guo, Xin and Li, Xun and Zhang, Liangquan},
  journal={arXiv preprint arXiv:2507.13256},
  year={2025}
}

@article{maheshwari2024convergence,
  title={Convergence of Decentralized Actor-Critic Algorithm in General-sum Markov Games},
  author={Maheshwari, Chinmay and Wu, Manxi and Sastry, Shankar},
  journal={IEEE Control Systems Letters},
  year={2024},
  publisher={IEEE}
}

@article{kalaria2024alpha,
  title={$\alpha$-RACER: Real-Time Algorithm for Game-Theoretic Motion Planning and Control in Autonomous Racing using Near-Potential Function},
  author={Kalaria, Dvij and Maheshwari, Chinmay and Sastry, Shankar},
  journal={arXiv preprint arXiv:2412.08855},
  year={2024}
}

@book{yong2012stochastic,
  title={Stochastic controls: Hamiltonian systems and HJB equations},
  author={Yong, Jiongmin and Zhou, Xun Yu},
  volume={43},
  year={2012},
  publisher={Springer Science \& Business Media}
}

@article{guo2024alpha,
  title={An $\alpha$-Potential Game Framework for N-Player Dynamic Games},
  author={Guo, Xin and Li, Xinyu and Zhang, Yufei},
  journal={SIAM Journal on Control and Optimization},
  volume={63},
  number={4},
  pages={2964--3005},
  year={2025},
  publisher={SIAM}
}

@article{lachapelle2011mean,
  title={On a mean field game approach modeling congestion and aversion in pedestrian crowds},
  author={Lachapelle, Aim{\'e} and Wolfram, Marie-Therese},
  journal={Transportation research part B: methodological},
  volume={45},
  number={10},
  pages={1572--1589},
  year={2011},
  publisher={Elsevier}
}

@article{aurell2018mean,
  title={Mean-field type modeling of nonlocal crowd aversion in pedestrian crowd dynamics},
  author={Aurell, Alexander and Djehiche, Boualem},
  journal={SIAM Journal on Control and Optimization},
  volume={56},
  number={1},
  pages={434--455},
  year={2018},
  publisher={SIAM}
}

@article{jackson2023approximately,
  title={Approximately optimal distributed stochastic controls beyond the mean field setting},
  author={Jackson, Joe and Lacker, Daniel},
  journal={arXiv preprint arXiv:2301.02901},
  year={2023}
}

@inproceedings{aghajani2015formation,
  title={Formation control of multi-vehicle systems using cooperative game theory},
  author={Aghajani, Amin and Doustmohammadi, Ali},
  booktitle={2015 15th International Conference on Control, Automation and Systems (ICCAS)},
  pages={704--709},
  year={2015},
  organization={IEEE}
}

@article{sun2023distributed,
  title={Distributed Multi-agent Interaction Generation with Imagined Potential Games},
  author={Sun, Lingfeng and Hung, Pin-Yun and Wang, Changhao and Tomizuka, Masayoshi and Xu, Zhuo},
  journal={arXiv preprint arXiv:2310.01614},
  year={2023}
}

@article{blum2010routing,
  title={Routing without regret: On convergence to Nash equilibria of regret-minimizing algorithms in routing games},
  author={Blum, Avrim and Even-Dar, Eyal and Ligett, Katrina},
  journal={Theory of Computing},
  volume={6},
  number={1},
  pages={179--199},
  year={2010},
  publisher={Theory of Computing Exchange}
}

@inproceedings{krichene2015convergence,
  title={Convergence of heterogeneous distributed learning in stochastic routing games},
  author={Krichene, Syrine and Krichene, Walid and Dong, Roy and Bayen, Alexandre},
  booktitle={2015 53rd Annual Allerton Conference on Communication, Control, and Computing (Allerton)},
  pages={480--487},
  year={2015},
  organization={IEEE}
}

@inproceedings{colombo2012efficient,
  title={Efficient algorithms for collision avoidance at intersections},
  author={Colombo, Alessandro and Del Vecchio, Domitilla},
  booktitle={Proceedings of the 15th ACM international conference on Hybrid Systems: Computation and Control},
  pages={145--154},
  year={2012}
}

@inproceedings{narasimha2022multi,
  title={Multi-agent learning via markov potential games in marketplaces for distributed energy resources},
  author={Narasimha, Dheeraj and Lee, Kiyeob and Kalathil, Dileep and Shakkottai, Srinivas},
  booktitle={2022 IEEE 61st Conference on Decision and Control (CDC)},
  pages={6350--6357},
  year={2022},
  organization={IEEE}
}

@article{ma2011decentralized,
  title={Decentralized charging control of large populations of plug-in electric vehicles},
  author={Ma, Zhongjing and Callaway, Duncan S and Hiskens, Ian A},
  journal={IEEE Transactions on control systems technology},
  volume={21},
  number={1},
  pages={67--78},
  year={2011},
  publisher={IEEE}
}

@inproceedings{paccagnan2016aggregative,
  title={On aggregative and mean field games with applications to electricity markets},
  author={Paccagnan, Dario and Kamgarpour, Maryam and Lygeros, John},
  booktitle={2016 European Control Conference (ECC)},
  pages={196--201},
  year={2016},
  organization={IEEE}
}

@article{srikantha2016resilient,
  title={Resilient distributed real-time demand response via population games},
  author={Srikantha, Pirathayini and Kundur, Deepa},
  journal={IEEE Transactions on Smart Grid},
  volume={8},
  number={6},
  pages={2532--2543},
  year={2016},
  publisher={IEEE}
}

@inproceedings{ramchurn2011agent,
  title={Agent-Based Control for Decentralized Demand Side Management in the Smart Grid},
    author={Ramchurn, Sarvapali D and Vytelingum, Perukrishnen and Rogers, Alex and Jennings, Nick},
  booktitle={Proc. of the 10th International Joint Conference on Autonomous Agents and Multi-agent Systems, AAMAS 2011},
  pages={5--12},
  year={2011}
}

@article{tushar2020peer,
  title={Peer-to-peer trading in electricity networks: An overview},
  author={Tushar, Wayes and Saha, Tapan Kumar and Yuen, Chau and Smith, David and Poor, H Vincent},
  journal={IEEE transactions on smart grid},
  volume={11},
  number={4},
  pages={3185--3200},
  year={2020},
  publisher={IEEE}
}

@article{lu2025multiagent,
  title={Multiagent Relative Investment Games in a Jump Diffusion Market with Deep Reinforcement Learning Algorithm},
  author={Lu, Liwei and Hu, Ruimeng and Yang, Xu and Zhu, Yi},
  journal={SIAM Journal on Financial Mathematics},
  volume={16},
  number={2},
  pages={707--746},
  year={2025},
  publisher={SIAM}
}

@article{guo2023markov,
  title={Markov $ \alpha $-Potential Games: Equilibrium Approximation and Regret Analysis},
  author={Guo, Xin and Li, Xinyu and Maheshwari, Chinmay and Sastry, Shankar and Wu, Manxi},
  journal={arXiv preprint arXiv:2305.12553},
  year={2023}
}

@article{dumitrescu2017mixed,
  title={Mixed generalized {Dynkin} game and stochastic control in a {Markovian} framework},
  author={Dumitrescu, Roxana and Quenez, Marie-Claire and Sulem, Agn{\`e}s},
  journal={Stochastics},
  volume={89},
  number={1},
  pages={400--429},
  year={2017},
  publisher={Taylor \& Francis}
}

@article{pham1998optimal,
  title={Optimal stopping of controlled jump diffusion processes: a viscosity solution approach},
  author={Pham, Huy{\^e}n},
  journal={J. Math. Syst. Estimat. Control},
  volume={8},
  number={1},
  pages={1},
  year={1998}
}

@book{carmona2016lectures,
  title={Lectures on BSDEs, stochastic control, and stochastic differential games with financial applications},
  author={Carmona, Ren{\'e}},
  year={2016},
  publisher={SIAM}
}

@article{guo2025towards,
  title={Towards an analytical framework for dynamic potential games},
  author={Guo, Xin and Zhang, Yufei},
  journal={SIAM Journal on Control and Optimization},
  volume={63},
  number={2},
  pages={1213--1242},
  year={2025},
  publisher={SIAM}
}

@book{carmona2018probabilistic,
  title={Probabilistic Theory of {Mean Field Games} with Applications I: {Mean Field FBSDEs}, Control, and Games
},
  author={Carmona, Ren{\'e} and Delarue, Fran{\c{c}}ois},
  volume={83},
  year={2018},
  publisher={Springer}
}

@Misc{amsmath,
  author =	 {{American Mathematical Society}},
  title =	 {User's Guide for the \texttt{amsmath} Package
                  (Version 2.0)},
  url =		 {ftp://ftp.ams.org/pub/tex/doc/amsmath/amsldoc.pdf},
  urldate =	 {2015-07-30},
  year =	 2002}

@book{boyd2004convex,
  title={Convex Optimization},
  author={Boyd, Stephen and Boyd, Stephen P and Vandenberghe, Lieven},
  year={2004},
  publisher={Cambridge University Press}
}

 \begin{appendix}
    
\section{Implementation of Algorithm \ref{alg:PG} for Crowd Motion Games} 

\label{sec:implmentation}

To implement Algorithm~\ref{alg:PG}, we uniformly discretize the time interval $[0, 1]$ into $L = 50$ steps. The batch size $M$, representing the number of simulated trajectories per parameter update, is set to 500.

Before stating the configuration details of policy parameterization, we remark that the algorithm’s hyperparameters have not been optimally tuned and hence the following choices may not represent the optimal combination.

We employ a residual feedforward neural network architecture following \cite{lu2025multiagent}, consisting of an input layer, a sequence of residual blocks, and an output layer. Each residual block has the form
$
x \mapsto \varrho(L_1(\varrho(L_2(x)))) + x
$
where $L_1$ and $L_2$ are fully connected layers with matching input and output dimensions, and $\varrho$ denotes the activation function, chosen here to be the standard ReLU.  

Our neural  network policies comprise  four residual blocks, each with width $d + 10$, where $d=4\times 4+1=17$ is the dimensions of 
 the joint state and sensitivity processes, and also  the time variable.
This neural architecture requires no prior knowledge of the solution’s structure. 
Parameters are optimized using the Adam optimizer, with an initial learning rate of $10^{-3}$. A \textsc{ReduceLROnPlateau} scheduler from PyTorch is employed to automatically reduce the learning rate when the validation loss stagnates.
All experiments are run using the fixed random seed 2025.

All
experiments are conducted on a MacBook Pro with 16GB of memory and a Apple M1 Pro chip.

\end{appendix}

\end{document}